\newtheorem{theorem}{Theorem}[section]
\newtheorem{lemma}[theorem]{Lemma}
\newtheorem{corollary}[theorem]{Corollary}
\newtheorem{proposition}[theorem]{Proposition}
\newtheorem{sublemma}{}[theorem]
\theoremstyle{definition}
\theoremstyle{remark}
\numberwithin{equation}{section}
\newcommand{\ba}{\backslash}
\begin{document}

\title[Matroid $N$-connectivity]{A notion of minor-based matroid connectivity}

\author{Zachary Gershkoff}
\address{Mathematics Department\\
Louisiana State University\\
Baton Rouge, Louisiana}
\email{zgersh2@math.lsu.edu}

\author{James Oxley}
\address{Mathematics Department\\
Louisiana State University\\
Baton Rouge, Louisiana}
\email{oxley@math.lsu.edu}

\subjclass{05B35}
\date{\today}

\begin{abstract}
For a matroid $N$, a matroid $M$ is $N$-connected if every two elements of $M$ are in an $N$-minor together. Thus a matroid is connected if and only if it is $U_{1,2}$-connected. This paper proves that $U_{1,2}$ is the only connected matroid $N$ such that if $M$ is $N$-connected with $|E(M)| > |E(N)|$, then $M \backslash e$ or $M / e$ is $N$-connected for all elements $e$. Moreover, we show that $U_{1,2}$ and $M(\mathcal{W}_2)$ are the only  matroids $N$ such that, whenever a matroid has an $N$-minor using $\{e,f\}$ and an $N$-minor using $\{f,g\}$, it also has an $N$-minor using $\{e,g\}$. Finally, we show that $M$ is $U_{0,1} \oplus U_{1,1}$-connected if and only if every clonal class of $M$ is trivial.
\end{abstract}

\keywords{matroid connectivity, matroid roundedness}

\maketitle

\section{Introduction}
\label{Introduction}

Our terminology  follows Oxley \cite{oxl2}. We say that a matroid $M$ {\it uses} an element $e$ or a set $Z$ of elements if $e \in E(M)$ or $Z \subseteq E(M)$. Let $N$ be a matroid. A matroid $M$ with $|E(M)| \geq 2$ is {\it $N$-connected} if, for every pair of distinct elements $e,f$ of $E(M)$, there is a minor of $M$ that is isomorphic to $N$ and uses $\{e,f\}$.

We will assume, unless otherwise stated, that the matroids discussed here have at least two elements. Note that $U_{1,2}$-connectivity coincides with the usual notion of connectivity for matroids. Hence, relying on a well-known inductive property of matroid connectivity \cite{wtt}, we have that if $M$ is $U_{1,2}$-connected, $e \in E(M)$, and $|E(M)| \geq 3$, then $M \ba e$ or $M / e$ is $U_{1,2}$-connected. Our first theorem shows that $U_{1,2}$ is the only connected matroid with this property.

%Note that $U_{1,2}$-connectivity is the ordinary notion of connectivity in that a matroid is connected if for every pair of elements there exists a circuit containing them, and that circuit can be contracted to just the pair of elements. The following theorem states that $U_{1,2}$ is unique in being the only $N$ for which $N$-connectivity is minor-closed, which we prove in Section \ref{3-connected matroids}. 

\begin{theorem}\label{minor_induction}
Let $N$ be a matroid. If, for every $N$-connected matroid $M$ with $|E(M)| > |E(N)|$ and, for every $e$ in $E(M)$, at least one of $M \ba e$ or $M / e$ is $N$-connected, then $N$ is isomorphic to one of $U_{1,2}$, $U_{0,2}$, or $U_{2,2}$.
\end{theorem}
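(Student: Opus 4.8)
The plan is to prove the contrapositive: assuming $N$ is isomorphic to none of $U_{1,2}, U_{0,2}, U_{2,2}$, I will produce an $N$-connected matroid $M$ with $|E(M)| > |E(N)|$ and an element $e$ of $M$ for which neither $M\backslash e$ nor $M/e$ is $N$-connected. Since $M$ is $N$-connected exactly when $M^*$ is $N^*$-connected, and $e$ fails the conclusion for $(M,N)$ exactly when it does for $(M^*,N^*)$, I may argue up to duality. When $|E(N)|=2$ the only matroid not on the list is $U_{0,1}\oplus U_{1,1}$, and here I would take $M=M(K_4)$: for any two edges $f,g$ there is a set $C$ of the remaining edges with $f$ in $\operatorname{cl}(C)$ and $g$ not, so $M$ has a minor on $\{f,g\}$ that is a loop together with a coloop, whence $M$ is $N$-connected; whereas for each edge $e$ the matroid $M\backslash e$ has two edges through a common degree-two vertex lying in no such minor, and $M/e$ has a parallel pair, so neither is $N$-connected.

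Assume now $|E(N)|\ge 3$. The basic mechanism for the rest is that if $M$ is $N$-connected with $|E(M)|=|E(N)|+1$ and $e\in E(M)$ has $M\backslash e\not\cong N$ and $M/e\not\cong N$, then $e$ works, because $M\backslash e$ and $M/e$ each have exactly $|E(N)|$ elements and hence have no $N$-minor at all. First I would handle the non-simple cases. If $N$ has a parallel class $P$ of maximum size at least two and $N$ is not a single parallel class, choose $x\in E(N)\setminus P$ and let $M$ be $N$ with one further element added to $P$; one checks directly that $M$ is $N$-connected (every pair of $M$ omits some member of $P$, whose deletion returns $N$), while $M\backslash x$ and $M/x$ each contain a parallel class larger than any in $N$, so $x$ works. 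Dually this settles every $N$ that fails to be cosimple other than a single series class.

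The single-class exceptions are treated by hand. For $N=U_{1,n}$ with $n\ge3$, take $M$ to be the rank-$2$ matroid with parallel classes of sizes $n-1,\,n-1,\,1$; it is $N$-connected, and a point $e$ of a large class works since $M/e$ has a loop, while in $M\backslash e$ the pair formed by the small-class point and a large-class point lies in no $U_{1,n}$-minor. For $N=U_{0,k}$ with $k\ge3$, take $M=U_{1,3}\oplus U_{0,k-2}$: it is $N$-connected, and a loop $e$ of the $U_{0,k-2}$ summand works because $M\backslash e=M/e=U_{1,3}\oplus U_{0,k-3}\not\cong U_{0,k}$ has exactly $|E(N)|$ elements. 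The duals $U_{n-1,n}$ and $U_{k,k}$ follow by duality. Finally, if $N$ has a single isolated loop, write $N=N'\oplus U_{0,1}$ with $N'$ loopless; given any $N'$-connected matroid $M'$ with more elements than $N'$, the matroid $M'\oplus U_{0,1}$ is $N$-connected and the extra loop works. This (with its dual for a single isolated coloop, recursing downward and bottoming out at $N'=U_{1,2}$, which is handled directly by $M'=U_{1,3}$, giving $M=U_{1,3}\oplus U_{0,1}$) reduces everything to the case that $N$ is simple and cosimple.

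So the remaining case is $N$ simple and cosimple with $|E(N)|\ge 3$, and this is where the real work lies. First, any $N$-connected matroid $M$ is then itself simple and cosimple, since a parallel pair cannot live inside an $N$-minor when $N$ has none, and dually for series pairs. The key point is that if $e$ lies in a triangle of $M$ then $M/e$ acquires a parallel pair and is not $N$-connected, and if $e$ lies in a triad then $M\backslash e$ acquires a series pair and is not $N$-connected; hence any element of an $N$-connected matroid larger than $N$ that lies simultaneously in a triangle and a triad works, and more generally an element in a triangle works as soon as its deletion independently destroys $N$-connectivity. For $N=U_{2,n}$ with $n\ge4$ I would add one point to $U_{2,n}$ on a line through two of its points, obtaining a rank-$3$ matroid on $n+1$ points that is $U_{2,n}$-connected and in which a point $e$ on the new $3$-point line works: $M/e$ has a parallel pair, and $M\backslash e$ has rank $3$ on $|E(N)|$ elements so cannot be $U_{2,n}$; the dual handles $U_{n-2,n}$. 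The main obstacle I anticipate is the ``sparse'' simple-and-cosimple cases, typified by $N=U_{3,6}$ (and $U_{3,7},U_{4,7},\dots$, and more general simple and cosimple matroids having no small circuit or cocircuit), where $N$ has neither a triangle nor a triad, where one cannot stay at $|E(M)|=|E(N)|+1$, and where one must instead build a genuinely larger $N$-connected matroid — a suitably generic point configuration, or a whirl-like matroid — carrying an element that lies in both a triangle and a triad, and then verify $N$-connectedness for it pair by pair. Producing and checking that matroid in these cases is the step I expect to be the crux.
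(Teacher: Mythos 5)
Your contrapositive strategy is legitimate, but as written the proof has a genuine gap at exactly the point that carries the weight of the theorem: the case where $N$ is simple and cosimple with $|E(N)|\ge 3$. You acknowledge this yourself ("the crux"), offering only the hope of "a suitably generic point configuration, or a whirl-like matroid" containing an element in both a triangle and a triad, with $N$-connectivity to be "verified pair by pair." No such construction is produced or verified, and this case includes essentially all the interesting matroids the theorem must exclude ($M(K_4)$, $F_7$, $U_{3,6}$, disconnected examples like $U_{2,3}\oplus U_{2,3}$, and so on), so the argument is incomplete where it matters most. Even the one simple-and-cosimple family you do treat explicitly has a flaw: for $N=U_{2,4}$ your $M$ is five points in rank $3$ with a single $3$-point line, and the pair consisting of the two points off that line lies in no $U_{2,4}$-minor (such a minor must be $M/z$ for a single point $z$ on the line, which creates a parallel pair), so that $M$ is not $U_{2,4}$-connected; the construction only works for $n\ge 5$. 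There are smaller holes as well: your case analysis for non-simple $N$ misses matroids with two or more loops together with non-loop elements but no parallel pairs (e.g.\ $U_{0,2}\oplus U_{2,3}$), and in the isolated-loop reduction the claim that for \emph{any} $N'$-connected $M'$ the loop of $M'\oplus U_{0,1}$ works is false as stated, since a loopless matroid can perfectly well be $N$-connected when $N$ has a loop (the paper's own example: $M(K_4)$ is $U_{0,1}\oplus U_{1,1}$-connected); you would need $|E(M')|=|E(N')|+1$ together with an existence argument for such an $M'$.

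It is worth noting why the paper does not face your "crux." Rather than exhibiting, for each bad $N$, a single witness $(M,e)$ with both $M\ba e$ and $M/e$ failing, the paper argues by contradiction and uses the hypothesis \emph{iteratively}: starting from a convenient large $N$-connected matroid built by parallel connection or $2$-sum of copies of an $N$-connected matroid, it repeatedly removes elements while (by hypothesis) preserving $N$-connectivity, and then contradicts the structure forced on the resulting small matroid -- a triangle containing a series pair when a connected $N$ is simple, a $U_{1,n+1}$ when a connected $N$ is non-uniform (via $M(\mathcal W_2)$-connectivity), and a component-count mismatch when $N$ is disconnected. That device converts the hard existence problem your approach runs into (constructing and checking an explicit $N$-connected counterexample for every simple, cosimple $N$) into a soft structural argument, and it is precisely the missing ingredient in your proposal.
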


One attractive property of matroid connectivity is that elements can be assigned to components. % such that if $e$ is in the same component as $f$, and $f$ is in the same component as $g$, then $e$ is in the same component as $g$. 
We say that a matroid $N$ has the {\it transitivity property} if, for every matroid $M$ and every triple $\{e,f,g\} \subseteq E(M)$, if $e$ is in an $N$-minor with $f$, and $f$ is in an $N$-minor with $g$, then $e$ is in an $N$-minor with $g$.  Let $M(\mathcal{W}_2)$  be the rank-$2$ wheel. In Section \ref{components}, we prove the following result. 

\begin{theorem}\label{transitivity3}
The only   matroids with the transitivity property are $U_{1,2}$ and $M(\mathcal{W}_2)$.
\end{theorem}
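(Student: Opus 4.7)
The plan is to prove the theorem in two directions: (i) verify that both $U_{1,2}$ and $M(\mathcal{W}_2)$ have the transitivity property, and (ii) show that no other matroid does. The $U_{1,2}$ case of (i) is immediate from the classical fact that ``lying in a common circuit'' is an equivalence relation on $E(M)$ whose classes are the connected components of $M$.

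For the $M(\mathcal{W}_2)$ case, the first step is to characterise when $M$ has an $M(\mathcal{W}_2)$-minor using a given pair $\{e,f\}$. Since $M(\mathcal{W}_2)$ is the unique rank-$2$ matroid on $4$ elements with exactly one nontrivial parallel class (of size $2$), this amounts to finding disjoint subsets $X, Y$ of $E(M) \setminus \{e,f\}$ and two further elements $h, h'$ so that $M/X \setminus Y$ restricted to $\{e,f,h,h'\}$ realises that configuration. Given witnesses for $\{e,f\}$ and $\{f,g\}$, I would combine them by a subcase analysis on the role of $f$ in each witness (member of the parallel pair versus one of the two ``spoke'' elements) to produce a witness for $\{e,g\}$.

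For the converse direction (ii), let $N$ be a matroid with $N \not\cong U_{1,2}$ and $N \not\cong M(\mathcal{W}_2)$. My plan is a case analysis on the structure of $N$. If $N$ has a loop or coloop, or is otherwise disconnected, the rigid behaviour of such elements under minors should enable an explicit counterexample $M$ with three distinguished elements $e, f, g$ witnessing failure of transitivity. If $N$ is connected with $|E(N)| = 3$, then $N \in \{U_{1,3}, U_{2,3}\}$, and each admits an explicit small counterexample. If $N$ is connected with $|E(N)| \geq 4$ and $N \not\cong M(\mathcal{W}_2)$, I would construct $M$ by gluing two or three copies of $N$ at designated shared elements so that $\{e,f\}$ and $\{f,g\}$ each sit entirely inside a single copy of $N$, and argue that no minor of $M$ crossing the glue is isomorphic to $N$.

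The main obstacle is this last case, since it is an infinite family that must be handled uniformly. A successful construction requires identifying a structural feature that distinguishes $M(\mathcal{W}_2)$ from every other connected matroid with at least $4$ elements, and exploiting that feature to build a counterexample in each subfamily. I expect identifying the right such feature, together with the right ambient $M$ for each subfamily, to be the crux of the proof.
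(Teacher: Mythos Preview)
Your proposal is a plan rather than a proof, and the plan has a genuine gap at exactly the point you flag as ``the crux.'' The suggestion to handle a general connected $N$ with $|E(N)|\ge 4$ by \emph{gluing copies of $N$} and arguing that no $N$-minor crosses the glue does not work as stated. For example, take $N=U_{2,4}$: the $2$-sum of two copies of $U_{2,4}$ does have $U_{2,4}$-minors using one element from each side (contract an element on one side and delete its partner), so your intended $\{e,g\}$ pair \emph{is} in an $N$-minor. The counterexamples that actually witness failure of transitivity for $U_{2,4}$ are of a different shape entirely (e.g.\ a $2$-sum of $U_{2,5}$ with a $3$-connected \emph{binary} matroid such as $F_7$, taking $e,g$ in the binary piece). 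So the ``right ambient $M$'' is not built from copies of $N$ in any uniform way, and you have not identified the structural feature that would let you manufacture a counterexample for every $N$ simultaneously. The disconnected case in your outline is similarly only a hope, not an argument.

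The paper avoids this difficulty by turning the problem inside out. Rather than building a large $M$ and proving a \emph{non}-existence of $N$-minors, it takes $M$ to be a one-element parallel (or series) extension $N'$ of $N$ itself. Transitivity then forces $N'$ to have an $N$-minor using the new $2$-circuit, and since $|E(N')|=|E(N)|+1$ that minor is $N'\backslash g$ for a single element $g$, which must itself lie in a $2$-circuit of $N$. Iterating this with alternating parallel and series extensions produces longer and longer ``special fans'' in the largest component $N_0$ of $N$, and a short fan argument (Lemma~\ref{special_fan}) pins $N_0$ down to $U_{1,2}$ or $M(\mathcal{W}_2)$. Separate sublemmas then rule out any other components. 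This single-element-extension trick is the key idea you are missing; it replaces an unbounded family of non-existence arguments by a cascade of \emph{existence} statements that constrain $N$ directly. For the forward direction, note also that the paper's Theorem~\ref{2wheel} gives a clean criterion (two elements share an $M(\mathcal{W}_2)$-minor iff they lie in a common connected non-uniform component), from which transitivity is immediate; your proposed witness-by-witness case analysis is unnecessary.
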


On combining the last two theorems, we get the following result, which indicates how special the usual matroid connectivity is.

\begin{corollary}
\label{combine}
Let $N$ be a matroid with the transitivity property such that whenever $M$ is an $N$-connected matroid, $e \in E(M)$, and $|E(M)| > |E(N)|$, at least one of $M\ba e$ and $M/e$ is $N$-connected. Then $N \cong U_{1,2}$.
\end{corollary}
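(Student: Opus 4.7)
The proof will be a direct combination of Theorems \ref{minor_induction} and \ref{transitivity3}, so the plan is essentially to intersect the two characterizations.

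First, I would invoke Theorem \ref{minor_induction}. The hypothesis of the corollary asserts that $N$ has the inductive minor-deletion/contraction property required there, so Theorem \ref{minor_induction} immediately yields that $N$ is isomorphic to one of $U_{1,2}$, $U_{0,2}$, or $U_{2,2}$.

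Next, I would invoke Theorem \ref{transitivity3}. Since $N$ is assumed to have the transitivity property, Theorem \ref{transitivity3} tells us that $N$ is isomorphic to either $U_{1,2}$ or $M(\mathcal{W}_2)$. Taking the intersection of the two candidate lists $\{U_{1,2},U_{0,2},U_{2,2}\}$ and $\{U_{1,2}, M(\mathcal{W}_2)\}$ leaves only $U_{1,2}$, giving the conclusion.

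There is essentially no obstacle here — the work has already been done in the two theorems, and the corollary is a two-line deduction. The only thing to double-check is that I am not implicitly relying on $U_{0,2}$ or $U_{2,2}$ satisfying the transitivity property (which Theorem \ref{transitivity3} explicitly rules out) and, conversely, that $M(\mathcal{W}_2)$ does not satisfy the inductive minor hypothesis (which Theorem \ref{minor_induction} rules out). Since the intersection argument only uses the two classification statements as black boxes, no further verification is needed.
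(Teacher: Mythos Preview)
Your proposal is correct and is essentially the approach the paper takes. The only cosmetic difference is that, after invoking Theorem~\ref{minor_induction} to reduce to $N\in\{U_{1,2},U_{0,2},U_{2,2}\}$, the paper does not cite Theorem~\ref{transitivity3} as a black box but instead uses duality to reduce to $N\cong U_{2,2}$ and then exhibits the explicit matroid $U_{1,2}\oplus U_{1,1}$ to show that $U_{2,2}$ fails the transitivity property; your intersection argument is an equally valid (and arguably cleaner) way to finish.
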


The concept of $N$-connectivity can also convey interesting information when $N$ is disconnected, as the next result indicates.

\begin{theorem}\label{loop and coloop}
A matroid $M$ is $U_{0,1} \oplus U_{1,1}$-connected if and only if every clonal class of $M$ is trivial.
\end{theorem}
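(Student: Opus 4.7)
My plan is to translate both directions into the standard rank characterization of clones: two elements $e, f \in E(M)$ are clones exactly when $r_M(X \cup \{e\}) = r_M(X \cup \{f\})$ for every $X \subseteq E(M) \setminus \{e, f\}$. This follows at once from the definition, since the transposition $(e\ f)$ fixes every subset meeting $\{e, f\}$ in zero or two elements and rank is preserved by matroid automorphisms. The theorem thus reduces to showing that $M$ is $U_{0,1} \oplus U_{1,1}$-connected if and only if no two distinct elements of $M$ are clones.

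For the forward direction, assume $M$ is $U_{0,1} \oplus U_{1,1}$-connected and fix distinct $e, f \in E(M)$. Choose a partition $C \cup D$ of $E(M) \setminus \{e, f\}$ with $M/C \ba D \cong U_{0,1} \oplus U_{1,1}$, and, after possibly relabelling, suppose $e$ is the coloop and $f$ the loop of this minor. Then $r_M(C \cup \{e\}) - r_M(C) = 1$ while $r_M(C \cup \{f\}) - r_M(C) = 0$, so the rank condition fails at $X = C$, and hence $\{e, f\}$ is not a clonal pair.

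For the converse, suppose every clonal class of $M$ is trivial and fix distinct $e, f \in E(M)$. Since $e$ and $f$ are not clones, choose $X \subseteq E(M) \setminus \{e, f\}$ with $r(X \cup \{e\}) \neq r(X \cup \{f\})$; after swapping the roles of $e$ and $f$ if necessary, we have $r(X \cup \{e\}) = r(X) + 1$ and $r(X \cup \{f\}) = r(X)$. Submodularity applied to $X \cup \{e\}$ and $X \cup \{f\}$ then forces $r(X \cup \{e, f\}) = r(X) + 1$. Setting $Y = E(M) \setminus (X \cup \{e, f\})$, the minor $M / X \ba Y$ is a two-element matroid of rank $1$ in which $f$ is a loop and $e$ is a non-loop; it is therefore isomorphic to $U_{0,1} \oplus U_{1,1}$ and witnesses the required connectivity for the pair $\{e, f\}$.

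The only technical input is the clone-rank equivalence, which is standard; given that, the argument amounts to the single observation that a rank-separating set $X$ serves as the contraction set while deleting the remaining elements preserves the separation. No structural case analysis arises, in marked contrast to Theorems \ref{minor_induction} and \ref{transitivity3}.
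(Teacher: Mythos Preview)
Your proof is correct.  Both your argument and the paper's rest on the same core idea --- a set that distinguishes $e$ from $f$ is precisely the contraction set needed to manufacture the $U_{0,1}\oplus U_{1,1}$-minor --- but you invoke a different characterization of clones.  The paper uses the fact (attributed to Bonin--Schmitt~\cite{bs}) that two elements are clones exactly when they lie in the same cyclic flats: given a cyclic flat $F$ with $x\in F$ and $y\notin F$, contracting $F-x$ makes $x$ a loop while leaving $y$ a non-loop.  You instead use the rank criterion $r(X\cup\{e\})=r(X\cup\{f\})$ for all $X\subseteq E(M)\setminus\{e,f\}$, and any $X$ violating it serves directly as the contraction set.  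Your route is slightly more self-contained, since the rank criterion is immediate from the definition of a clone whereas the cyclic-flat criterion requires a short auxiliary argument; the paper's version, on the other hand, names a more structured witnessing set.
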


The paper is structured as follows. In the next section, we recall Cunningham and Edmonds's decomposition theorem for connected matroids that are not $3$-connected, which is a basic tool in our proofs. Sections \ref{3-connected matroids}, \ref{connected matroids}, and \ref{disconnected} treat the cases of $N$-connected matroids when $N$ is $3$-connected, connected, and disconnected, respectively. In particular, we prove Theorems \ref{minor_induction}, and \ref{transitivity3} in Section  \ref{components} and Theorem~\ref{loop and coloop} in Section~\ref{disconnected}. 
%In Section \ref{components}, we prove that some attractive properties of connectivity only hold for $N$-connectivity when $N \cong U_{1,2}$. 
Finally, in Section \ref{larger}, we consider what can be said when every set of three elements occurs in some minor. 
Moss \cite{tm1} showed that $3$-connected matroids can be characterized as those in which every set of four elements is contained in a minor isomorphic to a member of $\{\mathcal{W}^2, \mathcal{W}^3, \mathcal{W}^4, M(\mathcal{W}_3), M(\mathcal{W}_4), Q_6\}$.

\section{Preliminaries}
\label{preliminaries}

The concept of $N$-connectivity is closely related to roundedness, which is exemplified by Bixby's \cite{bix} result that if $e$ is an element of a $2$-connected non-binary matroid $M$, then $M$ has a $U_{2,4}$-minor using $e$. Formally, let $t$ be a positive integer and let $\mathcal{N}$ be a class of matroids. A matroid $M$ {\it has an $\mathcal{N}$-minor} if $M$ has a minor isomorphic to a member of $\mathcal{N}$. Seymour \cite{sey2} defined  $\mathcal{N}$ to be {\it $t$-rounded} if, for every $(t+1)$-connected matroid $M$ with an $\mathcal{N}$-minor and every subset $X$ of $E(M)$ with at most $t$ elements, $M$ has an $\mathcal{N}$-minor using $X$. 
Thus Bixby's result shows that $\{U_{2,4}\}$ is $1$-rounded. Seymour \cite{sey} extended this result as follows.
%If $\mathcal{N} = \{U_{2,4}\}$, Bixby \cite{bix} showed that $\mathcal{N}$ is $1$-rounded, and Seymour \cite{sey} showed that $\mathcal{N}$ is $2$-rounded. We use Seymour's theorem in our characterization of $U_{2,4}$-connectivity.

\begin{theorem}\label{nonbinary 2 elts}
Let $M$ be a $3$-connected matroid having a $U_{2,4}$-minor, and let $e$ and $f$ be  elements of $M$. Then $M$ has a $U_{2,4}$-minor using $\{e,f\}$.
\end{theorem}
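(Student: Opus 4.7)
The plan is induction on $|E(M)|$. The base case $|E(M)|=4$ forces $M\cong U_{2,4}$, and the statement is immediate. For the inductive step, suppose $|E(M)|\ge 5$ and the result holds for every smaller $3$-connected matroid with a $U_{2,4}$-minor. By Bixby's $1$-roundedness theorem, $M$ has a $U_{2,4}$-minor $N_0$ using $e$. If $f\in E(N_0)$, we are done, so assume $f\notin E(N_0)$; and since $U_{2,4}$ is self-dual and $3$-connectivity is preserved under duality, we may assume that $f$ is contracted rather than deleted in the reduction from $M$ to $N_0$.

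Next, the goal is to produce an element $g\in E(M)\setminus\{e,f\}$ such that $M\ba g$ or $M/g$ is $3$-connected and still has a $U_{2,4}$-minor; if such $g$ exists, the inductive hypothesis applied to that smaller matroid yields a $U_{2,4}$-minor using $\{e,f\}$, which is also a minor of $M$. The natural tool is Seymour's splitter theorem with seed $N_0\cong\mathcal{W}^2$: it delivers such a $g\in E(M)\setminus E(N_0)$ unless $M$ itself is a whirl. Because $e\in E(N_0)$, we automatically have $g\neq e$. To arrange $g\ne f$, we change the seed if forced: Bixby's theorem applied to $f$ yields a different $U_{2,4}$-minor containing $f$, and combining the splitter-removable elements obtained from the two seeds gives enough candidates to avoid both $e$ and $f$ whenever $|E(M)|\ge 5$.

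It remains to handle the whirl exception, i.e., $M\cong\mathcal{W}^n$ for some $n\ge 2$. A direct argument should work here: the spoke-rim partition and the cyclic symmetry of $\mathcal{W}^n$ classify the pair $\{e,f\}$ (spoke-spoke, spoke-rim, or rim-rim, together with a cyclic distance) up to automorphism, and in each class one can write down explicit contractions and deletions producing a $\mathcal{W}^2=U_{2,4}$-restriction using $\{e,f\}$. The main obstacle I expect is the avoidance step in the second paragraph, ensuring the splitter-removable element is distinct from $f$; this is a familiar subtlety in roundedness proofs, and the remedy is to juggle several $U_{2,4}$-seeds obtained from different applications of Bixby's theorem until the pool of removable candidates widens enough. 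The whirl case, while case-heavy, is routine combinatorics on a highly symmetric matroid.
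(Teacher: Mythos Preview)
The paper does not supply a proof of this theorem; it is quoted in the preliminaries as a result of Seymour~\cite{sey} and used as a tool thereafter, so there is no proof in the paper to compare your attempt against.

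Evaluating your plan on its own merits: the induction-via-splitter strategy is natural, but the avoidance step you yourself flag is a genuine gap rather than a routine bookkeeping detail. From the seed $N_0\ni e$, the splitter theorem (with $N_0\cong\mathcal{W}^2$, excluding whirls) produces some removable $g_0\in E(M)\setminus E(N_0)$, hence $g_0\ne e$; from a second seed $N_1\ni f$ it produces $g_1\in E(M)\setminus E(N_1)$, hence $g_1\ne f$. Nothing in this rules out $g_0=f$ and $g_1=e$ simultaneously. When $|E(M)|=5$ this is in fact forced, since each complement $E(M)\setminus E(N_i)$ is a singleton; and for larger $M$ the splitter theorem only promises \emph{one} removable element off the seed, which may well be the one you are trying to avoid. ``Juggling several seeds'' does not obviously widen the pool: every seed you can manufacture via Bixby contains one of $e,f$, and the single removable element it yields may be the other. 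To close this you would need either a strengthened splitter statement guaranteeing two removable elements outside the seed (not true in general), or a direct structural analysis of the configuration in which the only removable element off $N_0$ is $f$; that analysis is essentially where the real content of Seymour's argument lies. The whirl case is indeed routine, but that is not where the difficulty sits.
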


The connectivity function $\lambda_M$ of a matroid $M$ is defined for every subset $X$ of $E(M)$ by $\lambda_M(X) = r(X) + r(E(M) - X) - r(M)$; 
equivalently, 
$\lambda_M(X) = r(X) + r^*(X) - |X|.$
%A pair $(X, E(M)-X)$ is a {\it $k$-separation} if $\lambda_M(X) < k$ and $\min\{ |X|, |E(M)-X| \} \geq k$. A matroid is $3$-connected if it has no $1$- or $2$-separations. It is easily deduced (for instance, in \cite{sey}) that if $N$ is a minor of $M$ and $X \in E(N)$, then $\lambda_N(X) \leq \lambda_M(X)$. 
%Moreover, if $\lambda_M(X) = 0$, then $X$ is a union of components of $M$. 
%A partition $(X, Y)$ of $E(M)$ is a {\it $k$-separation} of $M$ with {\it sides} $X$ and $Y$ if $\lambda_M(X) < k$ and $\min\{ |X|, |Y| \} \geq k$. A matroid is $3$-connected if it has no $1$- or $2$-separations. 
For disjoint subsets $A,B$ of $E(M)$, define $\kappa_M(A,B) = \min \{ \lambda_M(X) : A \subseteq X \subseteq E(M) - B\}$.

\begin{lemma}\label{connectivity_function}
If $N$ is a minor of $M$ and $A,B$ are disjoint subsets of $E(N)$, then $\kappa_N(A,B) \leq \kappa_M(A,B)$.
\end{lemma}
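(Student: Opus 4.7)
The plan is to prove the inequality by induction on $|E(M) - E(N)|$, which reduces the problem to the case where $N$ is obtained from $M$ by a single deletion or contraction. Because $\lambda_M(Z) = \lambda_{M^*}(Z)$ for every $Z \subseteq E(M)$ and because contraction in $M$ is deletion in $M^*$, the argument further reduces to treating only the single-deletion case $N = M \backslash e$.

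Assuming $N = M \backslash e$, let $Y$ be a minimizer realizing $\lambda_M(Y) = \kappa_M(A,B)$, so $A \subseteq Y \subseteq E(M) - B$. Set $X = Y - \{e\}$; then $A \subseteq X \subseteq E(N) - B$, so $X$ is feasible in the definition of $\kappa_N(A,B)$, and it suffices to show $\lambda_N(X) \leq \lambda_M(Y)$. Using that $r_N$ agrees with $r_M$ on subsets of $E(M) - \{e\}$ and that $r(N) = r_M(E(M) - \{e\})$, the expansion of $\lambda_M(Y) - \lambda_N(X)$ collapses to one of two rank-increment inequalities. If $e \notin Y$, one needs
\[
r_M(E(M) - Y) - r_M(E(M) - Y - \{e\}) \;\geq\; r_M(E(M)) - r_M(E(M) - \{e\}),
\]
while if $e \in Y$, one needs
\[
r_M(X \cup \{e\}) - r_M(X) \;\geq\; r_M(E(M)) - r_M(E(M) - \{e\}).
\]
In each case the smaller of the two ambient sets on the left is contained in $E(M) - \{e\}$, so the inequality is a direct instance of the submodularity of the rank function: the marginal contribution of adjoining $e$ to a set is non-increasing in that set.

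The argument presents no genuine obstacle; it is primarily a bookkeeping exercise. The only point calling for a little care is the case split on whether $e \in Y$, since the cancellations in the expansion of $\lambda_M(Y) - \lambda_N(X)$ look slightly different in the two cases. Once the submodular inequality has been invoked, one concludes $\kappa_N(A,B) \leq \lambda_N(X) \leq \lambda_M(Y) = \kappa_M(A,B)$, and iterating this single-step bound through the sequence of elementary deletions and contractions taking $M$ to $N$ completes the induction.
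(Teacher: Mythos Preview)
The paper states this lemma without proof, treating it as a well-known fact, so there is no argument to compare against. Your proof is correct: the duality reduction to a single deletion is valid because $\lambda_{M^*}=\lambda_M$ and $(M/e)^*=M^*\backslash e$, and in each of your two cases the needed inequality is exactly the diminishing-marginals form of rank submodularity, applied with the smaller set contained in $E(M)-\{e\}$.

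One minor observation: your Case~1 computation (with $Y=X$) already establishes the pointwise bound $\lambda_{M\backslash e}(X)\le\lambda_M(X)$ for every $X\subseteq E(M\backslash e)$, and this alone suffices, since any $Y$ realizing $\kappa_M(A,B)$ has $X=Y-\{e\}$ feasible for $\kappa_N$ with $\lambda_N(X)\le\lambda_M(X)\le\lambda_M(Y)$. So the case split on whether $e\in Y$ is not strictly necessary, though it does no harm.
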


%In particular, Lemma~\ref{connectivity_function} tells us that if $A$ and $B$ are on opposite sides of a $k$-separation of $M$, then they are on opposite sides of a $k$-separation of $N$.

Next we give a brief outline of Cunningham and Edmonds's   decomposition  \cite{cthesis}  of matroids that are $2$-connected but not $3$-connected.  More complete details can be found in \cite[Section 8.3]{oxl2}. First recall that when $(X,Y)$ is a $2$-separation of a connected matroid $M$, we can write $M$ as $M_X \oplus_2 M_Y$ where $M_X$ and $M_Y$ have ground sets $X \cup p$ and $Y \cup p$.  A {\it matroid-labeled tree} is a tree $T$ with vertex set $\{ M_1, M_2, \ldots, M_n \}$ such that each $M_i$ is a matroid and, for distinct vertices $M_j$ and $M_k$, the sets $E(M_j)$ and $E(M_k)$ are disjoint if $M_j$ and $M_k$ are non-adjacent, whereas if $M_j$ and $M_k$ are joined by an edge $e$, then $E(M_j) \cap E(M_k) = \{e\}$, and $\{e\}$ is not a separator in either $M_j$ or $M_k$.

When $f$ is an edge of a matroid-labeled tree $T$ joining vertices $M_i$ and $M_j$, if we contract the edge $f$, we obtain a new matroid-labeled tree $T/ f$ by relabeling the composite vertex that results from this contraction as $M_i \oplus_2 M_j$, with every other vertex retaining its original label.

A {\it tree decomposition} of a $2$-connected matroid $M$ is a matroid-labeled tree $T$ such that if $V(T) = \{M_1, M_2, \ldots, M_n \}$ and $E(T) = \{ e_1, e_2, \ldots, e_{n-1} \}$, then

\begin{enumerate}[label=(\roman*)]
\item $E(M) = (E(M_1) \cup E(M_2) \cup \cdots \cup E(M_n)) - \{e_1, e_2, \ldots, e_{n-1} \}$;
\item $|E(M_i)| \geq 3$ for all $i$ unless $|E(M)| < 3$, in which case, $n = 1$ and $M_1 = M$; and
\item the label of the single vertex of $T / \{ e_1, e_2, \ldots, e_{n-1} \}$ is $M$.
\end{enumerate}

We call the members of $\{ e_1, e_2, \ldots, e_{n-1} \}$ {\it basepoints} since each member of this set is the basepoint of a $2$-sum when we construct $M$. Cunningham and Edmonds  (in \cite{cthesis}) proved the following (see also \cite[Theorem~8.3.10]{oxl2}).

%When $N$ is $3$-connected and $|E(N)| \geq 4$, characterizations of $N$-connectivity can be given in terms of its {\it canonical tree decomposition}. This result by Cunningham and Edmonds \cite{ce} allows us to decompose a matroid into $3$-connected parts, circuits, or cocircuits that are joined by $2$-sums.

\begin{theorem}
\label{cet}
Let $M$ be a $2$-connected matroid. Then $M$ has a tree decomposition $T$ in which every vertex label that is not a circuit or a cocircuit is $3$-connected, and there are no adjacent vertices that are both labeled by circuits or are both labeled by cocircuits. Moreover, $T$ is unique up to relabeling of its edges.
\end{theorem}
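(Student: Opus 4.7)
The plan is to prove existence constructively by induction on $|E(M)|$ and then establish uniqueness via a careful analysis of the 2-separations displayed by the tree.

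For existence, the base case handles any $M$ that is 3-connected, a circuit, a cocircuit, or has at most two elements; in each instance the single-vertex tree labeled $M$ satisfies (i)--(iii). Otherwise $M$ has a non-trivial 2-separation $(X, Y)$ with $|X|, |Y| \geq 2$, so we may write $M = M_X \oplus_2 M_Y$ at a new basepoint $p$, with $M_X$ and $M_Y$ both 2-connected on strictly fewer elements. Apply the inductive hypothesis to obtain tree decompositions $T_X$ and $T_Y$, and glue them along the vertices containing $p$ by a new edge labeled $p$. Conditions (i) and (iii) are immediate from this construction. To enforce the ``no adjacent circuits or adjacent cocircuits'' requirement, normalize by contracting any edge joining two circuit vertices (replacing them by their $2$-sum, which is again a circuit on the symmetric difference of their ground sets minus the basepoint) and similarly for cocircuits. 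Condition (ii) is maintained by always choosing $|X|, |Y| \geq 2$ at each splitting step.

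Uniqueness is the substantive half and is where the main obstacle lies. Each edge $e$ of a tree decomposition $T$ \emph{displays} a 2-separation of $M$: removing $e$ splits $T$ into two subtrees whose element-sets (ignoring basepoints) partition $E(M)$. The key claim is that the 2-separations displayed by any tree decomposition satisfying the theorem's conditions form a canonical invariant of $M$. Without the normalization rule on circuits and cocircuits, a single long circuit could be cut at many different basepoints, producing non-isomorphic decomposition trees; the normalization rule is exactly what kills this ambiguity. To show the displayed family is canonical one uses submodularity of the connectivity function $\lambda_M$, together with Lemma~\ref{connectivity_function}, to prove that any two crossing 2-separations can be uncrossed, and that when both come from the same tree decomposition satisfying (i)--(iii) and the normalization rule, the uncrossing forces the crossing elements to lie inside a common circuit or cocircuit vertex---which contradicts the normalization. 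Once this canonical laminar family of 2-separations is identified, the tree $T$ is recovered from it up to relabeling of its edges by taking its vertices to be the minimal atoms cut out by the family.
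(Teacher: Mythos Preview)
The paper does not prove this theorem at all: it is quoted as a result of Cunningham and Edmonds \cite{cthesis}, with a pointer to \cite[Theorem~8.3.10]{oxl2} for details. So there is no proof in the paper against which to compare your attempt.

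That said, your sketch is not yet a proof. The existence half is a plausible outline, though you should check that when you glue $T_X$ and $T_Y$ the basepoint $p$ really sits in a single vertex on each side and that the resulting edge satisfies the definition of a matroid-labeled tree; you should also verify that your post-hoc normalization (contracting edges between two circuit vertices or two cocircuit vertices) terminates and preserves (i)--(iii), and that the merged vertex is again a circuit (respectively cocircuit) of the right size.

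The uniqueness half has a genuine gap. Your key sentence --- that uncrossing two crossing $2$-separations coming from the same normalized tree ``forces the crossing elements to lie inside a common circuit or cocircuit vertex --- which contradicts the normalization'' --- does not parse: the normalization rule forbids \emph{adjacent} circuit (or cocircuit) vertices, not the presence of a circuit or cocircuit vertex, and two edges of $T$ can very well display crossing $2$-separations precisely when they are incident with a common circuit or cocircuit vertex. Indeed, that is exactly what happens and is not a contradiction. The real content of the uniqueness proof is to show (a) that the $3$-connected vertex labels are intrinsic to $M$ (they are the $3$-connected minors of $M$ that are maximal in a precise sense), (b) that the circuit and cocircuit vertices are forced once the $3$-connected pieces are known, and (c) that the tree structure is then determined by which pieces share basepoints. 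None of these steps is present in your outline, and the appeal to Lemma~\ref{connectivity_function} and submodularity alone does not supply them.
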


The tree decomposition $T$ whose existence is guaranteed by the last theorem is called the {\it canonical tree decomposition} of $M$. Although circuits and cocircuits with at most three elements are $3$-connected matroids, when we refer to a {\it $3$-connected vertex}, we shall mean one with at least four elements. Clearly, for each edge $p$ of $T$, the graph $T\ba p$ has two components. Thus $p$ induces a partition of $V(T)$ and a corresponding partition $(X_p,Y_p)$ of $E(M)$. The latter partition is a 
%Each edge $p$ of $T$ induces a partition $(X_p,Y_p)$ of $E(M)$. This partition 
%We say that each edge of $p$ {\it displays} a $2$-separation $(X,Y)$, where $X$ is the set of all elements of $E(M)$ on one side of $p$, and $Y$ is the set of all elements on the opposite side. 
 $2$-separation of $M$;  we say that it is {\it displayed} by the edge $p$. Moreover, $M= M_{X_p} \oplus_2 M_{Y_p}$  where $M_{X_p}$ and $M_{Y_p}$ have ground sets   $X_p \cup p$ and $Y_p \cup p$, respectively. We shall refer to this 2-sum decomposition of $M$ as having been  {\it induced} by the edge $p$ of $T$.

 %, we can say that a vertex $M_1$ of $T$ is on a particular side of the separation, even if $M_1$ has no elements of $E(M)$.

We shall frequently use the following well-known result, which appears, for example, as \cite[Lemma~2.15]{jt}.

%If $e$ and $f$ are elements of $E(M)$ that belong to distinct matroids $M_1$ and $M_2$ that label vertices in the canonical tree decomposition $T$, we will use the path between these vertices in $T$ to construct an $N$-minor using $e$ and $f$. To this end, we require the following lemma that appears, for example, as \cite[Lemma~2.15]{jt}:

\begin{lemma}\label{path in tree}
Let $M_1$ and $M_2$ label distinct vertices in a tree decomposition $T$ of a connected matroid $M$. Let $P$ be the path in $T$ joining $M_1$ and $M_2$, and let $p_1$ and $p_2$ be the edges of $P$ meeting $M_1$ and $M_2$, respectively. Then $M$ has a minor that uses $(E(M_1) \cup E(M_2)) \cap E(M)$ and is isomorphic to the $2$-sum of $M_1$ and $M_2$, with respect to the basepoints $p_1$ and $p_2$.
\end{lemma}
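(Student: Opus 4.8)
The plan is to induct on $|V(T)|$, at each stage discarding a leaf of $T$ other than $M_1$ or $M_2$, until $T$ has been pared down to the path $P$ itself, and then to collapse the interior of $P$ to a single parallel pair joining $M_1$ to $M_2$. Throughout I would lean on the standard facts (developed around \cite[Theorem~8.3.10]{oxl2}) describing how minors interact with the $2$-sum: if $M = A \oplus_2 B$ with basepoint $p$ and $B'$ is a minor of $B$ got by deletions and contractions within $E(B) - p$ in which $p$ is still non-separating, then $A \oplus_2 B'$ (with basepoint $p$) is the corresponding minor of $M$; this applies in particular to the $2$-sum decompositions induced by the edges of a tree decomposition. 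I would also note that, since $M$ is connected and is the iterated $2$-sum carried by $T$, while a $2$-sum is connected if and only if both of its parts are, every vertex label of $T$ is connected.

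For the pruning move, let $L$ be a leaf of $T$ with $L \notin \{M_1,M_2\}$, joined to its neighbour by the edge $p$. The edge $p$ displays a $2$-separation $(X_p,Y_p)$ of $M$ with $M_{Y_p} = L$ and $M_{X_p}$ the iterated $2$-sum carried by $T - L$, and $M = M_{X_p} \oplus_2 M_{Y_p}$. Since $L$ is connected with $|E(L)| \ge 2$, it has a minor isomorphic to $U_{1,2}$ whose ground set is $\{p,e\}$ for some $e \in E(L) - p$; carrying out the corresponding deletions and contractions inside $M$ — all performed within $E(L) \cap E(M)$, so that $M_{X_p}$ and $p$ are left alone — produces a minor of $M$ isomorphic to $M_{X_p}$ via the relabelling $p \mapsto e$. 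Hence $M$ has a minor $M'$ with tree decomposition $T - L$, in which only the label of the neighbour of $L$ is altered, and only by a relabelling. As $L \notin \{M_1,M_2\}$, the set $(E(M_1)\cup E(M_2)) \cap E(M)$ is disjoint from $E(L)$ and survives into $M'$; moreover the path in $T - L$ joining $M_1$ and $M_2$ is still $P$, with the same terminal edges $p_1,p_2$. Iterating, I may assume $T$ has no leaf other than $M_1$ and $M_2$, whence $T = P$, a path $M_1 = N_0, N_1, \dots, N_k = M_2$ with $p_1,p_2$ the edges $N_0N_1$ and $N_{k-1}N_k$.

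If $k = 1$ then $M = M_1 \oplus_2 M_2$ and we are done. Otherwise write $M = M_1 \oplus_2 Q \oplus_2 M_2$, where $Q$ is the iterated $2$-sum carried by the subpath $N_1, \dots, N_{k-1}$; then $Q$ is connected and $\{p_1,p_2\} \subseteq E(Q)$ with $p_1 \ne p_2$. Since connectivity coincides with $U_{1,2}$-connectivity, $Q$ has a minor isomorphic to $U_{1,2}$ using $\{p_1,p_2\}$; being a $2$-element matroid, this minor has ground set exactly $\{p_1,p_2\}$, and it is obtained from $Q$ by operations within $E(Q) - \{p_1,p_2\}$. Performing those operations in $M$ replaces $Q$ inside the $2$-sum by $U_{1,2}$ on $\{p_1,p_2\}$, and $M_1 \oplus_2 (U_{1,2}\text{ on }\{p_1,p_2\}) \oplus_2 M_2$ is precisely the $2$-sum of $M_1$ and $M_2$ with respect to $p_1$ and $p_2$. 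As every real element of $M_1$ and of $M_2$ has been retained, the resulting minor uses $(E(M_1)\cup E(M_2)) \cap E(M)$, which completes the proof.

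I expect the step needing the most care to be the pruning move: one must check that the deletions and contractions that reduce $L$ to a parallel pair at $p$, carried out inside $M$, genuinely yield the $2$-sum carried by $T - L$ — equivalently, that passing to a minor within one branch of a tree decomposition commutes with the entire iterated $2$-sum and preserves all the other displayed $2$-separations, with the single affected basepoint being relabelled harmlessly. Once the interaction between minors and $2$-sums from \cite[Section~8.3]{oxl2} is in place, everything else is bookkeeping.
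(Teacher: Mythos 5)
The paper never proves Lemma~\ref{path in tree}: it is quoted as a known result from \cite{jt}, so there is no internal argument to compare yours against. Judged on its own, your proof is correct and is a reasonable self-contained derivation from the standard facts about how deletion and contraction pass through a $2$-sum (\cite[Section 8.3]{oxl2}). Your pruning move is essentially a re-derivation of the paper's Lemma~\ref{replace_basepoint}, so you could shorten the argument by simply invoking that lemma repeatedly to discard the branches of $T$ hanging off the path $P$; the second half, collapsing the interior of $P$ to a $2$-circuit on $\{p_1,p_2\}$ by taking a circuit of the connected middle matroid $Q$ through both basepoints, contracting the rest of that circuit and deleting everything else, is exactly the right mechanism. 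The one point you correctly flag as delicate -- that the operations performed inside one part of the $2$-sum really commute with the whole iterated $2$-sum -- does go through, because with the operations ordered as above the basepoints remain in a circuit (hence are never loops or coloops) at every intermediate stage, so each application of the identities $(A\oplus_2 B)\backslash e = A\oplus_2(B\backslash e)$ and $(A\oplus_2 B)/e = A\oplus_2(B/e)$ for $e\in E(B)-p$ is legitimate; spelling out that ordering is the only detail I would add. Two cosmetic remarks: when the neighbour of the pruned leaf is $M_1$ or $M_2$, its label changes by relabelling a basepoint other than $p_1,p_2$, which affects nothing since the lemma only asks for a minor isomorphic to the $2$-sum and using $(E(M_1)\cup E(M_2))\cap E(M)$; and the connectivity of every vertex label, which you use to find the needed circuits, follows as you say from connectivity of $M$ and the fact that a $2$-sum is connected if and only if both parts are.
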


We will  often use the next result,  another consequence of  Theorem~\ref{cet}.

%\begin{lemma}\label{replace_basepoint}
%Let $(X,Y)$ be a $2$-separation in a $2$-connected matroid $M$. Let $N$ be a $3$-connected minor of $M$ with $|E(N)| \geq 4$. Assume $|E(N) \cap Y| \leq 1$. If $|E(N) \cap Y| = 1$, let $y \in E(N) \cap Y$; otherwise let $y$ be an arbitrary element of $Y$. Then $M$ has, as a minor, the matroid $M_X^y$ that is obtained from $M_X$ by relabeling $p$ by $y$. Moreover, $M_X^y$ has $N$ as a minor.
%\end{lemma}

\begin{lemma}\label{replace_basepoint}
Let $(X,Y)$ be a $2$-separation displayed by an edge $p$ in a $2$-connected matroid $M$. Suppose $y \in Y$. Then $M$ has, as a minor, the matroid $M_X(y)$ that is obtained from $M_X$ be relabeling $p$ by $y$. In particular, let $N$ be a $3$-connected minor of $M$ with $|E(N)| \geq 4$ and $|E(N) \cap Y| \leq 1$. If $|E(N) \cap Y| = 1$, let $y \in E(N) \cap Y$; otherwise let $y$ be an arbitrary element of $Y$. Then $M_X(y)$ has $N$ as a minor.
\end{lemma}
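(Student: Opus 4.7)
The plan follows the two parts of the statement. For the first assertion, I would use that $M_Y$, as a summand of a $2$-sum, is $2$-connected with $|E(M_Y)| \geq 3$; in particular, $p$ and $y$ lie in some common circuit $C_Y$ of $M_Y$, and the minor $(M_Y | C_Y)/(C_Y - \{p,y\})$ is the $2$-element circuit $U_{1,2}$ on $\{p, y\}$. These operations lie entirely on the $Y$-side of the $2$-sum, so by the standard identity that minor operations confined to one summand of a $2$-sum act on that summand alone, the corresponding minor of $M$ is $M_X \oplus_2 U_{1,2}$, which is precisely $M_X$ with the basepoint $p$ relabeled $y$, i.e., $M_X(y)$.

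For the second assertion, write $N = M \ba D / C$ for disjoint $C, D \subseteq E(M) - E(N)$, and split $C = X_C \cup Y_C$, $D = X_D \cup Y_D$ along the $2$-separation $(X,Y)$. Consider the auxiliary matroid $M_Y' := M_Y \ba Y_D / Y_C$, whose ground set is $\{p\} \cup (E(N) \cap Y)$.

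In Case~1, where $|E(N) \cap Y| = 0$, the matroid $M_Y'$ has a single element, so $p$ is either a loop or a coloop; the $2$-sum–minor identity then gives $M \ba Y_D / Y_C = M_X \ba p$ or $M_X / p$ respectively. For any $y \in Y$, $M_X(y) \ba y = M_X \ba p$ and $M_X(y) / y = M_X / p$, so $N$ arises from $M_X(y)$ by the matching step on $y$ followed by $\ba X_D / X_C$. In Case~2, where $|E(N) \cap Y| = 1$ with $E(N) \cap Y = \{y\}$, the matroid $M_Y'$ has ground set $\{p, y\}$. If $y$ were a loop or coloop of $M_Y'$, that property would persist in $M \ba Y_D / Y_C$ and hence in $N$, contradicting that $N$ is $3$-connected with $|E(N)| \geq 4$. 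Thus $M_Y' \cong U_{1,2}$, so $M \ba Y_D / Y_C = M_X(y)$, and $N = M_X(y) \ba X_D / X_C$ is a minor of $M_X(y)$.

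The main obstacle is invoking the identity for minors of $2$-sums cleanly, especially its degenerate forms when the basepoint becomes a loop or coloop in the reduced summand; these are standard (see, e.g., \cite[Section~8.3]{oxl2}) but the bookkeeping they demand is what makes each case go through, and the $3$-connectivity of $N$ is exactly what rules out the degenerate forms in Case~2.
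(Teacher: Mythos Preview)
The paper does not supply a proof of this lemma; it is stated as ``another consequence of Theorem~\ref{cet}'' and left at that. Your direct argument via $2$-sum minor identities is exactly the natural way to fill this in, and it is correct.

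One small correction in Case~1: the correspondence is reversed. If $p$ is a loop in $M_Y'$ then $M \ba Y_D / Y_C = M_X / p$, and if $p$ is a coloop then $M \ba Y_D / Y_C = M_X \ba p$ (check the case $M_Y = U_{1,2}$ on $\{p,y\}$: contracting $y$ makes $p$ a loop and yields $M_X/p$). This does not affect your conclusion, since you only need that $M \ba Y_D / Y_C$ is one of $M_X \ba p$ or $M_X / p$, and both are minors of $M_X(y)$ via $\ba y$ or $/y$ respectively.
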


%We call the matroid obtained from $M_X^y$ as in the last corollary a {\it specially relabeled $M_X$-minor} of $M$. If $M_1$ labels a vertex of $T$ and $y \in E(M)$ where $y$ is used by another vertex, then by applying Lemma~\ref{replace_basepoint} to every basepoint of $M_1$, we obtain a specially relabeled $M_1$-minor $M_1^y$ that uses $E(M_1) \cap E(M)$ and $y$.
%
%In the following lemma, for a $2$-separation $(X,Y)$ of $M$, we say that a side $X$ is {\it $N$-sided} if $X$ has all but at most one elements of a particular minor $N$.

Let $T$ be the canonical tree decomposition of a $2$-connected matroid $M$, and let $M_0$ label a vertex of $T$. Let $p_1, p_2, \ldots, p_d$ be the edges of $T$ that meet $M_0$. For each $p_i$, let $(X_i, Y_i)$ be the $2$-separation of $M$ displayed by $p_i$, where $M_0$ is on the $X_i$-side of the $2$-separation. For each $i$, let $y_i \in Y_i$. Then, by repeated application of Lemma~\ref{replace_basepoint}, we deduce that $M$ has, as a minor, the matroid that is obtained from $M_0$ by relabeling $p_i$ by $y_i$ for all $i$ in $\{1,2,\ldots,d\}$. We denote this matroid by $M_0(y_1,y_2,\ldots,y_d)$ and call it a {\it specially relabeled} $M_0$-minor of $M$.

The following result, which is straightforward to prove by repeated application of Lemma~\ref{connectivity_function}, is well known.

\begin{lemma}\label{replace_basepoint2}
Let $N$ be a $3$-connected matroid with $|E(N)| \geq 3$. Let $M$ be a $2$-connected matroid with canonical tree decomposition $T$. Then there is a unique vertex $M'$ of $T$ such that, for each edge $p$ of $T$,   the partition of $V(T)$ induced by $p$ has the vertex $M'$   on the same side as at least $|E(N)| - 1$ elements of $N$. Moreover, there is a specially relabeled $M'$-minor of $M$ that has $N$ as a minor.
\end{lemma}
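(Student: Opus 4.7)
The plan divides the argument into two parts: establishing existence and uniqueness of $M'$, and then constructing the specially relabeled minor.

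First, for any edge $p$ of $T$ displaying the $2$-separation $(X_p, Y_p)$ of $M$, I would set $A_p = E(N) \cap X_p$ and $B_p = E(N) \cap Y_p$; these partition $E(N)$ since $E(N) \subseteq E(M) = X_p \cup Y_p$. By Lemma~\ref{connectivity_function}, $\kappa_N(A_p, B_p) \leq \kappa_M(A_p, B_p) \leq \lambda_M(X_p) = 1$. Since $N$ is $3$-connected and $|E(N)| \geq 3$, if both $|A_p| \geq 2$ and $|B_p| \geq 2$ then $\kappa_N(A_p, B_p) \geq 2$, a contradiction. Hence exactly one side, whose vertex set I call $H_p \subseteq V(T)$, is ``heavy'' with at least $|E(N)| - 1$ elements of $E(N)$, while the other is ``light'' with at most one element.

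For existence of $M'$, the plan is to apply the Helly property for subtrees of a tree. Each $H_p$ is a component of $T - p$, hence a subtree of $T$. For any two distinct edges $p, q$, deleting both yields three subtrees $C_1, C_2, C_3$, where, after labeling, $H_p \in \{C_1, C_2 \cup C_3\}$ and $H_q \in \{C_1 \cup C_2, C_3\}$. The only way $H_p \cap H_q = \emptyset$ is $H_p = C_1$ and $H_q = C_3$; but then at least $2(|E(N)| - 1)$ elements of $E(N)$ would lie in the disjoint union $C_1 \cup C_3$, contradicting $|E(N)| \geq 3$. Hence the $H_p$ pairwise intersect, and by Helly, $\bigcap_p H_p \neq \emptyset$. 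Uniqueness follows immediately: if $u \neq v$ both lie in this intersection, the first edge $p$ on the path from $u$ to $v$ separates them, so one must lie outside $H_p$, a contradiction.

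For the second part, let $p_1, \ldots, p_d$ be the edges of $T$ meeting $M'$ and let $(X_i, Y_i)$ be the $2$-separation displayed by $p_i$ with $M'$ on the $X_i$ side. By the defining property of $M'$, $|E(N) \cap Y_i| \leq 1$ for each $i$. I would set $y_i$ to be the unique element of $E(N) \cap Y_i$ when that set is nonempty, and an arbitrary element of $Y_i$ otherwise. Iterated application of Lemma~\ref{replace_basepoint}, as described in the paragraph preceding this statement, then yields that the specially relabeled minor $M'(y_1, \ldots, y_d)$ of $M$ has $N$ as a minor. The main obstacle is the pairwise-intersection step in the Helly argument, where the hypothesis $|E(N)| \geq 3$ is used essentially; the remaining steps are bookkeeping once the correct central vertex has been identified.
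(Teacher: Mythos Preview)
Your argument is correct and is essentially what the paper has in mind: the paper gives no proof, only the remark that the result is well known and follows by repeated application of Lemma~\ref{connectivity_function}. You carry this out by using Lemma~\ref{connectivity_function} to show that each edge of $T$ has a well-defined heavy side, and then invoking the Helly property of subtrees to locate the unique central vertex; the second half is exactly the iterated use of Lemma~\ref{replace_basepoint} that the preceding paragraph sets up. One minor caveat worth noting in a final write-up: the ``in particular'' clause of Lemma~\ref{replace_basepoint} is stated only for $|E(N)|\ge 4$, whereas the present lemma allows $|E(N)|=3$, so that boundary case should be handled (or dismissed) separately.
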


\section{$3$-connected matroids}
\label{3-connected matroids}

Let $\mathcal{N}$ be a set of matroids. A matroid $M$ is {\it $\mathcal{N}$-connected} if, for every two distinct elements $e$ and $f$ of $M$, there is an $N$-minor of $M$ that uses $\{e,f\}$ for some $N$ in $\mathcal{N}$. A consequence of \cite[Proposition 4.3.6]{oxl2} is that 
a matroid with at least three elements is $\{U_{1,3}, U_{2,3}\}$-connected if and only if it is connected. The first result in this section characterizes $U_{2,3}$-connected matroids. 
One may hope for a characterization of $3$-connectivity in terms of $\mathcal{N}$-connectivity, but no such characterization exists. To see this, note that 
if $M$ is $\mathcal{N}$-connected, then so is $M \oplus_2 M$.  A characterization of $3$-connectivity in terms of minors containing $4$-element sets, as opposed to the $2$-element sets currently under consideration, is given in \cite{tm1}.

%First we present results for $3$-element connected matroids before developing a general theory for larger $3$-connected matroids.

\begin{proposition}\label{U23}
A matroid $M$ is $U_{2,3}$-connected if and only if $M$ is connected and simple.
\end{proposition}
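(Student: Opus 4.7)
The plan is to prove the two directions of the equivalence separately using elementary minor-theoretic observations.

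For the forward direction, suppose $M$ is $U_{2,3}$-connected. I would first show $M$ is connected: given distinct $e,f \in E(M)$, deleting the third element from a $U_{2,3}$-minor using $\{e,f\}$ yields a $U_{1,2}$-minor using $\{e,f\}$, so $M$ is $U_{1,2}$-connected, which is the usual notion of matroid connectivity. For simplicity, the key observation is that loops and parallel pairs survive in every minor that retains the relevant elements: if $e$ were a loop of $M$, then $e$ would be a loop in every minor using $e$; and if $\{e,f\}$ were a $2$-circuit, then a short rank computation shows $\{e,f\}$ remains dependent after deletion and contraction of any set disjoint from $\{e,f\}$. Since $U_{2,3}$ has neither loops nor $2$-circuits, $M$ can have neither.

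For the backward direction, suppose $M$ is connected and simple with $|E(M)| \geq 3$, and fix distinct $e,f \in E(M)$. The standard fact that any two elements of a connected matroid lie in a common circuit produces a circuit $C$ containing $\{e,f\}$, and simplicity forces $|C| \geq 3$. I would then pick any $x \in C - \{e,f\}$, set $S = C - \{e,f,x\}$, and consider the minor $M' = M/S \ba (E(M) - C)$, which has ground set $\{e,f,x\}$. The goal is to show $M' \cong U_{2,3}$. Since $S \subsetneq C$ and every proper subset of a circuit is independent, $S$ is independent and a routine rank calculation gives that $C - S = \{e,f,x\}$ is a circuit of $M/S$. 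Moreover, none of $e,f,x$ is a loop of $M/S$, because each of $S \cup \{e\}$, $S \cup \{f\}$, $S \cup \{x\}$ is a proper subset of the circuit $C$ and hence independent, so none of $e,f,x$ lies in $\text{cl}_M(S)$. Therefore $M'$ is a rank-$2$ matroid on three non-loop elements whose ground set is a circuit, i.e., $M' \cong U_{2,3}$, and it uses $\{e,f\}$.

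I do not foresee any substantive obstacle. The argument reduces to two routine minor-theoretic facts: dependence of a retained set is preserved by deletion and contraction of disjoint elements (used in the forward direction), and $C - S$ is a circuit of $M/S$ whenever $S$ is a proper subset of a circuit $C$ (used in the backward direction). Both follow immediately from the rank-function definition of minors.
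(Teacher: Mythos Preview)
Your approach is the same as the paper's: for the backward direction, take a circuit through $\{e,f\}$ (which has size at least three by simplicity) and extract a $U_{2,3}$-minor; for the forward direction, observe that connectivity and simplicity are forced. The paper dispatches the forward direction with the word ``clearly'' and leaves the explicit construction of the $U_{2,3}$-minor implicit, so your write-up simply fills in details the paper omits.

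There is one slip to fix. In arguing connectivity you say that \emph{deleting} the third element from a $U_{2,3}$-minor yields a $U_{1,2}$-minor using $\{e,f\}$. But $U_{2,3}\ba x \cong U_{2,2}$, two coloops, which is disconnected and tells you nothing about $e$ and $f$ lying in a common component of $M$. You want to \emph{contract} the third element instead, since $U_{2,3}/x \cong U_{1,2}$; with that change your argument is correct. (Alternatively, simply note that $e$ and $f$ lie in a common circuit of a minor of $M$, hence in the same component of $M$.) A smaller point: once you have shown that $\{e,f,x\}$ is a circuit of $M/S$, the separate check that none of $e,f,x$ is a loop is redundant, since a circuit of size three cannot contain a one-element circuit.
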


\begin{proof}

Suppose $M$ is $U_{2,3}$-connected. Clearly $M$ is connected and simple. Conversely, if $M$ is connected and simple, and $e$ and $f$ are distinct elements of $M$, then $M$ has a circuit $C$ containing $\{e,f\}$ and $|C| \geq 3$. Hence $M$ has a $U_{2,3}$-minor using $\{e,f\}$, so $M$ is $U_{2,3}$-connected.
%Suppose $M$ is $U_{2,3}$-connected. Then clearly a pair of elements must be in the same component of $M$ for them to be in a $U_{2,3}$ minor, so $M$ is connected. Moreover, if two elements are in parallel, they cannot be in a $U_{2,3}$ minor together.
%
%Let $M$ a connected and simple matroid. Since $M$ is connected, for every pair of elements $x$ and $y$, there is a circuit $C$ containing them. Since $x$ and $y$ are not in parallel, $|C| > 2$. We achieve a $U_{2,3}$ minor containing $x$ and $y$ by deleting all the elements not in $C$ and then contracting all the elements of $C$ except $x$, $y$ and one other.
\end{proof}

\begin{corollary}\label{U13}
A matroid $M$ is $U_{1,3}$-connected if and only if $M$ is connected and cosimple.
\end{corollary}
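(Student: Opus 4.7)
The plan is to deduce this corollary directly from Proposition \ref{U23} by matroid duality. The essential observation is that $U_{1,3} \cong U_{2,3}^*$, so $U_{1,3}$-connectivity is exactly the dual notion of $U_{2,3}$-connectivity. Since matroid duality preserves the ground set while swapping deletion and contraction (i.e., $(M \ba e)^* = M^* / e$ and $(M / e)^* = M^* \ba e$), $M$ has a minor isomorphic to $U_{1,3}$ using $\{e,f\}$ if and only if $M^*$ has a minor isomorphic to $U_{2,3}$ using $\{e,f\}$. Consequently, $M$ is $U_{1,3}$-connected if and only if $M^*$ is $U_{2,3}$-connected.

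Applying Proposition \ref{U23} to $M^*$, the latter condition is equivalent to $M^*$ being connected and simple. Using the standard duality facts that $M$ is connected if and only if $M^*$ is connected, and that $M$ is cosimple if and only if $M^*$ is simple, this translates back to $M$ being connected and cosimple, as required.

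The argument presents no genuine obstacle; this is a routine dualization, which is presumably why the authors state the result as a corollary of Proposition \ref{U23} rather than proving it independently. The only points to verify, all trivial, are the identification $U_{1,3} \cong U_{2,3}^*$ and the observation that the qualifier ``uses $\{e,f\}$'' is automatically preserved under duality because the ground set of $M^*$ equals that of $M$.
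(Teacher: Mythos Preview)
Your proof is correct and matches the paper's intended approach: the paper states the result as an immediate corollary of Proposition~\ref{U23} with no separate argument, and duality is the only natural way to read that. There is nothing to add.
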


We will describe $N$-connectivity for a $3$-connected matroid $N$ by first considering the case when $N$ is $U_{2,4}$.
We will refer to binary and non-binary matroids that label vertices of a canonical tree decomposition as {\it binary} and {\it non-binary vertices}.

\begin{theorem}
%A matroid $M$ is $U_{2,4}$-connected if and only if $M$ is connected, non-binary, and every binary matroid in the canonical tree decomposition $T$ of $M$ has at most one basepoint where every path in $T$ between two matroids with unique basepoints contains a vertex labeled by a non-binary matroid.
A matroid $M$ is $U_{2,4}$-connected if and only if $M$ is connected and non-binary, and, in the canonical tree decomposition  of $M$,
\begin{enumerate}[label=(\roman*)]
\item every binary vertex has at most one element that is not a basepoint; and
\item on every path between two binary vertices that each contain a unique element of $E(M)$, there is a non-binary vertex.
\end{enumerate}
\end{theorem}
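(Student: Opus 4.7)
The plan is to analyze both directions via the canonical tree decomposition $T$ of $M$, with Lemma~\ref{replace_basepoint2} as the central tool. For any $3$-connected minor $N$ of $M$, that lemma singles out a unique vertex $M'$ of $T$ (with at least $|E(N)|-1$ elements of $N$ on its side of every displayed $2$-separation) such that a specially relabeled $M'$-minor has $N$ as a minor. The pivot on which every argument turns is the observation that a specially relabeled $M'$-minor is isomorphic to $M'$, so if $M'$ is binary it cannot contain $U_{2,4}$.

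For the forward direction, $U_{2,4}$-connectivity immediately gives that $M$ is connected and non-binary. To prove (i), suppose a binary vertex $M_0$ has two non-basepoint elements $e, f$; both lie on the $M_0$-side of every $2$-separation displayed by an edge adjacent to $M_0$. Since $|E(U_{2,4})|=4$ forces at least three elements of the corresponding $U_{2,4}$-minor onto the $M'$-side, the vertex $M'$ must sit on the $M_0$-side of every such edge, hence $M'=M_0$, which is binary --- a contradiction. For (ii), given binary vertices $M_1, M_2$ with respective unique non-basepoints $e_1, e_2$ joined by a path $P$ whose every vertex is binary, the vertex $M'$ associated with any $U_{2,4}$-minor using $\{e_1, e_2\}$ must again be non-binary and thus lie off $P$; at the edge $q$ of $T$ incident to $M'$ on the unique $T$-path from $M'$ to $P$, the $M'$-side contains no vertex of $P$, and in particular neither $e_1$ nor $e_2$, leaving at most two elements of the $U_{2,4}$-minor on that side, the required contradiction.

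For the reverse direction, take arbitrary $e, f \in E(M)$ with host vertices $M_e, M_f$. If $M_e=M_f=M_0$, condition (i) forces $M_0$ to be $3$-connected and non-binary (it has two non-basepoints), and Theorem~\ref{nonbinary 2 elts} produces a $U_{2,4}$-minor of $M_0$ using $\{e,f\}$, which is inherited by the specially relabeled $M_0$-minor of $M$. If $M_e \ne M_f$, I locate a $3$-connected non-binary vertex $M^*$ on the path $P$ between $M_e$ and $M_f$: if either endpoint is non-binary take it, otherwise (i) forces $e, f$ to be unique non-basepoints so that (ii) supplies a non-binary vertex interior to $P$. Iterated application of Lemma~\ref{replace_basepoint} yields a specially relabeled $M^*$-minor of $M$ in which the basepoints of $M^*$ pointing along $P$ toward $M_e$ and $M_f$ have been renamed as $e$ and $f$ respectively; Theorem~\ref{nonbinary 2 elts} applied to this isomorphic copy of $M^*$ provides a $U_{2,4}$-minor using $\{e,f\}$.

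The main obstacle will be the tree-based bookkeeping in the forward half of (ii): I must carefully identify the separating edge $q$ adjacent to the non-binary vertex $M'$ on the way to $P$ and verify that \emph{all} of $P$, in particular both $M_1$ and $M_2$, lies on the non-$M'$-side of the $2$-separation displayed by $q$, so that both $e_1$ and $e_2$ are excluded from the $M'$-side and the element-counting inequality from Lemma~\ref{replace_basepoint2} fails.
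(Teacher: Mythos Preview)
Your proposal is correct and follows essentially the same approach as the paper: both directions hinge on Lemma~\ref{replace_basepoint2} together with Seymour's Theorem~\ref{nonbinary 2 elts}, and your case analysis in the reverse direction matches the paper's. The only cosmetic difference is in the forward proof of (ii): you argue by contradiction (assume every vertex on $P$ is binary, so the non-binary vertex $M'$ supplied by Lemma~\ref{replace_basepoint2} lies off $P$, and the edge $q$ from $M'$ toward $P$ leaves both $e_1,e_2$ on the wrong side), whereas the paper argues directly that the vertex $M'$ must lie on $P$ by looking at the edge of $T$ incident with $M'$ in the direction of $M_1$ and observing that $M_2$ is forced onto the $M'$-side. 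These are the same counting argument read in opposite directions. One small wording point: in your reverse direction when $M^*=M_e$, there is no ``basepoint of $M^*$ pointing toward $M_e$'' to relabel---$e$ already sits in $M^*$---so only the basepoint toward $M_f$ needs relabeling; your intent is clear but the sentence as written is slightly off.
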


\begin{proof}
Suppose $M$ is non-binary and connected, and the canonical tree decomposition $T$ of $M$ satisfies the above conditions. Suppose $e$ and $f$ are distinct elements of $M$. If $e$ and $f$ are in the same $3$-connected vertex $M_0$ of $T$, then, by (i), $M_0$ is non-binary. Thus, by Theorem~\ref{nonbinary 2 elts}, $M$ has a $U_{2,4}$-minor using $\{e,f\}$. 
%Proposition 12.3.8 of Oxley
%If $e$ is in a non-binary $M_1$ in the tree decomposition and $f$ is in a different matroid $M_2$, then $f$ can be contracted to be in $M$. Similarly, if $e$ and $f$ are both in binary matroids, they can be contracted to the binary matroid between them. Thus these conditions are sufficient for $U_{2,4}$-connectedness.

Next suppose $e$ belongs to a binary vertex $M_1$ of $T$, and $f$ belongs to a non-binary vertex $M_0$ of degree $d$. By Lemma~\ref{replace_basepoint}, $M$ contains a specially labeled $M_0$-minor $M_0(e, y_2, y_3, \ldots, y_d)$ using $\{e,f\}$. Similarly, let $e$ and $f$ belong to binary vertices $M_1$ and $M_2$, and let $M_0$ be a  non-binary vertex on the path between them in $T$. Then $M$ contains a specially labeled $M_0$-minor $M_0(e,f,y_3,y_4,\ldots,y_d)$.
%
%Next suppose $e$ belongs to a non-binary matroid $M_1$ labeling a vertex of $T$, and $f$ belongs to a binary matroid $M_2$. Then, by Corollary~\ref{replace_basepoint}, $M$ contains a specially labeled $M$-minor using $\{e,f\}$. Similarly, let $e$ and $f$ belong to distinct matroids $M_1$ and $M_3$ labeling vertices of $T$, and let $M_2$ be the label of a vertex corresponding to a non-binary matroid on the path between $M_1$ and $M_2$ in $T$. 
%Let $p_1$ and $p_3$ label the edges of $P$ incident with $M_2$ that are closest to $M_1$ and $M_3$, respectively. Again using Corollary~\ref{replace_basepoint}, $M$ contains a labeled $M_2$-minor using $\{e,f\}$.
%Let $p_1$ and $p_3$ label the edges of $P$ incident with $M_1$ and $M_3$, respectively, that are closest to $M_2$, and let $p_{21}$ and $p_{22}$ label the edges of $P$ incident with $M_2$ that are closer to $M_1$ and $M_3$, respectively. Now $M_1$ and $M_3$ have circuits containing $\{e,p_1\}$ and $\{f, p_3\}$, respectively. Then, by Lemma~\ref{path in tree}, $M$ has a minor $M_1 \oplus_2 M_2 \oplus_2 M_3$ where we identify $p_1$ and $p_{21}$ to form the first $2$-sum and $p_3$ and $p_{22}$ to form the second, so $M$ has a minor isomorphic to $M_2$ using $\{e,f\}$. 
Thus, by Theorem~\ref{nonbinary 2 elts}, $M$ has a $U_{2,4}$-minor  using $\{e,f\}$.

Suppose now that $M$ is $U_{2,4}$-connected. Clearly $M$ is non-binary and connected. 
If a binary vertex   $M_1$ in $T$  contains two non-basepoints $e$ and $f$, then, by Lemma~\ref{replace_basepoint2},  a $U_{2,4}$-minor of $M$ using $\{e,f\}$ must be a minor of $M_1$; a contradiction.

%Note neither $a$ nor $b$ is in $E(M_1)$, then there is a $2$-separation of $M$ with $\{e,f\}$ on one side and $\{a,b\}$ on the other. This is a contradiction because it implies $(\{e,f\}, \{a,b\})$ is a $2$-separation in $N$. Suppose without loss of generality that $a \in E(M_1)$. If $b \in E(M_1)$, then $M_1$ has $N$ as a minor, and if $b \notin E(M_1)$, then $M_1$ has a $U_{2,4}$ minor using $\{e,f,a,p\}$ where $p$ is the basepoint connecting $M_1$ to a matroid using $b$.
%

Now suppose $e$ and $f$ are the unique non-basepoints of binary vertices $M_1$ and $M_2$, respectively, in $T$, and let $N$ be a $U_{2,4}$-minor of $M$ using $\{e,f\}$. By Lemma~\ref{replace_basepoint2}, $T$ has a nonbinary vertex $M_0$ such that, for  every edge $p$ of $T$, the partition of $V(T)$ induced by $p$ has $M_0$ on the same side as at least $|E(N)| - 1$ elements of $N$. Let $p_1$ be the edge incident with $M_0$ such that $M_1$ and $M_0$ are on opposite sides of the induced partition of $V(T)$. Then $M_2$  must be on the same side of this partition as $M_0$. Hence $M_0$ lies on the path in $T$ between $M_1$ and $M_2$.
% has at most one element of $N$ not on the same side as the others. Suppose that $M_3$ is not on the path in $T$ between $M_1$ and $M_2$; assume without loss of generality that $M_2$ is on the path between $M_1$ and $M_3$. Let $p_{ij}$ be the edge of $T$ incident with $M_i$ and closest to $M_j$. By Lemma~\ref{path in tree}, $M$ has a minor isomorphic to $M_1 \oplus_2 M_2 \oplus_2 M_3$, where the first $2$-sum is with respect to $p_{12}$ and $p_{21}$, and the second $2$-sum is with respect to $p_{23}$ and $p_{32}$. However, the second $2$-sum displays a $2$-separation where the two elements $e$ and $f$ are not on the same side as $M_3$; a contradiction. Thus $M_3$ must be on the path between $M_1$ and $M_2$ in $T$.
\end{proof}

The last theorem can be generalized  as follows.

\begin{theorem}\label{general}
Let $N$ be a $3$-connected matroid with at least four elements. A matroid $M$ is $N$-connected if and only if $M$ is connected, has $N$ as a minor, and, in the canonical tree decomposition  of $M$,
\begin{enumerate}[label=(\roman*)]
\item every vertex that is not $N$-connected has at most one element that is not a basepoint; and
\item on every path between two vertices that are not $N$-connected and that each have   unique non-basepoints, there is an $N$-connected vertex.
\end{enumerate}
%Let $N$ be a $3$-connected matroid with at least four elements. A matroid $M$ is $N$-connected if and only if $M$ is connected, has $N$ as a minor, and every matroid in the canonical tree decomposition $T$ that is not $N$-connected either has at most one basepoint where every path in $T$ between two matroids with unique basepoints contains a vertex labeled by an $N$-conencted matroid.
\end{theorem}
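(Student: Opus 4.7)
The argument parallels the proof of the preceding theorem for $U_{2,4}$-connectivity, with ``not $N$-connected'' replacing ``binary'' and $N$-connectedness of an appropriate vertex supplying the role of Theorem~\ref{nonbinary 2 elts}. For the forward direction, let $e, f$ be distinct elements of $M$, belonging to vertices $V_e, V_f$ of the canonical tree decomposition $T$ as non-basepoints. If $V_e = V_f = V$, then condition~(i) forces $V$ to be $N$-connected, so $V$ has an $N$-minor using $\{e, f\}$ which lifts to an $N$-minor of $M$ via iterated application of Lemma~\ref{replace_basepoint} to the basepoints of $V$. If $V_e \neq V_f$ and one of them, say $V_f$, is $N$-connected, then the specially relabeled minor $V_f(e, y_2, \ldots, y_d)$, with the $y$-value along the edge of $V_f$ toward $V_e$ taken to be $e$, is a minor of $M$ isomorphic to $V_f$ containing $\{e, f\}$, and $N$-connectedness of $V_f$ furnishes the required $N$-minor. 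Otherwise condition~(i) forces $e$ and $f$ to be the unique non-basepoints of $V_e$ and $V_f$, so condition~(ii) produces an $N$-connected vertex $V_0$ on the path between them; the minor $V_0(e, f, y_3, \ldots, y_d)$ does the job.

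For the converse, suppose $M$ is $N$-connected. That $M$ is connected and has $N$ as a minor is immediate. To establish~(i), let $V$ be a vertex with distinct non-basepoints $e, f$; there is an $N$-minor of $M$ using $\{e, f\}$, which by Lemma~\ref{replace_basepoint2} sits inside a specially relabeled $V^*$-minor for a unique vertex $V^*$. If $V^* \neq V$, then some edge $p$ of $V$ has $V^*$ on the non-$V$ side, so the $V$-side of $p$ carries at most one element of the $N$-minor; but both $e$ and $f$ lie on the $V$-side, a contradiction. Hence $V^* = V$, so $V$ itself has an $N$-minor using $\{e, f\}$. Running this argument over all pairs of non-basepoints of $V$, together with a replacement trick for pairs involving a basepoint (described below), yields that $V$ is $N$-connected. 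For~(ii), the analogous centrality argument applied to the $N$-minor using $\{e, f\}$ shows that its center $V_0$ lies on the path between $V_e$ and $V_f$, because if $V_0$ sat in a subtree attached off the path then some path edge would place both $e$ and $f$ on the non-$V_0$ side, leaving at most $|E(N)| - 2$ elements of $N$ on the $V_0$ side; combining this with~(i), one deduces that $V_0$ has enough non-basepoints to be $N$-connected.

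The main obstacle is the converse of~(i) for pairs of the form $\{p, c\}$ with $p$ a basepoint and $c$ a non-basepoint of $V$. The plan is to pick a non-basepoint $y$ in $Y_p$ and apply $N$-connectedness of $M$ to $\{y, c\}$; the key point is that the second non-basepoint of $V$ (guaranteed by hypothesis and, crucially, lying on every edge of $V$ on the $V$-side) forces the center of this new $N$-minor to coincide with $V$, rather than drift into the $Y_p$-subtree, so that the resulting $N$-minor projects back, under the isomorphism between a specially relabeled $V$-minor and $V$, to an $N$-minor of $V$ using $\{p, c\}$.
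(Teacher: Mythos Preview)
The paper does not actually prove Theorem~\ref{general}; it merely asserts that the $U_{2,4}$ argument generalizes, so your approach is the natural one, and your forward direction is correct. The converse, however, has a real gap at precisely the point where the $U_{2,4}$ proof silently leans on Theorem~\ref{nonbinary 2 elts}. In the $U_{2,4}$ theorem, condition~(i) is phrased via the dichotomy binary versus non-binary, and once a vertex is shown to have a $U_{2,4}$-minor, Theorem~\ref{nonbinary 2 elts} automatically upgrades it to $U_{2,4}$-connected. For a general $3$-connected $N$ there is no such upgrade theorem, so to establish~(i) you must show directly that a vertex $V$ with two non-basepoints carries an $N$-minor through \emph{every} pair of its elements, basepoints included. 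Your proposed mechanism---that the second non-basepoint $c'$ ``forces the centre of the $N$-minor through $\{y,c\}$ to coincide with $V$''---does not work: $c'$ need not belong to that $N$-minor at all, so its presence imposes no constraint on the centre, which is perfectly free to lie in the $Y_p$-subtree. In that case you learn something about $M_{Y_p}$, not about $V$.

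The same difficulty reappears in your argument for~(ii). You correctly place the centre $V_0$ on the path from $V_e$ to $V_f$, but Lemma~\ref{replace_basepoint2} only tells you that $V_0$ has an $N$-minor, not that $V_0$ is $N$-connected. Your attempt to close the loop by invoking~(i) requires $V_0$ to have at least two non-basepoints, and nothing you have proved guarantees that; $V_0$ could be an internal vertex of the path with every element a basepoint. So both halves of the converse direction are missing the step that, in the $U_{2,4}$ setting, is supplied for free by Seymour's roundedness theorem.
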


%From this and Theorem~\ref{general}, we can deduce the result for $U_{1,4}$-connected matroids that are not necessarily $3$-connected, as well as $U_{3,4}$-connected matroids.
%
%\begin{corollary}
%A matroid $M$ is $U_{1,4}$ connected if and only if $M$ is connected, has $U_{1,4}$ as a minor, and every matroid in the canonical $2$-sum decomposition either:
%\begin{enumerate}
%
%\item is isomorphic to $U_{1,n}$ where $n \geq 4$, is isomorphic to $U_{2,n}$ where $n \geq 5$, or has rank and corank at least $3$,
%\item has no non-base point, or
%\item has one non-base point such that if $M_1$ and $M_2$ are two distinct such matroids in the $2$-sum tree decomposition of $M$, the path between them has an $U_{1,4}$-connected matroid.
%
%
%\end{enumerate}
%Moreover, there must exist a partition $T_1, \ldots, T_k$ with $k \geq 2$ of the matroids in the Cunningham-Edmonds tree decomposition that contain elements of $M$ such that every $M|_{T_i}$ is $U_{1,3}$-connected.
%\end{corollary}
%
%\begin{corollary}
%A $3$-connected matroid $M$ is $U_{3,4}$-connected if and only if either $M \cong U_{n-2,n}$ where $n \geq 5$, or $M$ has rank and corank at least $3$.
%\end{corollary}

%The following theorem for $M(\mathcal{W}_2)$ holds regardless of whether or not $M$ is $3$-connected.

\section{Connected matroids}
\label{connected matroids}

In this section, we consider $N$-connected matroids when $N$ is connected but not $3$-connected. 

\begin{theorem}\label{2wheel}

A matroid $M$ is $M(\mathcal{W}_2)$-connected if and only $M$ is connected and non-uniform.
\end{theorem}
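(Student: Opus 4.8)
First, note that $M(\mathcal{W}_2) \cong U_{1,2} \oplus_2 U_{1,2}$ (a $4$-element matroid of rank $2$ that is the parallel connection / $2$-sum of two triangles, equivalently two triangles sharing an edge with that edge deleted). It has four elements, rank $2$, and two disjoint $2$-element circuits, so it is connected but not simple, hence not $3$-connected and not uniform. The plan is to prove each direction by exploiting the canonical tree decomposition of Theorem~\ref{cet}.

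\emph{Necessity.} Suppose $M$ is $M(\mathcal{W}_2)$-connected. Since $M(\mathcal{W}_2)$ is connected, every two elements of $M$ lie in a connected minor, so $M$ is connected. For non-uniformity: if $M$ were uniform, say $M = U_{r,n}$, then every minor of $M$ is uniform; but $M(\mathcal{W}_2)$ is not uniform, so $M$ has no $M(\mathcal{W}_2)$-minor at all (unless $|E(M)| < 2$), contradicting $M(\mathcal{W}_2)$-connectivity. Thus $M$ is connected and non-uniform.

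\emph{Sufficiency.} This is the substantive direction. Suppose $M$ is connected and non-uniform, and let $e, f$ be distinct elements of $M$; I must produce an $M(\mathcal{W}_2)$-minor using $\{e,f\}$. I would first reduce to the case $|E(M)| = 4$: by taking minors, it suffices to show that any connected non-uniform matroid with at least three elements has a connected non-uniform minor on a proper subset still containing $\{e,f\}$, unless $|E(M)| \le 4$; iterating, we reach a $4$-element connected non-uniform minor using $\{e,f\}$, and the only such matroid up to duality-free isomorphism is $M(\mathcal{W}_2)$ (the $4$-element connected matroids are $U_{1,4}, U_{3,4}, U_{2,4}, M(\mathcal{W}_2)$, and only the last is non-uniform). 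For the reduction step itself, I would use the canonical tree decomposition of $M$: if $M$ is $3$-connected and non-uniform with $|E(M)| > 4$, then $M$ has a non-uniform minor using $\{e,f\}$ on fewer elements (this is where a small case analysis or an appeal to a known "every $3$-connected matroid has a $3$-connected minor with a fixed pair of elements removed/kept" result is needed, combined with checking that uniformity cannot suddenly appear — one can argue that a $3$-connected matroid all of whose minors using $\{e,f\}$ are uniform must itself be uniform or very small). If $M$ is connected but not $3$-connected, then in its canonical tree decomposition $T$, either $\{e,f\}$ lies (as non-basepoints) in a single vertex $M_i$, in which case Lemma~\ref{path in tree} gives $M_i$ (or a minor of it obtained by contracting/deleting basepoints) as a minor of $M$ using $\{e,f\}$ and I recurse on $M_i$; or $e \in E(M_i)$ and $f \in E(M_j)$ for distinct vertices, in which case Lemma~\ref{path in tree} yields the $2$-sum of $M_i$ and $M_j$ (at the appropriate basepoints) as a minor of $M$ using $\{e,f\}$, and a $2$-sum of two matroids each with a distinguished non-basepoint contracts/deletes down to $M(\mathcal{W}_2)$ directly — indeed $U_{1,2} \oplus_2 U_{1,2}$ already using $\{e,f\}$ as its two non-basepoints.

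\emph{Main obstacle.} The delicate point is the $3$-connected case of the reduction: showing that a $3$-connected non-uniform matroid $M$ with more than four elements always has a proper minor that is non-uniform, connected, and still uses the prescribed pair $\{e,f\}$. Uniformity is not minor-closed in a helpful direction (minors of non-uniform matroids can be uniform), so one cannot just delete or contract arbitrarily; I would argue by contradiction, supposing every single-element deletion and contraction of $M$ that keeps $\{e,f\}$ is uniform, and deriving that $M$ itself must be uniform or have at most four elements, using the fact that a matroid all of whose proper minors (of a given corank/rank profile) are uniform is highly constrained — for instance via the excluded-minor-style observation that $U_{2,4}$ and $M(\mathcal{W}_2)$ are the minor-minimal non-uniform matroids, so $M$ non-uniform forces one of these as a minor, and one then routes this minor through $\{e,f\}$ using $3$-connectivity and Theorem~\ref{nonbinary 2 elts} in the $U_{2,4}$ case, handling the $M(\mathcal{W}_2)$ case separately by a direct tree-decomposition argument on the (necessarily non-$3$-connected) matroid obtained after one reduction. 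This bookkeeping — guaranteeing the pair $\{e,f\}$ survives while non-uniformity is retained — is the crux.
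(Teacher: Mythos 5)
The easy direction and the base case of your plan are fine ($M(\mathcal{W}_2)$ is indeed the unique $4$-element connected non-uniform matroid), but the heart of the theorem --- the reduction step that keeps $\{e,f\}$ while preserving connectivity and non-uniformity --- is never actually carried out: you yourself flag it as "the crux" and only sketch how one might attack it. Worse, the tools you propose for that step do not work. $U_{2,4}$ is uniform, so it is not a "minor-minimal non-uniform matroid" (the unique minor-minimal non-uniform matroid is $U_{0,1}\oplus U_{1,1}$), and a $U_{2,4}$-minor obtained via Theorem~\ref{nonbinary 2 elts} is useless here, since a uniform matroid has no $M(\mathcal{W}_2)$-minor at all. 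The fact you would really need --- that every connected non-uniform matroid has an $M(\mathcal{W}_2)$-minor --- is a weakening of the very theorem being proved, so appealing to it is circular.

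The non-$3$-connected branch also contains a concretely false step. If $e$ and $f$ lie in distinct vertices $M_i$ and $M_j$ of the canonical tree decomposition, Lemma~\ref{path in tree} does give the $2$-sum $M_i\oplus_2 M_j$ as a minor using $\{e,f\}$, but that $2$-sum need not "contract/delete down to $M(\mathcal{W}_2)$": if $M_i$ and $M_j$ are both triangles (non-adjacent circuit vertices are permitted in the canonical tree), their $2$-sum is $U_{3,4}$, which is uniform. Relatedly, your identification of $M(\mathcal{W}_2)$ is wrong: it is not $U_{1,2}\oplus_2 U_{1,2}$, it is not two triangles sharing a deleted edge (that graph is a $4$-cycle, giving $U_{3,4}$), and it does not have two disjoint $2$-circuits; it is $U_{2,3}$ with one element parallelized (equivalently $U_{1,3}\oplus_2 U_{2,3}$), with exactly one $2$-circuit and one $2$-cocircuit. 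For comparison, the paper's proof avoids tree decompositions altogether: it inducts on $|E(M)|$ with $x,y$ distinguished, removing an element $e\notin\{x,y\}$; if some $M/e$ (or dually $M\backslash e$) is disconnected it exploits the resulting parallel-connection structure directly, and otherwise it shows $M\backslash e$ and $M/e$ cannot both be uniform, because a circuit of $M$ through $e$ of size at most $r(M)$ contracts to a circuit of $M/e$ that is too short for a uniform matroid of rank $r(M)-1$. That argument is precisely what your sketch is missing.
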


\begin{proof}
If $M$ is $M(\mathcal{W}_2)$-connected, then it is clearly both connected and non-uniform. 
To prove the converse, suppose $M$ is connected and non-uniform. We argue by induction that $M$ is $M(\mathcal{W}_2)$-connected. This is immediate if $|E(M)| = 4$, since $M(\mathcal{W}_2)$ is the unique $4$-element connected, non-uniform matroid. Assume it holds for $|E(M)| < n$ and let $|E(M)| = n > 4$. Distinguish two elements $x$ and $y$ of $E(M)$. 
%Suppose $M$ is connected and non-uniform. If $|E(M)| = 4$, then $M \cong M(\mathcal{W}_2)$. Suppose by way of induction that $|E(M)| = n > 4$ and that the statement holds for smaller matroids. Distinguish two elements $x,y \in E(M)$. Suppose there is an element $e \notin \{x,y\}$ such that $M / e$ or $M \ba e$ is disconnected. (If $M \ba e$ is disconnected, we instead work with $M^*$ to obtain the same result.) Then $M$ is the parallel connection of $M_1$ and $M_2$ along $e$. If $x$ and $y$ are elements of the same $M_i$, then provided $x$ and $y$ are not in parallel, elements of $M$ can be contracted until $\{e,x,y\}$ is a circuit, and we achieve a $M(\mathcal{W}_2)$ minor containing $x$ and $y$ by contracting elements of $M_2$ until one is in parallel with $e$. Note that if $x$ and $y$ are in parallel in $M$, then we can find a $M(\mathcal{W}_2)$ minor by taking a larger circuit containing $x$ and contracting other elements from it until only $3$ elements remain.

%Suppose $M$ is connected and non-uniform. We argue by induction that $M$ is $M(\mathcal{W}_2)$-connected. This is immediate if $|E(M)| = 4$, since $M(\mathcal{W}_2)$ is the unique $4$-element connected, non-uniform matroid. Assume it holds for $|E(M)| < n$ and let $|E(M)| = n > 4$. Distinguish two elements $x$ and $y$ of $E(M)$. 

Suppose there is an element $e$ of $E(M) - \{x,y\}$ such that $M / e$ is disconnected. Then $M$ is the parallel connection, with basepoint $e$, of two matroids $M_1$ and $M_2$. Now $M \ba e$ is connected. We may assume that it is uniform; otherwise, by the induction assumption, $M \ba e$ and hence $M$ has an $M(\mathcal{W}_2)$-minor using $\{x,y\}$. Now $r(E(M_1) - e) + r(E(M_2) - e) - r(M \ba e) = 1$.
Suppose each of $|E(M_1) - e|$ and $|E(M_2) - e|$ has at least two elements. Then $M \ba e$ has a $2$-separation. Since $M \ba e$ is uniform, it follows that $M \ba e$ is a circuit or a cocircuit. In the latter case, $M$ is also a cocircuit; a contradiction. If $M \ba e$ is a circuit, then $M$ is the parallel connection of two circuits, and $M$ is easily seen to have an $M(\mathcal{W}_2)$-minor using $\{ x,y \}$.

Now suppose that $|E(M_1) - e| = 1$. Thus $M$ has a circuit, $\{ e,f \}$ say, containing $e$. As $M \ba e$ is uniform but $M$ is not, $r(M) \geq 2$, so $M \ba e$ has a circuit containing $\{f,x,y\}$. It follows that $M$ has an $M(\mathcal{W}_2)$-minor with ground set $\{e,f,x,y\}$.

We may now assume that $M / e$ is connected for all $e$ in $E(M) - \{x,y\}$. Moreover, by replacing $M$ with $M^*$ in the argument above, we may also assume that $M \ba e$ is connected for all such $e$. If $M \ba e$ or $M / e$ is non-uniform, then, by the induction assumption, $M$ has an $M(\mathcal{W}_2)$-minor using $\{x,y\}$. Thus both $M \ba e$ and $M / e$ are uniform. Let $r(M \ba e) = r$. Then every circuit of $M \ba e$ has $r+1$ elements. Since $M$ is not uniform, it has a circuit containing $e$ that has at most $r$ elements. Contracting $e$ from $M$ produces a rank-$(r-1)$ matroid having a circuit with at most $r-1$ elements. Since $M / e$ is uniform, this is a contradiction.
\end{proof}

We omit the straightforward proof of the next result.

\begin{lemma}\label{transitivity}
If $M$, $N$, and $N'$ are matroids such that $M$ is $N$-connected and $N$ is $N'$-connected, then $M$ is $N'$-connected.
\end{lemma}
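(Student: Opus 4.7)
The plan is straightforward and reduces to the transitivity of the minor relation combined with the fact that minor-taking is compatible with isomorphism. First I would fix two distinct elements $e,f$ of $E(M)$ and invoke $N$-connectivity of $M$ to obtain a minor $N_0$ of $M$ with $N_0 \cong N$ and $\{e,f\} \subseteq E(N_0)$. Fix any isomorphism $\phi \colon N_0 \to N$, and set $e' = \phi(e)$ and $f' = \phi(f)$; these are distinct elements of $E(N)$.

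Next, since $N$ is $N'$-connected, there is a minor $L$ of $N$ with $L \cong N'$ and $\{e',f'\} \subseteq E(L)$. Writing $L = N \ba D / C$ for disjoint $C,D \subseteq E(N) - \{e',f'\}$, the matroid $N_0 \ba \phi^{-1}(D) / \phi^{-1}(C)$ is isomorphic to $L$ via $\phi$ and therefore isomorphic to $N'$, and its ground set contains $\{e,f\}$. Since the minor relation is transitive, this matroid is a minor of $M$. Hence $M$ has an $N'$-minor using $\{e,f\}$, so $M$ is $N'$-connected.

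There is no real obstacle in this argument: the only point that needs care is the observation that applying an isomorphism $\phi$ commutes with the operations of deletion and contraction, so a minor of $N$ containing $\{e',f'\}$ transports back to a minor of $N_0$ containing $\{e,f\}$. The whole proof is essentially a diagram chase, so I would write it as a single short paragraph with no case analysis.
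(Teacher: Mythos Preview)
Your argument is correct and is exactly the intended one; the paper itself omits the proof, calling it ``straightforward.'' The only steps needed are those you identify---pass to an $N$-minor containing $\{e,f\}$, transport along an isomorphism to $N$, apply $N'$-connectivity there, and pull the resulting $N'$-minor back---so nothing further is required.
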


%\begin{proof}
%If $M$ is $N$ connected, then for every two elements $x, y \in E(M)$, there is an $N$-minor of $M$ containing $x$ and $y$. If $N$ is $N'$-connected, then $x$ and $y$ are also in an $N'$-minor of $N$, which is also a minor of $M$.
%\end{proof}

%\begin{proof}
%Suppose $M$ is simple and connected, that is, $U_{2,3}$-connected, and assume $|E(M)| \geq 5$.  Then $M \ba e$ is simple for all $e \in E(M)$. If $M \ba e$ is connected, we are done, so suppose $M \ba e$ is disconnected. This implies $M / e$ is connected.
%%Suppose $M$ is simple and connected, that is, $U_{2,3}$-connected. Then $M \ba e$ is simple for all $e \in E(M)$. If $M \ba e$ is connected, we are done, so suppose $M \ba e$ is disconnected. This implies $M$ is the series connection of two matroids, and $M / e$ is the $2$-sum of two matroids, so $M / e$ is not $3$-connected. If $M / e$ is uniform, then $M / e$ is a circuit or a cocircuit, and is therefore simple or cosimple. If $M / e$ is non-uniform, then by Theorem~\ref{2wheel}, $M / e$ is $M(\mathcal{W}_2)$-connected. A dual argument works if $M$ is $U_{1,3}$-connected.
%
%Suppose $M$ is $M(\mathcal{W}_2)$-connected. Then if one of $M \ba e$ or $M / e$ is connected and non-uniform, we are done. Suppose without loss of generality that $M \ba e$ is connected and uniform. Since $M \ba e \cong U_{r,n}$, where $n \geq 4$, it follows that $M \ba e$ is $U_{1,3}$-connected or $U_{2,3}$-connected.
%\end{proof}

If we wish to describe the class of $N$-connected matroids for a $3$-connected matroid $N$, it suffices to describe the $N$-connected matroids that are $3$-connected and then apply Theorem~\ref{general}. If $N$ is not $3$-connected, the task of describing $N$-connected matroids becomes harder, and we omit any attempt to provide a general theorem for $N$-connectivity in this case. We will instead give characterizations for two specific matroids that are not $3$-connected, namely $U_{1,4}$ and its dual $U_{3.4}$. We will use the following theorem of Oxley \cite{oxl1}.

%By using Theorem~\ref{general}, in order to describe the class of $N$-connected matroids for any $3$-connected matroid $N$, it will suffice to describe the $3$-connected matroids. If $N$ is not $3$-connected, we can first describe the $N$-connected matroids that are $3$-connected, and then use this characterization to describe $N$-connected matroids in general. We will illustrate this for $U_{1,4}$-connectivity, using a theorem from Oxley \cite{oxl1}.

\begin{theorem}%[Oxley 1987]
\label{rank and corank 3}
Let $M$ be a $3$-connected matroid having rank and corank at least three,  and suppose that $\{x,y,z\} \subseteq E(M)$. Then $M$ has a minor isomorphic to one of $U_{3,6}, P_6, Q_6, \mathcal{W}^3$, or $M(K_4)$ that uses $\{x,y,z\}$.
\end{theorem}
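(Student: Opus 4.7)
The plan is to prove this by induction on $|E(M)|$, in the spirit of Seymour's $t$-roundedness arguments. For the base case $|E(M)| = 6$, since $M$ is $3$-connected with $r(M), r^*(M) \geq 3$, we must have $r(M) = r^*(M) = 3$. An inspection of the $3$-connected rank-$3$ matroids on six elements shows that these are exactly $U_{3,6}$, $P_6$, $Q_6$, $\mathcal{W}^3$, and $M(K_4)$, so $M$ itself is the desired minor containing $\{x,y,z\}$.

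For the inductive step, assume $|E(M)| \geq 7$. The goal is to find an element $e \in E(M) - \{x,y,z\}$ such that one of $M \ba e$ or $M / e$ is $3$-connected and still has rank and corank at least three; the inductive hypothesis applied to that smaller minor then supplies the required minor using $\{x,y,z\}$. If $r(M) = 3$, then contraction of any element drops the rank below three, so only deletion is permitted; dually, if $r^*(M) = 3$ then only contraction is permitted; when both $r(M) \geq 4$ and $r^*(M) \geq 4$, either operation is allowed.

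The principal obstacle is producing such an $e$ avoiding $\{x,y,z\}$ while respecting the rank/corank constraint. By Tutte's wheels-and-whirls theorem, together with its quantitative strengthenings (which guarantee several elements whose deletion or contraction preserves $3$-connectivity when $M$ is neither a wheel nor a whirl), a $3$-connected matroid on at least seven elements that is not a wheel or whirl admits enough reducible elements to locate one of the correct type outside $\{x,y,z\}$. The remaining cases, in which $M$ is a wheel $M(\mathcal{W}_n)$ or a whirl $\mathcal{W}^n$ with $n \geq 4$, call for a direct argument: by contracting appropriate pairs of spokes and deleting appropriate rim elements, one exhibits an $M(K_4)$- or $\mathcal{W}^3$-minor containing any three prescribed elements. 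I expect the delicate point to lie here, since configurations in which two of $\{x,y,z\}$ occupy, say, a spoke together with an adjacent rim edge force a careful choice of which pairs to operate on; but the rigid structure of wheels and whirls makes the verification tractable by case analysis.
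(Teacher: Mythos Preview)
The paper does not prove this statement; it is cited from \cite{oxl1} and used as a black box, so there is no proof in the present paper to compare your attempt against.

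On the merits of your outline: the base case is correct, but the inductive step has a real gap. Tutte's wheels-and-whirls theorem produces only \emph{one} element $e$ with $M\backslash e$ or $M/e$ $3$-connected, and the ``quantitative strengthenings'' you invoke bound the number of elements for which \emph{some} single-element removal works, not the number for which a \emph{prescribed} operation works. That distinction is precisely the issue in the boundary cases $r(M)=3$ (deletion is forced) and $r^*(M)=3$ (contraction is forced): there you would need, for instance, that every $3$-connected rank-$3$ matroid on at least seven elements has at least four elements whose deletion remains $3$-connected (equivalently, at least four elements $e$ with $E(M)-e$ not the union of two lines), so that one of them avoids $\{x,y,z\}$. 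Your sketch neither states nor proves such a lemma. Even in the generic case $r(M),r^*(M)\ge 4$, the standard lower bounds on the number of removable elements in a non-wheel, non-whirl are not obviously large enough to guarantee one outside a prescribed triple. The wheel/whirl endgame you flag is, as you say, a tractable case analysis; what you have underestimated is the work required \emph{before} reaching it.
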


\begin{proposition} \label{U14}
A $3$-connected matroid $M$ is $U_{1,4}$-connected if and only if either $M \cong U_{2,n}$ for some $n \geq 5$, or $M$ has rank and corank at least three.
\end{proposition}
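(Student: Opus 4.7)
The plan is to prove the two implications separately; necessity is a short uniform-matroid calculation, and sufficiency, in the main case, uses the structural Theorem~\ref{rank and corank 3}.

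For the necessity direction, I suppose $M$ is $3$-connected and $U_{1,4}$-connected. Since $M$ has a $U_{1,4}$-minor, $|E(M)| \geq 4$, and $3$-connectivity then makes $M$ simple and cosimple, so $r(M), r^*(M) \geq 2$. If $r(M) = 2$, then $M \cong U_{2,n}$ for some $n \geq 4$; but $U_{2,n}/z \cong U_{1,n-1}$, and extracting $U_{1,4}$ from this by further deletion requires $n \geq 5$. If instead $r^*(M) = 2$, then $M \cong U_{n-2,n}$, every one of whose minors has corank at most $2$, contradicting the presence of a $U_{1,4}$-minor. Thus either $M \cong U_{2,n}$ for some $n \geq 5$, or $r(M), r^*(M) \geq 3$.

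For the sufficiency direction, the case $M \cong U_{2,n}$ with $n \geq 5$ is already handled: for distinct $e,f$, contract any $z \in E(M) - \{e,f\}$ to get $U_{1,n-1}$ and delete down to $U_{1,4}$ on a set containing $\{e,f\}$. Assume now $r(M), r^*(M) \geq 3$. The key reformulation I will use is that \emph{$M$ has a $U_{1,4}$-minor using $\{e,f\}$ if and only if $M$ has a cocircuit of size at least $4$ containing $\{e,f\}$}: for $(\Leftarrow)$, contracting a basis $I$ of the hyperplane $E(M) - C^*$ gives a rank-$1$ matroid whose non-loops are exactly $C^*$, all mutually parallel, so deleting down to four elements including $\{e,f\}$ yields $U_{1,4}$; for $(\Rightarrow)$, the unique cocircuit of a $U_{1,4}$-minor lifts, via the standard description of cocircuits of a minor, to a cocircuit of $M$ of at least the same size containing $\{e,f\}$. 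Given $e,f$ and any third $z$, Theorem~\ref{rank and corank 3} produces a minor $N$ of $M$ using $\{e,f,z\}$ with $N \in \{U_{3,6}, P_6, Q_6, \mathcal{W}^3, M(K_4)\}$. By the equivalence, it suffices to exhibit, in each of these five matroids, a cocircuit of size $\geq 4$ containing $\{e,f\}$.

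Each $N$ is a $3$-connected simple rank-$3$ matroid on six elements; to avoid a $2$-separation, it has no $4$-element line, so its cocircuits of size $\geq 4$ are exactly the complements of its $2$-element rank-$2$ flats, or ``$2$-lines''. Writing $t$ for the number of $3$-element lines of $N$, it has $15-3t$ two-lines, with $t \in \{0,1,2,3,4\}$ across the five matroids. The main obstacle is the tightest case, $N \cong M(K_4)$, in which the three $2$-lines are precisely the three perfect matchings of $K_4$; each edge lies in exactly one matching, so at most two matchings meet $\{e,f\}$ and at least one is disjoint. The four remaining matroids have strictly more $2$-lines, and each element lies in at most two $3$-lines, so a direct inclusion-exclusion count on the $2$-lines through $e$ or through $f$ always leaves at least one $2$-line available in $E(N) - \{e,f\}$.
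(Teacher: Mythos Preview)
Your proof is correct and follows essentially the same approach as the paper: both directions match, and the sufficiency argument in the main case uses Theorem~\ref{rank and corank 3} to reduce to the five six-element matroids, then verifies each is $U_{1,4}$-connected. The paper simply asserts ``one easily checks that each member of $\mathcal{N}$ is $U_{1,4}$-connected'' and invokes Lemma~\ref{transitivity}, whereas you carry out that check explicitly via the cocircuit reformulation (a $U_{1,4}$-minor through $\{e,f\}$ is equivalent to a cocircuit of size at least four through $\{e,f\}$) and a count of two-point lines; this is a perfectly good way to unpack the omitted verification.
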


\begin{proof}
Clearly if $n \geq 5$, then $U_{2,n}$ is $U_{1,4}$-connected. Now assume that $r(M) \geq 3$ and $r^*(M) \geq 3$. Suppose $\{x,y\} \subseteq E(M)$. Then, by Theorem~\ref{rank and corank 3}, $M$ has an $\mathcal{N}$-minor using $\{x,y\}$ where $\mathcal{N}$ is  $\{U_{3,6}, P_6, Q_6, \mathcal{W}^3, M(K_4) \}$. One easily checks that each member of $\mathcal{N}$ is $U_{1,4}$-connected. Hence, by Lemma~\ref{transitivity}, $M$ is $U_{1,4}$-connected.

To prove the converse, assume that $M$ is $U_{1,4}$-connected. Since $r^*(U_{1,4})=3$, it follows that $r^*(M) \geq 3$. The required result holds if $r(M) \geq 3$. But, since $M$ is $3$-connected and $U_{1,4}$-connected, $r(M) \geq 2$. Moreover, if $r(M) = 2$, then $M \cong U_{2,n}$ for some $n \geq 5$.
\end{proof}

%\begin{proof}
%First we consider the cases where a matroid has rank $1$ or $2$. It is easily verified that $U_{2,n}$ is $U_{1,4}$-connected for all $n \geq 5$. 
%Observe that a rank-$1$ matroid with a $U_{1,4}$-minor is not $3$-connected, and that if a rank-$2$ matroid $M$ with a $U_{1,4}$-minor is not isomorphic to $U_{2,n}$ where $n \geq 5$, then $M$ has a circuit of size at most two and is therefore not $3$-connected. Thus we may assume $r(M) \geq 3$ and apply Theorem~\ref{rank and corank 3}, noting that if $M$ has a $U_{1,4}$-minor, then $r^*(M) \geq 3$. 
%
%Let $\mathcal{N} = \{U_{3,6}, P_6, Q_6, \mathcal{W}^3,M(K_4)\}$. A matroid $M$ having a member of $\mathcal{N}$ as a minor necessarily has rank and corank at least $3$. Since every member of $\mathcal{N}$ is $3$-connected, by Theorem~\ref{rank and corank 3}, the class $\mathcal{N}$ is $2$-rounded. We can conclude that if $M$ has an $\mathcal{N}$-minor, then every for pair of elements $e,f \in E(M)$, there is an $\mathcal{N}$-minor of $M$ using $\{e,f\}$, that is, $M$ is $\mathcal{N}$-connected.
%
%It can be verified directly that every member of $\mathcal{N}$ is $U_{1,4}$-connected. Therefore, by Lemma~\ref{transitivity}, we have that if $M$ is $\mathcal{N}$-connected, then $M$ is $U_{1,4}$-connected. Thus, for a $3$-connected matroid $M$ to be $U_{1,4}$-connected, it is sufficient for $M$ to have rank and corank at least $3$.
%\end{proof}

Duality gives a corresponding result for $U_{3,4}$-connectivity.

\begin{corollary} \label{U34}
A $3$-connected matroid $M$ is $U_{3,4}$-connected if and only if either $M \cong U_{n-2,n}$ where $n \geq 5$, or $M$ has rank and corank at least $3$.
\end{corollary}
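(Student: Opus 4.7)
The plan is to derive Corollary~\ref{U34} directly from Proposition~\ref{U14} via matroid duality. First I would record the key duality observation about $N$-connectivity: a matroid $M$ is $N$-connected if and only if $M^*$ is $N^*$-connected. This holds because minors behave well under duality, namely $(M \ba A / B)^* = M^* / A \ba B$, so every $N$-minor of $M$ using a pair $\{e,f\}$ corresponds to an $N^*$-minor of $M^*$ using the same pair $\{e,f\}$, and vice versa. Combined with the fact that $3$-connectivity is self-dual, this lets me move the entire hypothesis and conclusion of Proposition~\ref{U14} to the dual side.

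Next I would apply this principle with $N = U_{3,4}$, whose dual is $U_{1,4}$. Thus a $3$-connected matroid $M$ is $U_{3,4}$-connected if and only if the $3$-connected matroid $M^*$ is $U_{1,4}$-connected. By Proposition~\ref{U14}, this happens precisely when either $M^* \cong U_{2,n}$ for some $n \geq 5$, or $M^*$ has rank and corank at least $3$.

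Finally, I would translate these two conditions back to $M$. Since $(U_{2,n})^* = U_{n-2,n}$, the first condition is equivalent to $M \cong U_{n-2,n}$ for some $n \geq 5$. The second condition, $r(M^*) \geq 3$ and $r^*(M^*) \geq 3$, is simply $r^*(M) \geq 3$ and $r(M) \geq 3$, which is the condition stated in the corollary. This completes the proof.

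There is no real obstacle here: the only content beyond quoting Proposition~\ref{U14} is the one-line verification that $N$-connectivity is preserved under simultaneously dualizing $M$ and $N$, which is immediate from the definition of a minor and the standard duality relation between deletion and contraction.
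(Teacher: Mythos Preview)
Your proposal is correct and matches the paper's approach exactly: the paper simply records that ``Duality gives a corresponding result for $U_{3,4}$-connectivity,'' and your argument spells out precisely that duality step.
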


Observe that this fails to fully characterize $U_{3,4}$-connectivity for if we let $M = M(K_{2,3})$, then $M$ is $U_{3,4}$-connected but none of the matroids in its canonical tree decomposition is $U_{3,4}$-connected. We can instead describe $U_{3,4}$-connectivity in terms of forbidden configurations of matroids in the canonical tree decomposition.

%We state the following facts as lemmas because they are used extensively in our proof of Proposition \ref{U34+}; we omit their proofs.
%
%\begin{lemma}\label{in parallel}
%If $M$ is the parallel connection of $M_1$ and $M_2$ along element $p$, then every circuit of $M$ containing $p$ must have elements from either $M_1$ or $M_2$, but not both.
%\end{lemma}
%
%%\begin{proof}
%%Let $M$ be the parallel connection of $M_1$ and $M_2$ along element $p$. By \cite[Proposition 7.1.15]{oxl2}, $M / p = (M_1 / p) \oplus (M_2 / p)$. If $C$ is a circuit of $M$ containing $p$, then $C / p$ is a circuit of $M / p$, which clearly must have elements from either $E(M_1) - p$ or $E(M_2) - p$, but not both.
%%\end{proof}
%
%In the canonical tree decomposition $T$ of a matroid $M$, parallel connection of matroids $M_1$ and $M_2$ along an element $p \in E(M)$ is represented by a vertex $M_3$ adjacent to $M_1$ and $M_2$ by edges $p_1$ and $p_2$, respectively, where $M_3$ is isomorphic to a cocircuit, and $\{p,p_1,p_2\} \subseteq E(M_3)$. 
%
%\begin{lemma}\label{building U34}
%Let $M$ be the $2$-sum of $M_1$ and $M_2$ with basepoint $p$. Let $C_1$ and $C_2$ be circuits of $M_1$ and $M_2$ respectively that both contain $p$. Then the $(C_1 - p) \cup (C_2 - p)$ is a circuit of $M$.
%\end{lemma}

%\begin{proof}
%Since $C_1$ and $C_2$ are minors of $M_1$ and $M_2$ respectively and they both use $p$, by Lemma~\ref{2-sum minor}, their $2$-sum is a minor of $M$.
%\end{proof}

\begin{proposition}\label{U34+}
Suppose $M$ is not $3$-connected. Then $M$ is $U_{3,4}$-connected if and only if $M$ is connected and simple, and, in the canonical tree decomposition $T$ of $M$, there is no vertex of degree at most two   that is labeled by some $U_{2,n}$ such that its only neighbors in $T$ are cocircuits that use elements of $E(M)$.
\end{proposition}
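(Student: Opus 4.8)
The plan is to run both implications inside the canonical tree decomposition $T$ of $M$, using the three‑element/low‑rank facts already established.

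\emph{Necessity.} Suppose $M$ is $U_{3,4}$-connected. Since $U_{3,4}$ is connected, and since a loop of $M$, or both members of a parallel pair of $M$, must persist (as a loop, resp.\ a parallel pair) in every minor that retains them, $M$ is connected and simple. For the tree‑decomposition condition I would argue its contrapositive: assume $T$ has a vertex $v$ labelled $U_{2,n}$, of degree at most two, all of whose neighbours are cocircuit vertices meeting $E(M)$. Simplicity forces each such neighbour $U_{1,k_i}$ to have exactly one non‑basepoint $g_i$ in $E(M)$, and in $U_{1,k_i}$ this $g_i$ is parallel to the basepoint $p_i$ joining it to $v$. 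Deleting from $M$ everything strictly beyond those cocircuits and shrinking each to the two‑element matroid on $\{p_i,g_i\}$, which by Lemma~\ref{replace_basepoint} only relabels the basepoint $p_i$ of $v$ by $g_i$, exhibits a restriction of $M$ isomorphic to $U_{2,n}$ on the non‑basepoints of $v$ together with the $g_i$; hence any two of those elements lie in a common triangle of $M$. Now fix a non‑basepoint $e$ of $v$ (it exists since $n\ge 3$) and take the pair $\{g_1,g_2\}$ if $v$ has degree two, and $\{e,g_1\}$ if $v$ has degree one. In any minor $N_0\cong U_{3,4}$ retaining the chosen pair, every non‑basepoint of $v$ must be deleted, for keeping one would give a dependent triple in $N_0$ while contracting one would make the chosen pair a circuit of $N_0$; but deleting all non‑basepoints of $v$ from $M$ replaces $v$ by a matroid of coloops, turning the $2$‑sum at $v$ into a direct sum and so separating $g_1$ from $g_2$ (degree two) or making $e$ a coloop (degree one). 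Either outcome contradicts $N_0\cong U_{3,4}$, which is connected and coloopless.

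\emph{Sufficiency.} Suppose $M$ is connected, simple, not $3$-connected, and no vertex of $T$ is forbidden; fix distinct $e,f\in E(M)$, let $M_e,M_f$ be the vertices having them as non‑basepoints, and let $P$ be the $M_e$–$M_f$ path in $T$. The strategy is to produce a minor of $M$ that uses $\{e,f\}$ and is $U_{3,4}$-connected, and then invoke Lemma~\ref{transitivity}. By Corollary~\ref{U34} a $3$-connected matroid of rank and corank at least three is $U_{3,4}$-connected, as is $U_{m-2,m}$ for $m\ge 5$; and a circuit $U_{k-1,k}$ with $k\ge 4$ is $U_{3,4}$-connected directly, since contracting $k-4$ of its elements leaves $U_{3,4}$. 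If $P$ meets a vertex $M_0$ of one of these kinds, the specially relabelled $M_0$-minor of $M$ obtained from Lemma~\ref{replace_basepoint} — relabelling the basepoint of $M_0$ toward $M_e$ by $e$ and toward $M_f$ by $f$ — is such a minor. Otherwise every vertex of $P$ is a line $U_{2,m}$, a triangle, or a cocircuit; when $M_e$ and $M_f$ are both triangles, Lemma~\ref{path in tree} already realises $U_{2,3}\oplus_2 U_{2,3}\cong U_{3,4}$ as a minor using $\{e,f\}$. The residual case, in which a line $U_{2,m}$ or a cocircuit $U_{1,k}$ sits at an end of $P$, is where the hypothesis bites: a non‑forbidden line vertex has degree at least three or a neighbour that is not a cocircuit meeting $E(M)$, while in a simple matroid every cocircuit vertex has degree at least two with all neighbours non‑cocircuits; traversing $T$ through these one reaches a circuit $U_{k-1,k}$ with $k\ge 4$ or a rich $3$-connected vertex, or else obtains two independent ``exits'' from the rank‑$2$ region around $e$ and around $f$, from which, using Lemma~\ref{path in tree} to realise the relevant $2$-sums as minors, one assembles a $U_{3,4}$-minor through $\{e,f\}$ (a $2$-sum of a triangle through $e$ with a triangle through $f$, or a contraction of a suitable long circuit).

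I expect the main obstacle to be this last part of the sufficiency argument: the bookkeeping of the low‑rank vertices ($U_{2,m}$, triangles, cocircuits) along and around $P$, and checking in each configuration that excluding the forbidden line vertex really does supply the two independent directions needed to close a rank‑$3$ four‑element circuit through both $e$ and $f$. A pervasive secondary subtlety is that $U_{3,4}$ is not $3$-connected, so Lemma~\ref{replace_basepoint2} cannot be applied with $N=U_{3,4}$; one must route through a genuinely $3$-connected (or circuit) intermediate minor and only then apply Corollary~\ref{U34} together with Lemma~\ref{transitivity}.
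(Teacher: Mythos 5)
Your necessity argument is correct and amounts to the same obstruction the paper uses: after the forced deletions you show the chosen pair either gets separated or acquires a coloop, which is just another way of saying that every circuit of $M$ through that pair has at most three elements, so no $U_{3,4}$-minor can use it. That half is fine (modulo the small slip that in the degree-one case the non-basepoint $e$ itself is of course retained, not deleted).

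The genuine gap is in sufficiency, precisely where you flagged it, and it is not mere bookkeeping: the residual case is the only place the forbidden-vertex hypothesis is used, and your treatment of it ("traversing $T$ through these one reaches \dots or else obtains two independent exits \dots one assembles a $U_{3,4}$-minor") is a sketch with no checkable steps. What is needed, and what the paper supplies, is a splicing dichotomy: if $\{e,f\}$ lies in no $U_{3,4}$-minor, then for every edge $p$ of the path joining the vertices containing $e$ and $f$, the $2$-sum $N_{1p}\oplus_2 N_{2p}$ induced by $p$ would yield a circuit of $M$ through $\{e,f\}$ of size at least four (hence a $U_{3,4}$-restriction of a minor) unless $e$ or $f$ is parallel to $p$ in its side; the size-three circuit through $p$ on the far side is guaranteed because simplicity of $M$ forbids cocircuit leaf vertices. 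This forces the end vertices containing $e$ and $f$ to be cocircuits and, since adjacent cocircuits are excluded, forces the path to have at most two edges. In the two-edge case the middle vertex must be some $U_{2,n}$ whose two neighbours are cocircuits containing $e$ and $f$, so your hypothesis gives it a third incident edge $q$, and splicing the circuit through $\{e,f,q\}$ with a circuit of size at least three through $q$ on the other side produces the required $\geq 4$-element circuit; in the one-edge case one passes to the specially relabelled minor $N_{2p_1}(e)$ and reruns the same-vertex argument there (checking it is simple and that its tree is not a single forbidden line). None of this appears in your proposal, and your one concrete step in the residual region, the "both ends triangles" case via Lemma~\ref{path in tree}, needs the two vertices to be distinct, so even the case of $e$ and $f$ lying in one common triangle or one common line vertex is left unproved. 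Your closing observation that Lemma~\ref{replace_basepoint2} cannot be invoked with $N=U_{3,4}$ is correct; the paper's circuit-splicing argument is exactly the substitute for it, and it is the missing core of your sufficiency direction.
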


\begin{proof}
Let $T$ be the canonical tree decomposition of $M$. Assume $M$ is $U_{3,4}$-connected. Then, by Lemma~\ref{transitivity},  $M$ is $U_{2,3}$-connected, so $M$ is connected and simple. Suppose that $T$ has a vertex $M_0$ whose degree $d$ is at most two such that $M_0$ is labeled by some $U_{2,n}$ and has its only neighbors $M_1,\dots, M_d$ labeled by cocircuits that use elements of $E(M)$. For each $i$ in $\{1,\dots,d\}$, suppose $f_i \in E(M_i) \cap E(M)$. Then $M$ can be obtained from a copy of $U_{2,n}$ using $\{f_1,\dots,f_d\}$ by, for each $i$,  adjoining some  matroid via parallel connection across the basepoint $f_i$. If $ d = 1$, let $f_2$ be an element of $M_0$ other than $f_1$. Clearly $M$ has no circuit using $\{f_1,f_2\}$ that has more than three elements.

%labeled by $U_{2,n}$. This vertex must have a non-basepoint $e$, so if the only neighbor of this $U_{2,n}$ is a cocircuit with a non-basepoint $f$, then there is no $U_{3,4}$-minor using $\{e,f\}$. If the only neighbors of $U_{2,n}$ are cocircuits that use elements $f,g$ of $E(M)$, respectively, then there is no $U_{3,4}$-minor of $M$ using $\{f,g\}$.

Now assume that $M$ is connected and simple and that $T$ satisfies the specified conditions. Let $\{e,f\}$ be a subset of  $E(M)$ that is not contained in a $U_{3,4}$-minor.  Assume first that $e$ and $f$ belong to the same vertex $M_1$ of $T$. As $M$ is simple, $M_1$ is not a cocircuit. Now $M$ has a specially relabeled $M_1$-minor using $\{e,f\}$. Thus, by Corollary~\ref{U34},  $M_1 \cong U_{2,n}$ for some $n \ge 3$. Let $p$ be an edge of $T$ that meets $M_1$. Consider the $2$-sum $N_1 \oplus_2 N_2$ induced by $p$ where $\{e,f\} \subseteq E(N_1)$. Certainly $N_1$ has a circuit containing $\{e,f,p\}$, and $N_2$ has a circuit of size at least three containing $p$. Thus $M$ has a $U_{3,4}$-minor containing $\{e,f\}$; a contradiction.

We now know that $e$ and $f$ belong to distinct vertices $M_1$ and $M_2$ of $T$. Each edge $p$ of the path $P$ in $T$ joining $M_1$ and $M_2$ induces a 2-sum decomposition of $M$ into two matroids, $N_{1p}$ and $N_{2p}$. Moreover, an element $x_i$ of $E(N_{ip})$ is in a circuit of $N_{ip}$ of size at least three containing $p$ unless $x_i$ is parallel to $p$ in $N_{ip}$. Thus $e$ or $f$ is parallel to $p$ in $N_{1p}$ or $N_{2p}$, respectively. Let the edges of $P$, in order, be $p_1,p_2,\dots,p_k$ where $p_1$ meets $M_1$. We may assume that $e$ is parallel to $p_1$ in $N_{1p_1}$. Then the vertex $M_1$ of $T$ containing $e$ is a cocircuit.

Suppose $k \ge 3$. As no two adjacent vertices of $T$ are cocircuits, neither $e$ nor $f$ is parallel to $p_2$ in $N_{1p_2}$ or $N_{2p_2}$. Hence $M$ has a $U_{3,4}$-minor using $\{e,f\}$. This contradiction implies  that $k \in \{1,2\}$. Suppose $k = 2$. Then  $f$ is parallel to $p_2$ in $N_{2p_2}$. Thus $M_2$ is a cocircuit. Since $M$ has no $U_{3,4}$-minor using $\{e,f\}$, the vertex $M_3$ of $T$ that is adjacent to both $M_1$ and $M_2$ is  isomorphic to some $U_{2,n}$. By assumption, $M_3$ must have another neighbor in $T$ to which it is joined by the edge $q$, say. Then, for the 2-sum decomposition $Q_1 \oplus_2 Q_2$ of $M$ induced by $q$, there is a circuit of $Q_1$ containing $\{e,f,q\}$ and a circuit of $Q_2$ of size at least three containing $q$. Thus $M$ has a $U_{3,4}$-minor using $\{e,f\}$. This contradiction implies that $k = 1$. Then $M = N_{1p_1} \oplus_2 N_{2p_1}$. Thus the specially relabeled minor $N_{2p_1}(e)$ uses $\{e,f\}$. Now the canonical tree decomposition $T'$ of $N_{2p_1}(e)$ can be obtained from the component of $T\ba p_1$ using $N_{2p_1}$ by replacing $M_2$ by $M_2(e)$. As $e$ and $f$ are contained in the same vertex of $T'$, we deduce from the second paragraph that $N_{2p_1}(e)$, and hence $M$, has a $U_{3,4}$-minor using $\{e,f\}$; a contradiction.
\end{proof}

\section{Disconnected matroids}
\label{disconnected}

We now turn our attention to $N$-connectivity where $N$ is disconnected. The following is essentially immediate. 

%First we give a result for $U_{n,n}$-connectivity, which implies a dual result for $U_{0,n}$-connectivity. We omit the straightforward proof.

\begin{proposition}\label{U_nn_basis}
Let $n$ be an integer exceeding one. A matroid $M$ is $U_{n,n}$-connected if and only if $M$ is simple with rank at least $n$.
\end{proposition}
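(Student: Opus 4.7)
The plan is to reduce the problem to a statement purely about independent sets of $M$. The key observation is that $U_{n,n}$ is just $n$ coloops, so a minor $M/C \backslash D$ with ground set $I$ is isomorphic to $U_{n,n}$ if and only if $I$ is independent in $M/C$. This happens iff $I \cup B$ is independent in $M$ for some basis $B$ of $M|C$, which in turn happens iff $I$ itself is independent in $M$ (we can simply delete $C$ instead of contracting it). Thus I would first prove the intermediate statement: $M$ has a $U_{n,n}$-minor using $\{e,f\}$ if and only if $\{e,f\}$ is contained in an independent set of $M$ of size $n$. The ``if'' direction is witnessed by the restriction $M|I$; the ``only if'' direction uses the fact that an independent set in a contraction lifts to an independent set in $M$ of at least the same size, from which we extract a size-$n$ subset containing $\{e,f\}$.

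With this reduction in hand, the proposition becomes straightforward. For the forward direction, assume $M$ is $U_{n,n}$-connected. Then for every pair $\{e,f\}$, the set $\{e,f\}$ is independent in $M$, so $M$ has no loops and no parallel pairs, i.e., $M$ is simple; moreover, the existence of an independent set of size $n$ forces $r(M) \geq n$. For the converse, assume $M$ is simple with $r(M) \geq n$. Then for any two distinct elements $e,f$, simplicity guarantees that $\{e,f\}$ is independent, so it extends to a basis of $M$, which has size $r(M) \geq n$; truncating this basis gives an independent set of size exactly $n$ containing $\{e,f\}$, and the corresponding restriction of $M$ is a $U_{n,n}$-minor using $\{e,f\}$.

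There is essentially no obstacle here, which is consistent with the paper's remark that the result is ``essentially immediate''; the only mild subtlety is making sure the equivalence between ``$U_{n,n}$-minor using $\{e,f\}$'' and ``$\{e,f\}$ lies in an independent $n$-set'' is stated cleanly, and in particular noting that contraction is never needed to exhibit such a minor.
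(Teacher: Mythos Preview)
Your argument is correct and is exactly the natural unpacking of why the paper calls this ``essentially immediate''; the paper gives no proof, and your reduction to ``$\{e,f\}$ lies in an independent $n$-set'' is precisely the one-line observation behind the statement.
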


%\begin{proof}
%Suppose $M$ is simple with $r(M) \geq n$. Let $\{x,y\} \subseteq E(M)$. Since $M$ is simple, the set $\{x,y\}$ is independent and can therefore be augmented to a basis, any $n$-element subset of which is isomorphic to $U_{n,n}$. Conversely, if $M$ is not simple, then it has a two-element dependent set. Moreover, if $M$ does not have rank at least $n$, then $M$ has no $U_{n,n}$-minor.
%\end{proof}

%Let $M$ be a matroid. 
%Recall that the connectivity function $\lambda_M$ for $M$ is defined for all subsets $X$ of $E(M)$ by $\lambda_M(X) = r(x) + r(E(M) - X) - r(M)$. 
%A pair $(X, E(M)-X)$ is a {\it $k$-separation} if $\lambda_M(X) < k$ and $\min\{ |X|, |E(M)-X| \} \geq k$. A matroid is $3$-connected if it has no $1$- or $2$-separations. 
%It is easily deduced (for instance, in \cite{sey}) that if $N$ is a minor of $M$ and $X \subseteq E(N)$, then $\lambda_N(X) \leq \lambda_M(X)$.
%
Recall that elements $x$ and $y$ of a matroid $M$ are {\it clones} if the bijection on $E(M)$ that interchanges $x$ and $y$ but fixes every other element yields the same matroid.
Next we prove Theorem~\ref{loop and coloop}, showing that a matroid is $U_{0,1} \oplus U_{1,1}$-connected if and only if no element has a clone. The proof will use the well-known fact (see, for example, \cite{bs}) that two elements in a matroid are clones if and only if they are in precisely the same  cyclic flats.

\begin{proof}[Proof of Theorem~\ref{loop and coloop}]

Suppose every clonal class of $M$ is trivial and let $x$ and $y$ be distinct elements of $M$. Then $M$ has a cyclic flat $F$ that contains exactly one of $x$ and $y$, say $x$. In $M / (F - x)$, the element $x$ is a loop but $y$ is not. Thus $M$ has a $U_{0,1} \oplus U_{1,1}$-minor using $\{x,y\}$, so $M$ is $U_{0,1} \oplus U_{1,1}$-connected.

Conversely, assume $M$ is $U_{0,1} \oplus U_{1,1}$-connected, but $M$ has elements $x$ and $y$ that are in the same cyclic flats. Suppose that $M / C \ba D \cong U_{0,1} \oplus U_{1,1}$ and $E(M / C \ba D) = \{x,y\}$. Let $x$ be the loop of $M / C \ba D$. Then $x \in \mathrm{cl}_M(C)$. Thus $y \in \mathrm{cl}_M(C)$, so $y$ is a loop in $M / C \ba D$; a contradiction.
\end{proof}

Recall, for the next result, that an element is {\it free} in a matroid if it is not a coloop and every circuit that contains it is spanning.

\begin{theorem}
A matroid $M$ is $U_{1,2} \oplus U_{1,1}$-connected if and only if $M$ is loopless, has at most one coloop, and has at most one free element.
\end{theorem}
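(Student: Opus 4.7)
The proof has two directions. For \emph{necessity}, a loop of $M$ persists in every minor containing it, so $M$ must be loopless since $U_{1,2} \oplus U_{1,1}$ has none; similarly, two coloops of $M$ stay coloops in any joint minor, yet $U_{1,2} \oplus U_{1,1}$ has just one coloop. Finally, if $x$ and $y$ are both free, then each lies only in the top cyclic flat and so they are clones; since the clone relation descends to every minor containing them, in any $U_{1,2} \oplus U_{1,1}$-minor $N = M/C \backslash D$ using $\{x, y\}$ they must form the parallel pair. The circuit $\{x, y\}$ of $N$ lifts to a circuit $K$ of $M$ with $x \in K \subseteq \{x, y\} \cup C$, and since $x$ is free, $K$ spans $M$, whence $r_M(\{x, y\} \cup C) = r(M)$ and $r(C) = r(M) - 1$. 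But the coloop $g$ of $N$ forces $r_M(\{x, y, g\} \cup C) = r(C) + 2 = r(M) + 1 > r(M)$, a contradiction.

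For \emph{sufficiency}, given distinct $e, f \in E(M)$, I will construct a circuit $K$ and a cocircuit $L$ of $M$, disjoint from each other, together with a third element $g \in E(M) \setminus \{e, f\}$, such that some pair $\{x, y\} \subseteq \{e, f, g\}$ lies inside $K$ and the remaining element lies inside $L$. Taking $C = K \setminus \{x, y\}$ and $D = E(M) \setminus (K \cup \{g\})$, a direct rank computation verifies that $M/C \backslash D$ has ground set $\{e, f, g\}$, with $\{x, y\}$ a parallel pair and the remaining element a coloop, so $M/C \backslash D \cong U_{1,2} \oplus U_{1,1}$. Two equivalences guide the construction: $E(M) \setminus K$ contains a cocircuit iff $K$ is non-spanning, and a cocircuit through a prescribed element $z$ disjoint from $K$ exists iff $z \notin \mathrm{cl}(K)$.

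If $e$ and $f$ lie in different components of $M$, then at most one, say $e$, is a coloop; then $e$'s component contains some $g \neq e$ and a circuit $K$ through $\{e, g\}$, while any cocircuit $L$ through $f$ lies in $f$'s component and is automatically disjoint from $K$. If $e, f$ share a component and some non-spanning circuit $K$ through $\{e, f\}$ exists, then $E(M) \setminus K$ contains a cocircuit $L$ and any $g \in L$ works. Suppose every circuit through $\{e, f\}$ is spanning. If one of $e, f$, say $e$, is free, then $e$ lies in no proper cyclic flat, so for any non-spanning circuit $K_f$ through $f$ (necessarily avoiding $e$ by the spanning assumption), $e \notin \mathrm{cl}(K_f)$, yielding the required cocircuit through $e$. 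Otherwise neither is free, and if they are not clones, some proper cyclic flat $F$ contains $e$ and not $f$ (or vice versa); then any circuit $K \subseteq F$ through $e$ is non-spanning in $M$ with $\mathrm{cl}(K) \subseteq F \not\ni f$, giving a cocircuit through $f$ disjoint from $K$.

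The remaining subcase is that $e, f$ are clones lying in the same component of $M$ while every circuit through both is spanning; I will argue this is impossible. Freeness is a clone-invariant (it depends only on cyclic-flat membership), and since $M$ has at most one free element, neither $e$ nor $f$ is free. Hence some non-spanning circuit $K_e$ contains $e$, and $F := \mathrm{cl}(K_e)$ is a proper cyclic flat containing $e$, hence also $f$. In $M|F$ the elements $e$ and $f$ remain clones; since $M$ is loopless and $F$ is cyclic (so $M|F$ has no coloops), every component of $M|F$ has at least two elements. If $e$ and $f$ were in different components of $M|F$, then for any circuit of $M|F$ through $e$ (which must contain a second element of $e$'s component), the clone swap would carry it to a set meeting both $e$'s and $f$'s components, contradicting the direct-sum structure. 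Hence $e$ and $f$ share a component of $M|F$, giving a circuit through both contained in $F$ and therefore non-spanning, contradicting our case assumption.
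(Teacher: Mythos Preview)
Your argument is essentially correct and follows the same skeleton as the paper's, but there are two presentational slips and one place where the paper is noticeably leaner.

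\textbf{Slips.} In the disconnected case you write ``at most one, say $e$, is a coloop'' and then immediately use that $e$'s component has a second element; you meant to let $e$ denote a \emph{non}-coloop. More seriously, your deletion set $D=E(M)\setminus(K\cup\{g\})$ only yields ground set $\{e,f,g\}$ when the element left over in $L$ happens to be $g$; in your Cases~1,~3,~4 the leftover element is $e$ or $f$, and your formula actually deletes it. The intended set is $D=E(M)\setminus\bigl(K\cup\{\text{remaining element}\}\bigr)$, equivalently $E(M)\setminus(C\cup\{e,f,g\})$; with that correction your rank/cocircuit verification goes through.

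\textbf{Comparison.} The paper dispatches necessity in one word (``Clearly''), whereas you supply a genuine argument for the free-element bound via clones and a rank contradiction; yours is more informative, theirs shorter. In the sufficiency direction the real difference is in the final subcase (same component, every circuit through $\{e,f\}$ spanning, neither element free). The paper takes \emph{any} non-spanning circuit $C$ through $e$ and observes that $M|\mathrm{cl}(C)$ is connected (each element of $\mathrm{cl}(C)\setminus C$ lies in a circuit meeting $C$), so $f\in\mathrm{cl}(C)$ would give a non-spanning circuit through $\{e,f\}$; hence $f\notin\mathrm{cl}(C)$ and one proceeds directly. This single connectivity observation replaces both your ``not clones'' cyclic-flat construction and your entire ``clones'' impossibility paragraph. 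Your route is valid---and the fact that $\mathrm{cl}(K_e)$ is automatically a cyclic flat, which you use, is correct---but the paper's uniform treatment avoids the dichotomy altogether.
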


\begin{proof}
Clearly if $M$ is $U_{1,2} \oplus U_{1,1}$-connected, then it obeys the specified conditions. %then a loop cannot belong to a $U_{1,2} \oplus U_{1,1}$-minor, nor can two coloops, and if $M$ has two free elements, one of them must be placed in $U_{1,2}$, which means contracting $M$ to rank $1$.
Conversely, suppose $M$ is loopless, has at most one coloop, and has at most one free element. 
%The result is immediate if $M$ is disconnected, so suppose $M$ is connected. 
Let $e$ and $f$ be elements of $M$. Suppose first that $M$ is disconnected. If $e$ and $f$ are in the same component, then they are in a $U_{1,2}$-minor of that component, so $M$ has a $U_{1,2} \oplus U_{1.1}$-minor  using $\{e,f\}$. If $e$ and $f$ are in different components, then one of these components is not a coloop. That component has a $U_{1,2}$-minor using $e$ or $f$. It follows that $M$ has a $U_{1,2} \oplus U_{1,1}$-minor  using $\{e,f\}$.

Now suppose $M$ is connected. Suppose  that $e$ is free in $M$. Then $f$ is in some non-spanning circuit, $C_f$. Choose $g$ in $C_f - f$. Contracting $C_f - \{f,g\}$ and deleting every other element of $M$
yields a $U_{1,2} \oplus U_{1,1}$-minor of $M$ using $\{e,f\}$.

Suppose neither $e$ nor $f$ is free in $M$. If there is a non-spanning circuit $C$ containing $\{e,f\}$, we can find a $U_{1,2} \oplus U_{1,1}$-minor by contracting every element of $C$ except $e$ and $f$, and deleting every other element except for one. Now suppose every circuit containing $\{e,f\}$ is spanning. Since $e$ is not free, there is a non-spanning circuit $C$ containing $e$.
Clearly $f \not\in \mathrm{cl}(C)$ otherwise $M|{\mathrm{cl}(C)}$ is a connected matroid of rank less than $r(M)$ so it contains a circuit containing $\{e,f\}$; a contradiction.
% and $f$ is not in $\mathrm{cl}(C)$ because this would imply the existence of a non-spanning circuit with $e$ and $f$. 
Therefore, after we contract all of $C$ except for $e$ and one other element, we see that $f$ will not be a loop. Thus we can find a $U_{1,2} \oplus U_{1,1}$-minor using $\{e,f\}$.
\end{proof}

\begin{corollary}
A matroid $M$ is $U_{1,2} \oplus U_{0,1}$-connected if and only if $M$ is coloopless and has at most one element that is in every dependent flat.
\end{corollary}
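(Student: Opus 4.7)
My plan is to deduce the corollary from the preceding theorem by duality, since $(U_{1,2} \oplus U_{0,1})^* = U_{1,2} \oplus U_{1,1}$. Because every $N$-minor of $M$ gives an $N^*$-minor of $M^*$ on the same ground set, $M$ is $U_{1,2} \oplus U_{0,1}$-connected if and only if $M^*$ is $U_{1,2} \oplus U_{1,1}$-connected. By the previous theorem, this holds if and only if $M^*$ is loopless, has at most one coloop, and has at most one free element. Translating through duality, the first of these says $M$ is coloopless; the remaining two say $M$ has at most one loop and $M^*$ has at most one free element. I would therefore focus on identifying these two sets of elements (loops of $M$ and free elements of $M^*$) intrinsically inside $M$.

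The key lemma I would prove is that an element $e$ of $M$ lies in every dependent flat of $M$ if and only if $e$ is a loop of $M$ or $e$ is free in $M^*$. Loops trivially lie in every flat. For a non-loop $e$ in every dependent flat, restricting attention to the dependent hyperplanes and passing to complements shows that every cocircuit of $M$ containing $e$ has independent complement, which is precisely freeness in $M^*$. The reverse direction is the most delicate step. Given $e$ free in $M^*$ and a dependent flat $F$ not containing $e$, I would use that $e$ is neither a loop nor a coloop of $M$ (freeness forces both) to extract a cocircuit $D$ of $M$ containing $e$ and disjoint from $F$; such a $D$ exists because $e$ is not a coloop of $M/F$, so it lies in some cocircuit of $M/F$, which is also a cocircuit of $M$. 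The associated hyperplane $H = E(M) - D$ then contains $F$ and hence the circuit inside $F$, so $H$ is a dependent hyperplane missing $e$, contradicting freeness of $e$ in $M^*$.

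Finally I would assemble the pieces. The loops of $M$ and the free elements of $M^*$ are disjoint, since freeness in $M^*$ forbids being a coloop of $M^*$. Moreover, if $M$ has any loop $\ell$, then every cocircuit of $M$ has complement containing $\ell$ and is therefore dependent, so no cocircuit has independent complement; this forces $M^*$ to have no free elements at all. Hence at most one of the two subsets is nonempty, and the pair of conditions ``at most one loop of $M$'' and ``at most one free element of $M^*$'' is equivalent to the single condition ``at most one element in every dependent flat of $M$.'' The main obstacle is the reverse direction of the bridging lemma, since promoting ``in every dependent hyperplane'' to ``in every dependent flat'' requires the contraction-then-cocircuit construction described above rather than a purely formal dualisation.
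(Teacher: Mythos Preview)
Your approach is exactly what the paper intends: the corollary is stated without proof because it is meant to follow from the preceding theorem by duality, and you have correctly laid out that duality and the bridging lemma identifying the elements in every dependent flat of $M$ with the loops of $M$ together with the free elements of $M^*$. The assembly at the end (loops and dual-free elements are disjoint, and a loop kills all dual-free elements) is right and neatly collapses the three dual conditions into the two stated in the corollary.

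There is one slip in your reverse direction of the bridging lemma. You write that a cocircuit $D$ of $M/F$ containing $e$ exists ``because $e$ is not a coloop of $M/F$, so it lies in some cocircuit of $M/F$.'' That implication is backwards: coloops \emph{are} in cocircuits (indeed $\{e\}$ would be one). The elements lying in no cocircuit are the \emph{loops}. What you actually need is that $e$ is not a loop of $M/F$, and this follows immediately from $e\notin F=\mathrm{cl}_M(F)$. With that correction the argument goes through: $e$ is then in some cocircuit $D$ of $M/F$, which is a cocircuit of $M$ disjoint from $F$, so $H=E(M)-D$ is a dependent hyperplane avoiding $e$, contradicting freeness of $e$ in $M^*$. (Alternatively, and slightly more directly, extend a basis of $F$ by $e$ to a basis $B$ of $M$ and take $H=\mathrm{cl}(B-e)$; this is a dependent hyperplane containing $F$ but not $e$.) The side remark that ``freeness forces'' $e$ to be a non-coloop of $M$ also needs the harmless caveat that $M$ is not free; if $M$ is free there are no dependent flats and the lemma is vacuous.
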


\section{$N$-connectivity as compared to connectivity}
\label{components}

%\begin{lemma}\label{one_more}
%Let $N$ be a disconnected matroid with the property that if $M$ is an $N$-connected matroid and $|E(M)| = |E(N)| + 1$, then one of $M \ba e$ or $M / e$ is $N$-connected. If $M$ has a loop or a coloop, then $N \cong U_{0,n}$ or $U_{n,n}$, respectively.
%\end{lemma}

%\begin{proof}
%Let $N$ and $M$ be matroids as described above, and let $|E(N)| = n$. We may assume by duality that $r(M) = r(N)$. Observe that, for every element $e$ of $E(M)$, we must have $M \ba e \cong N$ and $r(M \ba e) = r(N) = r(M)$. Thus $M$ cannot have coloops.
%, because if $e$ were a coloop of $M$, then we would have $r(M \ba e) = r(M) - 1 \neq r(N)$. 
%Moreover, $M$ and $M \ba e$ have the same number of loops if and only if $e$ is not a loop. Let $f$ be a loop of $M$. Since $M \ba e$ must have the same number of loops for all $e$ of $E(M)$, it follows that $M \cong U_{0,n+1}$ and $N \cong U_{0,n}$. 
%The dual result holds if $r(M) = r(N) + 1$, and $M$ has a coloop.
%\end{proof}

Before proving Theorem~\ref{minor_induction}, we state and prove its converse.

\begin{proposition}
If $N \in \{U_{1,2} , U_{0,2} , U_{2,2} \}$, then, for every $N$-connected matroid $M$ with $|E(M)| \geq 3$ and for every $e$ in $E(M)$, at least one of $M \ba e$ or $M / e$ is $N$-connected.
\end{proposition}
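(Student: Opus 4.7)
The plan is to treat the three matroids $N$ separately, each via a short structural characterization of what $N$-connectivity means. The case $N = U_{1,2}$ needs no new work: $U_{1,2}$-connectivity is ordinary matroid connectivity, and the conclusion is precisely the classical inductive property of matroid connectivity already cited in the introduction, namely that a connected matroid $M$ with $|E(M)| \geq 3$ has $M \ba e$ or $M/e$ connected for every $e$.

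For $N = U_{2,2}$, I would first establish that $M$ is $U_{2,2}$-connected if and only if $M$ is simple. A $U_{2,2}$-minor on $\{e,f\}$ forces $\{e,f\}$ to be independent in that minor, and independence passes from a minor back to $M$, so $U_{2,2}$-connectivity requires every pair to be independent in $M$; conversely, if $M$ is simple then $M|\{e,f\} \cong U_{2,2}$ already. Since deleting one element cannot create a loop or a parallel pair, $M \ba e$ is simple whenever $M$ is, and $|E(M)| \geq 3$ gives $|E(M \ba e)| \geq 2$. Hence $M \ba e$ is $U_{2,2}$-connected, which is the desired conclusion in this case.

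The case $N = U_{0,2}$ is handled by duality. A $U_{0,2}$-minor on $\{e,f\}$ exists if and only if there is $C \subseteq E(M) - \{e,f\}$ with $e, f \in \mathrm{cl}_M(C)$; monotonicity of the closure makes this equivalent to $\{e,f\} \subseteq \mathrm{cl}_M(E(M) - \{e,f\})$, i.e.\ $r_M(E(M) - \{e,f\}) = r_M(E(M))$, which says exactly that $\{e,f\}$ is coindependent in $M$. Hence $M$ is $U_{0,2}$-connected if and only if every two-element subset of $E(M)$ is coindependent, which is to say $M^{*}$ is simple. Since $(M/e)^{*} = M^{*} \ba e$ is simple whenever $M^{*}$ is, and $|E(M/e)| \geq 2$, it follows that $M/e$ is $U_{0,2}$-connected, completing this case.

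There is no substantial obstacle: once the three characterizations (connected, simple, cosimple) are in hand, the three inductive statements are respectively classical and immediate, and the duality between $U_{2,2}$ and $U_{0,2}$ effectively halves the work. If anything, the only thing to be careful about is ensuring that each of $M \ba e$ and $M/e$ has at least two elements before invoking the relevant definition of $N$-connectivity, which is guaranteed by the hypothesis $|E(M)| \geq 3$.
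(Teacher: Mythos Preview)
Your proof is correct and follows the same overall strategy as the paper: the $U_{1,2}$ case is classical, and duality reduces $U_{0,2}$ to $U_{2,2}$. Your treatment of the $U_{2,2}$ case is in fact a bit cleaner than the paper's: the paper invokes its Proposition~\ref{U_nn_basis}, which characterizes $U_{2,2}$-connectivity as ``simple with rank at least two,'' and is then forced into a case split on whether $r(M)>2$ or $r(M)=2$ in order to verify that $M\ba e$ still has rank at least two; you instead observe directly that (for matroids with at least two elements) $U_{2,2}$-connectivity is equivalent to simplicity, which is trivially preserved under deletion, so no case analysis is needed.
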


\begin{proof}
The result is immediate if $N \cong U_{1,2}$. By duality, it suffices to deal with the case when $N \cong U_{2,2}$. Suppose $M$ is $U_{2,2}$-connected, and $|E(M)| \geq 3$. By Proposition~\ref{U_nn_basis}, $M$ is simple with rank at least two. Therefore if $M$ is $U_{2,2}$-connected and $r(M) > 2$, we can delete any element $e$ of $M$ and still have an $N$-connected matroid. Observe that if $r(M) = 2$, then $M$ must be connected since it is simple. Therefore $M$ has no coloops, so $r(M \ba e) = 2$ for all $e$ of $E(M)$. Thus $M \ba e$ is $U_{2,2}$-connected.
\end{proof}

%We are now ready to prove Theorem~\ref{minor_induction}, using the characterization of connected simple matroids from Proposition~\ref{U23}, and the characterization of connected non-uniform matroids from Theorem~\ref{2wheel}.

\stepcounter{theorem}
\begin{proof}[Proof of Theorem~\ref{minor_induction}]

First we consider the case when $N$ is connected. Then $N$ is $U_{1,2}$-connected. Thus, by Lemma~\ref{transitivity}, every $N$-connected matroid is $U_{1,2}$-connected and so is connected. Suppose $M$ is an $N$-connected matroid with $|E(M)| > |E(N)|$. 

Assume $N$ is simple. Then, by Proposition~\ref{U23} and Lemma~\ref{transitivity}, $N$, and hence $M$, is $U_{2,3}$-connected. 
Let $M_1$ and $M_2$ be isomorphic copies of $M$ with disjoint ground sets. Pick arbitrary elements $g_1$ and $g_2$ in $M_1$ and $M_2$, and let $M_3$ be the parallel connection of $M_1$ and $M_2$ with respect to the basepoints $g_1$ and $g_2$, which we relabel as $g$ in $M_3$. Then one easily sees that $M_3$ is $N$-connected.
Let $e, f \in E(M_1) - g$. By assumption, we can remove all the elements of $E(M_1) - \{e,f,g\}$ from $M_3$ via deletion or contraction to obtain a matroid $M_4$ that is still $N$-connected. Since $M_4$ is $U_{2,3}$-connnected, it follows that $\{e,f,g\}$ is a triangle in $M_4$. Moreover,   $\{e,f\}$ is a series pair in $M_4$. However, neither $M_4 \ba e$ nor $M_4 / e$ is $U_{2,3}$-connected since $M_4 \ba e$ is disconnected, and $M_4 / e$ has $f$ and $g$ in parallel.
%If $e,f \in E(M_1)$, then, by assumption, all the elements of $E(M_1)$ except $e$, $f$ and $g$ can be removed from $M_3$ via deletion or contraction until only $\{e,f,g\}$ is left in such a way that we retain $N$-connectivity at every step. In the resulting matroid $M_4$, since $N$ is $U_{2,3}$-connected, it follows that $\{e,f,g\}$ must be a triangle. However, $M_4 \ba e$ is disconnected and $M_4 / e$ has $f$ and $g$ in parallel and is therefore not $U_{2,3}$-connected and so is not $N$-connected. 
We deduce that $N$ is not simple. Dually, $N$ is not cosimple. The only uniform matroid that is neither simple nor cosimple is $U_{1,2}$, so either $N \cong U_{1,2}$, or $N$ is non-uniform. 

Next we show that $N$ cannot be non-uniform. Suppose, instead, that $N$ is non-uniform. Then, as $N$ is connected, by Theorem~\ref{2wheel}, $N$ is $M(\mathcal{W}_2)$-connected.

%Suppose $M$ is $N$-connected with $|E(M)| > |E(N)|$ and $M(\mathcal{W}_2)$-connected (that is, connected and non-uniform), and that for every $e \in E(M)$, $M \ba e$ or $M / e$ is $N$-connected.
Recall that $M$ is $N$-connected with $|E(M)| > |E(N)|$. Let $n = |E(N)| + 1$ and distinguish elements $e, f$ of $E(M)$. Let each of $M_1, M_2, \ldots, M_n$ be a copy of $M$  and let $e_i$ and $f_i$ be the elements of $M_i$ corresponding to $e$ and $f$. Let $M'$ be the parallel connection of $M_1, M_2, \ldots, M_n$ with respect to the basepoints $e_1,e_2,\ldots,e_n$ where  these elements are relabeled as $e$ in $M'$. By assumption, for each $M_i$, we can remove $E(M_i) - \{e,f_i\}$ from $M'$ in such a way that the resulting matroid $M''$ is $N$-connected. Since $M''$ is connected, it must be isomorphic to $U_{1,n+1}$, which is clearly not $M(\mathcal{W}_2)$-connected; a contradiction.
%For every copy of $M$ in $M'$, we can remove every element except $e$ in such a way that we retain connectivity after every removal. Do this for every element except $e$ and one other element in every copy of $M$ in $M'$. The resulting matroid is isomorphic to $U_{1,n}$.
%If we remove all elements except $e$ and one other element from the $n$ copies of $M$ in $M'$ in such a way that the matroid is connected every time we remove an element, we necessarily obtain $U_{1,n}$. 
%This matroid is not $M(\mathcal{W}_2)$-connected and therefore not $N$-connected.
We conclude that $N$ cannot be non-uniform, and hence the theorem holds when $N$ is connected.

Next we consider the case when $N$ is disconnected, first showing the following.

\begin{sublemma}\label{sublemma}
If each element of $N$ is a loop or a coloop, then $N \cong U_{0,2}$ or $U_{2,2}$.
\end{sublemma}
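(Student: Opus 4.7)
The plan is as follows. Since each element of $N$ is a loop or a coloop and $N$ has at least two elements, $N \cong U_{0,a} \oplus U_{b,b}$ for some non-negative integers $a,b$ with $a+b \geq 2$; the goal is to show that $(a,b) \in \{(2,0),(0,2)\}$. Since $M$ is $N$-connected if and only if $M^{*}$ is $N^{*}$-connected, and $(U_{0,a} \oplus U_{b,b})^{*} \cong U_{0,b} \oplus U_{a,a}$, I may assume without loss of generality that $a \geq b$ and aim to conclude $(a,b)=(2,0)$; the dual argument then yields $(0,2)$. Under this normalization, the configurations to rule out split into three families: (i) $b=0$ with $a \geq 3$; (ii) $(a,b)=(1,1)$; and (iii) $a \geq 2$ with $b \geq 1$. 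For each, I will exhibit an $N$-connected matroid $M$ with $|E(M)| > |E(N)|$ and an element $e \in E(M)$ for which neither $M \ba e$ nor $M/e$ is $N$-connected, contradicting the hypothesis of Theorem~\ref{minor_induction}.

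In family (i), I take $M = U_{1,a} \oplus U_{0,1}$ and let $e$ be the loop. Given any pair of elements of $M$, contracting an element of the parallel class $U_{1,a}$ outside the pair (available since $a \geq 3$) collapses $U_{1,a}$ into $U_{0,a-1}$, which combined with the original loop yields a $U_{0,a}$-minor containing the pair; so $M$ is $N$-connected. Since $e$ is a loop, $M \ba e = M/e = U_{1,a}$, which has exactly $|E(N)|=a$ elements, so the only candidate $U_{0,a}$-minor is $U_{1,a}$ itself; but $U_{1,a} \not\cong U_{0,a}$, so neither the deletion nor the contraction is $N$-connected.

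In family (ii), where $N = U_{0,1} \oplus U_{1,1}$, I apply Theorem~\ref{loop and coloop}, which states that $M$ is $N$-connected if and only if every clonal class of $M$ is trivial, and take $M = M(K_4)$. No two edges of $K_4$ are clones, because any transposition of two edges fails to preserve some triangle of $K_4$. For any edge $e$, however, the deletion $M \ba e$ contains a nontrivial clonal class, namely the two edges meeting a common endpoint of $e$ at a third vertex (whose transposition preserves all circuits of $K_4 \ba e$); and the contraction $M/e$ produces two parallel classes, each providing clones. Hence both $M \ba e$ and $M/e$ fail to be $N$-connected.

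In family (iii), I take $M = U_{2,a+2} \oplus U_{b,b}$ and let $e$ be any coloop. For any pair of elements of $M$, contracting two elements of $U_{2,a+2}$ outside the pair (possible because $a \geq 2$ ensures at least two such elements remain) collapses $U_{2,a+2}$ into $U_{0,a}$, producing a minor isomorphic to $N$ that contains the pair. On removing $e$, $M \ba e = M/e = U_{2,a+2} \oplus U_{b-1,b-1}$. The main technical point, and the chief obstacle of the argument, is that every minor of a uniform matroid is again uniform, so every minor of this direct sum has the form $U_{r',n'} \oplus U_{c,c}$ with $c \leq b-1$; for such a sum to be isomorphic to $U_{0,a} \oplus U_{b,b}$, the uniform summand $U_{r',n'}$ would need to contribute both $a$ loops and at least one coloop simultaneously, which is impossible since a uniform matroid is either all loops, all coloops, or has neither. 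Applying duality then handles the case $a < b$, completing the proof.
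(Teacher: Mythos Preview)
Your proof is correct and follows the same overall strategy as the paper: reduce by duality to $a \geq b$, then exhibit for each remaining shape of $N$ an $N$-connected matroid $M$ with an element $e$ such that neither $M\ba e$ nor $M/e$ is $N$-connected. Your treatment of $N = U_{0,1}\oplus U_{1,1}$ via $M(K_4)$ matches the paper exactly. The only differences are the specific counterexamples in the other two families: for $N\cong U_{0,a}$ with $a\ge 3$ you use $M=U_{1,a}\oplus U_{0,1}$, while the paper (dualizing its $U_{n,n}$ case) effectively uses $U_{1,3}\oplus U_{0,a-2}$; for $N\cong U_{0,a}\oplus U_{b,b}$ with $a\ge 2$, $b\ge 1$ you use $M=U_{2,a+2}\oplus U_{b,b}$, while the paper uses the simpler $M=U_{0,a+1}\oplus U_{b,b}$, where removing a coloop immediately drops the coloop count below $b$ without any analysis of uniform minors. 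Both sets of examples work; the paper's choice in the third family just makes the ``no $N$-minor'' check a one-line count of coloops rather than the structural argument you give.
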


%By Proposition~\ref{U_nn_basis}, a $U_{2.2}$-connected matroid $M$ must be simple and have rank at least two. Therefore if $M$ is $U_{2,2}$-connected and $r(M) > 2$, we can delete any element $e$ of $M$ and still have an $N$-connected matroid. Observe that if $r(M) = 2$, if we assume that $|E(M)| > 2$, then $M$ must be connected since it is simple. Therefore $M$ has no coloops, so $r(M \ba e) = 2$ for all $e$ of $E(M)$. Thus $M \ba e$ is $U_{2,2}$-connected for all $U_{2,2}$-connected $M$ with $|E(M)| \geq 3$ and all $e$ of $E(M)$, and $U_{0,2}$ has a dual property.
%
%Now we show that $N$ cannot be any other matroid composed of only loops or coloops. 
Suppose $n \geq 3$ and let $N \cong U_{n,n}$. Let $M = U_{2,3} \oplus U_{n-2,n-2}$. Then $M$ is $N$-connected, but if $e$ is a coloop of $M$, then neither $M \ba e$ nor $M / e$ has a $U_{n,n}$-minor. Therefore $N \not \cong U_{n,n}$; dually, $N \not \cong U_{0,n}$.

If $N = U_{0,1} \oplus U_{1,1}$, then let $M = M(K_4)$. By Theorem~\ref{loop and coloop}, $M$ is $N$-connected, but, for every $e$ of $E(M)$, both $M \ba e$ and $M / e$ have nontrivial clonal classes and are therefore not $N$-connected. Now assume $N \cong U_{0,n} \oplus U_{m,m}$ for some $n  \geq  2$ and  $m \geq 1$. Then $U_{0,n+1} \oplus U_{m,m}$ is an $N$-connected matroid, say $M$. But if $e$ is a coloop, then neither $M \ba e$ nor $M / e$ has an $N$-minor. On combining this contradiction with duality, we conclude that \ref{sublemma} holds.

Now assume that $N$ has $k+s$ components $N_1, N_2, \ldots, N_{k+ s}$ where those with at least two elements are $N_1, N_2, \ldots, N_{k}$. Then $k \ge 1$. For each $i$ in $\{1,2,\ldots,k\}$, choose an element $e_i$ of $N_i$ and relabel it as $p$. Let $M'$ be the parallel connection of $N_1, N_2, \ldots, N_{k}$ with respect to the basepoint $p$ where we take $M' = N_1$ if $k=1$. Let $N'$ be a copy of $N$ whose ground set is disjoint from $E(N)$, and let $n'_i$ be the component of $N'$ corresponding to $N_i$. 
Let  $M_1 = N' \oplus M'$. 
We   show next  that

\begin{sublemma}
\label{m1n}
$M_1$ is $N$-connected. 
\end{sublemma}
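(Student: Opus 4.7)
The plan is to show that for every pair of distinct elements $x,y$ of $E(M_1)$ there is an $N$-minor of $M_1$ using $\{x,y\}$. Since $M_1 = N' \oplus M'$, any minor of $M_1$ decomposes as $A \oplus B$ where $A$ is a minor of $N'$ and $B$ is a minor of $M'$. The strategy is to select an index $r \in \{1,\dots,k+s\}$ and take $A = \bigoplus_{s \neq r} N'_s$ together with a minor $B \cong N_r$ of $M'$, choosing $r$ (and the minor $B$) so that $\{x,y\}$ is captured in $A \oplus B \cong N$.

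I would dispose of the routine configurations first. If $\{x,y\} \subseteq E(N')$, then $A = N'$ with $B$ empty (obtained by deleting $E(M')$) works immediately. If $\{x,y\} \subseteq E(M')$ with both elements lying in $E(N_i) \cup \{p\}$ for a single $i \le k$, set $r = i$ and restrict $M'$ to $(E(N_i) \setminus \{e_i\}) \cup \{p\}$ to obtain $B \cong N_i$ using $\{x,y\}$, with $A = N' \setminus E(N'_i)$. Analogously, if $x \in E(N'_\ell)$ while $y \in E(N_m) \setminus \{e_m\}$ or $y = p$, and $\ell \neq m$, then $r = m$ works with $A = N' \setminus E(N'_m)$ and $B$ the restriction of $M'$ to $N_m$.

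The main obstacle is the two configurations in which the natural choice of $r$ would have to be used twice. First, suppose $\{x,y\} \subseteq E(M')$ with $x \in E(N_i) \setminus \{e_i\}$ and $y \in E(N_j) \setminus \{e_j\}$ for distinct $i,j$. I would set $r = i$ and apply Lemma~\ref{replace_basepoint} to $M'$ with the $2$-separation $(E(N_i) \setminus \{e_i\},\, E(M') \setminus (E(N_i) \setminus \{e_i\}))$, which is displayed by the tree-decomposition edge joining the $N_i$-vertex to the rest of $M'$; since $y$ lies on the $Y$-side, the lemma supplies a minor $M_X(y) \cong N_i$ of $M'$ on ground set $(E(N_i) \setminus \{e_i\}) \cup \{y\}$, and this minor uses both $x$ and $y$. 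Second, if $x \in E(N'_\ell)$ and $y \in E(N_\ell) \setminus \{e_\ell\}$, then because $k + s \geq 2$ we may pick $r \neq \ell$. When $r \leq k$, the same kind of application of Lemma~\ref{replace_basepoint} produces a minor of $M'$ isomorphic to $N_r$ and using $y$. When $r > k$, the component $N_r$ is a single loop or coloop, which can be realised as a minor of $M'$ through $y$ by contracting $E(M') \setminus \{y\}$ (giving a loop, since $M'$ is connected) or by restricting to $\{y\}$ (giving a coloop, since $y$ is not a loop of $M'$). In each situation $A \oplus B \cong N$ and uses $\{x,y\}$.

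The chief technical concern is that Lemma~\ref{replace_basepoint} requires a $2$-separation displayed by an edge of a tree decomposition whose vertices each have at least three elements, so the argument above needs extra care when some $N_i$ has only two elements (forcing $N_i \cong U_{1,2}$ to be absorbed into the adjacent circuit vertex in any admissible decomposition of $M'$). In that degenerate case I would bypass the lemma by a direct construction: the connectedness of $N_j$ provides a circuit $C$ of $N_j$ with $\{e_j,y\} \subseteq C$, so contracting $C \setminus \{e_j,y\}$ and deleting $E(N_j) \setminus C$ inside $M'$ makes $y$ parallel to $p$, after which deleting $p$ yields a copy of $N_i$ with $p$ relabelled as $y$.
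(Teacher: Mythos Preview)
Your proof is correct and follows essentially the same approach as the paper's: pick an index $r$, realise an $N_r$-minor of $M'$ through the relevant element(s), and pair it with $\bigoplus_{t\neq r}N'_t$. The paper compresses all of your case analysis into the single observation that, since $M'$ is a connected parallel connection, for every $e\in E(M')$ and every $i\in\{1,\dots,k+s\}$ there is an $N_i$-minor of $M'$ using $e$ (and, when both elements lie in $M'$, an $N_j$-minor using both); you essentially prove this observation via Lemma~\ref{replace_basepoint} together with your direct contraction argument. Two small points: your invocation of Lemma~\ref{replace_basepoint} tacitly assumes the split $(E(N_i)\setminus\{e_i\},\,\text{rest})$ is displayed by an edge of the \emph{canonical} tree decomposition of $M'$, which need not be literally true, but your direct construction (push $y$ parallel to $p$ via a circuit of $N_j$, then delete $p$) is exactly the standard proof of that lemma and works for any $2$-sum decomposition, so no harm done; and after deleting $p$ in that construction you should, when $k\ge 3$, also restrict to $(E(N_i)\setminus\{e_i\})\cup\{y\}$ to isolate the desired $N_i$-copy.
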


Suppose  $\{e,f\} \subseteq E(M_1)$. Certainly $M_1$ has an $N$-minor using $\{e,f\}$ if   $\{e,f\} \subseteq E(N')$. Next suppose that $e \in E(M')$. Then, since $M'$ is a connected parallel connection, we see that, for each $i$ in $\{1,2,\ldots,k+s\}$, there is an $N_i$-minor of $M'$ using $e$. Thus, if $f \in E(N')$, say $f \in E(N'_j)$, then we can choose $i \neq j$ and get an $N$-minor of $M_1$ using $\{e,f\}$ unless $k= 1 = j$. In the exceptional case, $M'$ has an $N_2$-minor with ground set $\{e\}$ and again we get an $N$-minor of $M_1$ using $\{e,f\}$. We may now assume that $f \in E(M')$, say 
$f \in E(N_j)$. Then $M'$ has an $N_j$-minor using $\{e,f\}$, so $M_1$ has an $N$-minor using $\{e,f\}$. Thus \ref{m1n} holds.

Since $M_1$ is $N$-connected, by assumption, we may delete or contract elements of $M_1$ until we obtain an $N$-connected matroid $M_2$ with $|E(M_2)| = |E(N)| + 1$. In particular, we may remove elements from $M'$ in $M_1$ until a single element  $g$  remains. Now choose $e$ in $E(N'_1)$. Then $M_2 \ba e$ or $M_2/e$ is isomorphic to $N$. But both $M_2 \ba e$ and $M_2/e$ have more one-element components than $N'$; a contradiction.
\end{proof}

Recall that we say that a matroid $N$ has the transitivity property if, for every matroid $M$ and every triple $\{e,f,g\} \subseteq E(M)$, if $e$ is in an $N$-minor with $f$, and $f$ is in an $N$-minor with $g$, then $e$ is in an $N$-minor with $g$. Clearly $N$ has the transitivity property if and only if $N^*$ has the transitivity property.

\begin{lemma}\label{parallel-delete}
Suppose $N$ is a  matroid having the transitivity property. Let $N'$ be     obtained from $N$ by adding an element $f$ in parallel to a non-loop element $e$ of $N$. Then there is an element $g$ of $E(N')$ such that $N' \ba g$ is isomorphic to $N$ and has $\{e,f\}$ as a $2$-circuit. Moreover,   $g$ is in a $2$-circuit in $N$.
\end{lemma}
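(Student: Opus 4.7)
The plan is to invoke the transitivity property with $M = N'$ and a carefully chosen triple. Pick any $x \in E(N) \setminus \{e\}$ and consider the triple $\{e, f, x\}$. First I note that $N' \ba f = N$ itself, which is an $N$-minor of $N'$ using $\{e, x\}$. Next, $N' \ba e$ is isomorphic to $N$, with $f$ taking over the role of $e$ in its parallel class, and this minor uses $\{f, x\}$. Since $N$ has the transitivity property, $N'$ must then possess an $N$-minor using $\{e, f\}$.

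Such an $N$-minor is obtained from $N'$ by removing exactly one element $g \in E(N') \setminus \{e, f\} = E(N) \setminus \{e\}$, either by deletion or contraction. The next step is to rule out contraction. If $g$ were not a loop of $N'$, then $r(N' / g) = r(N') - 1 = r(N) - 1$, which precludes $N' / g \cong N$. If $g$ were a loop of $N'$ (equivalently of $N$, since the parallel addition of $f$ does not affect whether other elements are loops), then $N' / g = N' \ba g$ would have one fewer loop than $N$, again ruling out the isomorphism. Hence the transitivity-produced $N$-minor must be $N' \ba g$ for some non-loop $g \in E(N) \setminus \{e\}$; since $g \notin \{e, f\}$, the pair $\{e, f\}$ persists as a 2-circuit in $N' \ba g$.

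For the moreover statement, I will compare parallel-class profiles on the two sides of the isomorphism $N' \ba g \cong N$. Let $P$ and $R$ denote the parallel classes of $e$ and $g$ in $N$, respectively. If $R = P$, then $g$ is parallel to $e$, so $\{e, g\}$ is a 2-circuit of $N$. Otherwise $R \neq P$, and in $N' \ba g$ the class $P$ has grown to $P \cup \{f\}$ of size $|P|+1$ while $R$ has shrunk to $R \setminus \{g\}$ of size $|R|-1$, all other parallel classes being unchanged. Since the number of 2-circuits of a matroid is $\sum_Q \binom{|Q|}{2}$ over its parallel classes $Q$ and this count is an isomorphism invariant, cancelling the contributions from the unchanged classes yields
\[
\binom{|P|+1}{2} + \binom{|R|-1}{2} = \binom{|P|}{2} + \binom{|R|}{2},
\]
which simplifies to $|R| = |P| + 1 \geq 2$. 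Thus $g$ belongs to a parallel class of $N$ of size at least two, and so is in a 2-circuit of $N$.

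The step I expect to be most delicate is ruling out the contraction case, which requires a clean separation into loop and non-loop subcases together with the observation that loop status is unaffected by the parallel addition of $f$. Once that is handled, the rest of the argument reduces to a single application of transitivity followed by a bookkeeping comparison of parallel classes.
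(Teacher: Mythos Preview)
Your proof is correct and follows essentially the same line as the paper's. Both arguments apply transitivity inside $N'$ to produce an $N$-minor using $\{e,f\}$, then observe that the single removed element must be deleted rather than contracted, and finally argue that this deletion must kill a $2$-circuit. Your presentation differs only in detail: the paper rules out contraction in one line via $r^*(N') = r^*(N)+1$, whereas you split into loop/non-loop subcases using rank; and for the ``moreover'' clause the paper simply notes that adding $f$ created at least one new $2$-circuit, so deleting $g$ must destroy one, while you recover the same conclusion (and a bit more, namely $|R|=|P|+1$) via a parallel-class count.
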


\begin{proof}
The transitivity property implies that $\{e,f\}$ is in an $N$-minor  of $N'$.  Since $r^*(N') > r^*(N)$,  there must be an element $g$ of $E(N') - \{e,f\}$ such that $N'\ba g \cong N$.  Since we have introduced a new $2$-circuit in constructing $N'$, when we delete $g$, we must destroy a $2$-circuit.
\end{proof}

By the last lemma and duality, we obtain the following result.

\begin{corollary}
\label{pd2}
If $N$ is a matroid having the transitivity property, then $N$ has a component with more than one element.
\end{corollary}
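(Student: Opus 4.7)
The plan is to prove the contrapositive: if every component of $N$ consists of a single element, then $N$ fails the transitivity property. Under this assumption every element of $N$ is either a loop or a coloop, so no pair of elements forms a circuit of size exactly two (two coloops, or a coloop and a loop, are independent of each other, while two loops fail to form a circuit because each already is one). In particular $N$ has no $2$-circuit.

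I would then split into two cases according to whether $N$ has a non-loop. Suppose first that $N$ contains a coloop $e$. Since $e$ is a non-loop, Lemma~\ref{parallel-delete} applies directly: adjoining an element $f$ in parallel to $e$ to form $N'$, the lemma yields some $g \in E(N')$ with $N'\ba g \cong N$ and, crucially, with $g$ lying in a $2$-circuit of $N$ itself. This contradicts the observation of the first paragraph.

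If instead $N$ has no non-loop element, then $N$ is a direct sum of loops. Here I would invoke the remark preceding Lemma~\ref{parallel-delete} that $N$ has the transitivity property if and only if $N^*$ does. Since $N^*$ is then a direct sum of coloops, the previous case applied to $N^*$ gives the contradiction. The entire argument pivots on the ``moreover'' clause of Lemma~\ref{parallel-delete}, which asserts that the distinguished element $g$ already lies in a $2$-circuit of $N$ itself (not merely of the extension $N'$); this is the only place where genuine matroid structure is being forced, and it is precisely incompatible with $N$ being built out of loops and coloops, so I would be careful to cite that part of the lemma explicitly.
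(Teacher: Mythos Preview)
Your proof is correct and matches the paper's approach exactly: the paper simply states that the corollary follows ``by the last lemma and duality,'' and you have spelled out precisely this argument, invoking the ``moreover'' clause of Lemma~\ref{parallel-delete} when $N$ has a non-loop, and passing to $N^*$ via the transitivity-is-self-dual observation otherwise.
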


%This lemma tells us that a connected matroid $N$ with the transitivity property can be neither simple nor cosimple, since the transitivity property implies that a parallel pair (and, by duality, a series pair) occurs in every $N$-minor.

%Let $M$ be a matroid. Recall that a {\it fan} is an ordering of elements $(s_1, s_2, \ldots s_k)$ of $E(M)$ with the following properties:

The following elementary observation and its dual will be used repeatedly in the proof of Theorem~\ref{transitivity3}.

\begin{lemma}
\label{transit}
Suppose $N$ is a matroid with the transitivity property. Let $N_0$ be a component of $N$ with the largest number of elements. Suppose $f$ is added in parallel to an element $e$ of $N_0$. Let $N_0'$ and $N'$ be the resulting extensions of $N_0$ and $N$, respectively. Suppose $g \in E(N')$ such that $N'\ba g \cong N$. Then $g \in E(N_0')$.
\end{lemma}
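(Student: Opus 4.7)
My plan is to argue by contradiction, using only the sizes of the connected components of $N$, $N'$, and $N' \ba g$.

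First I would record the structural observation that since $f$ is added in parallel to the element $e$ of the component $N_0$ of $N$, the new element $f$ lies in the same connected component of $N'$ as $e$. Consequently, the component of $N'$ containing $E(N_0)$ is exactly $E(N_0) \cup \{f\} = E(N_0')$, while every other component of $N'$ is identical to one of the components of $N$ different from $N_0$. In particular, the multiset of component sizes of $N'$ is obtained from that of $N$ by incrementing one occurrence of $|E(N_0)|$ by $1$.

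Suppose, toward a contradiction, that $g \notin E(N_0')$. Then $g$ lies in some component $C$ of $N'$ that is also a component of $N$, and since $N_0'$ and $C$ are distinct components of $N'$, deletion of $g$ leaves $N_0'$ untouched. Hence $N_0'$ remains a connected component of $N' \ba g$, and in particular $N' \ba g$ has a component of size $|E(N_0)| + 1$. But $N_0$ was chosen as a largest component of $N$, so no component of $N$ has more than $|E(N_0)|$ elements. Since an isomorphism $N' \ba g \cong N$ must match component sizes as a multiset, this is a contradiction, and we conclude that $g \in E(N_0')$.

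I do not foresee any real obstacle: the argument is essentially a pigeonhole on component sizes before and after a single-element deletion, and the only fact that deserves explicit verification is that a parallel extension of a connected matroid remains connected, which is immediate from the definition of matroid connectedness. Note that the transitivity hypothesis on $N$ is not invoked inside this lemma; it enters only through the surrounding context that guarantees the existence of $g$ via Lemma~\ref{parallel-delete}.
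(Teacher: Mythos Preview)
Your argument is correct; the paper does not supply a proof for this lemma, labeling it simply an ``elementary observation,'' and your component-size pigeonhole is precisely the elementary argument one expects. Your remark that the transitivity hypothesis is unused in the lemma itself is also accurate.
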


Recall that a set $S$ of elements of a matroid $M$ is a {\it fan} if $|S| \geq 3$ and there is an ordering $(s_1, s_2, \ldots, s_n)$ of the elements of $S$ such that, for all $i$ in $\{1,2, \ldots, n-2\}$,

\begin{enumerate}[label=(\roman*)]
\item $\{s_i, s_{i+1}, s_{i+2}\}$ is a triangle or a triad; and
\item when $\{s_i, s_{i+1}, s_{i+2}\}$ is a triangle, $\{s_{i+1}, s_{i+2}, s_{i+3}\}$ is a triad; and when $\{s_i, s_{i+1}, s_{i+2}\}$ is a triad, $\{s_{i+1}, s_{i+2}, s_{i+3}\}$ is a triangle.
\end{enumerate}

%If $\{s_1, s_2, s_3\}$ is a triangle, then the elements of odd index are called {\it spokes} and the elements of even index are called {\it rims}. If $\{s_1, s_2, s_3\}$ is a triad, we use the opposite naming convention. 

Note that the above extends the definition given in \cite{oxl2} by eliminating the requirement that $M$ be simple and cosimple. We shall follow the familiar practice here of blurring the distinction between a fan and a fan ordering.

\begin{lemma}\label{special_fan}
Let $(s_1, s_2, \ldots, s_n)$ be a fan $X$ in a matroid $M$ such that each of $\{s_1, s_2\}$ and $\{s_{n-1}, s_n\}$ is a circuit or a cocircuit. Then $X$ is a component of $M$.
\end{lemma}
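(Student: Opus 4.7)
The plan is to prove Lemma~\ref{special_fan} by induction on $n = |X|$, showing that $X$ is a single component of $M$. For the base case $n = 3$, the triple $\{s_1, s_2, s_3\}$ is either a triangle or a triad, and both $\{s_1, s_2\}$ and $\{s_2, s_3\}$ are $2$-circuits or $2$-cocircuits. If $\{s_1, s_2, s_3\}$ is a triangle, then its $2$-subsets are independent in $M$, forcing $\{s_1, s_2\}$ and $\{s_2, s_3\}$ each to be a $2$-cocircuit; then $s_1, s_2, s_3$ are pairwise in series, giving $r(X) = 2$, $r^*(X) = 1$, and hence $\lambda_M(X) = 0$. The triad case is dual. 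In either case, the elements of $X$ lie in a common triangle or triad, so $X$ is connected, and hence is a component of $M$.

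For the inductive step with $n \geq 4$, I invoke duality (the dual of a special fan is a special fan, with circuits and cocircuits exchanged) to reduce to the case in which $\{s_{n-1}, s_n\}$ is a $2$-circuit. Since a triangle is a minimal circuit and cannot strictly contain the $2$-circuit $\{s_{n-1}, s_n\}$, the triple $\{s_{n-2}, s_{n-1}, s_n\}$ must be a triad. I then plan to show that $(s_1, \ldots, s_{n-1})$ is a special fan in $M \ba s_n$, with $\{s_1, s_2\}$ unchanged and the closing pair $\{s_{n-2}, s_{n-1}\}$ now a $2$-cocircuit inherited from the triad; the inductive hypothesis will then give that $\{s_1, \ldots, s_{n-1}\}$ is a component of $M \ba s_n$.

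The main obstacle is verifying that the fan structure is preserved in $M \ba s_n$. Triangles not containing $s_n$ plainly remain circuits. A triad $\{s_i, s_{i+1}, s_{i+2}\}$ with $i \leq n-4$ could only fail to remain minimal as a cocircuit in $M \ba s_n$ if some set $\{s_j, s_k, s_n\}$ (with $j, k \leq n-2$) were a cocircuit of $M$, but any such set would meet the $2$-circuit $\{s_{n-1}, s_n\}$ in the single element $s_n$, violating orthogonality. The same orthogonality principle, together with the minimality of the original triad $\{s_{n-2}, s_{n-1}, s_n\}$ and the observation that $\{s_{n-1}, s_n\}$ cannot be simultaneously a $2$-circuit and a $2$-cocircuit (else it would form a separate $U_{1,2}$-component, contradicting the connecting triad), shows that $\{s_{n-2}, s_{n-1}\}$ is a $2$-cocircuit of $M \ba s_n$ and that $\{s_1, s_2\}$ retains its status.

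To lift the conclusion from $M \ba s_n$ to $M$: the $2$-circuit $\{s_{n-1}, s_n\}$ places $s_n$ in the same component as $s_{n-1}$, so $X$ is contained in a single component of $M$. Conversely, if some $y \notin X$ lay in the component of $s_1$ in $M$, there would be a circuit $C$ of $M$ containing both $s_1$ and $y$. If $s_n \notin C$, then $C$ is already a circuit of $M \ba s_n$, contradicting the inductive conclusion. If $s_n \in C$ but $s_{n-1} \notin C$, the parallel exchange $(C - s_n) \cup \{s_{n-1}\}$ yields a circuit of $M \ba s_n$ through $s_1$ and $y$, again a contradiction. Finally, if $\{s_{n-1}, s_n\} \subseteq C$, then $C = \{s_{n-1}, s_n\}$ by circuit minimality, forcing $y \in X$.
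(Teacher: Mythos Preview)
Your inductive argument is correct. The base case and the verification that deleting $s_n$ preserves the fan structure in $M\ba s_n$ are sound; in particular, your orthogonality check correctly rules out any cocircuit of the form $T'\cup\{s_n\}$ with $T'\subseteq\{s_i,s_{i+1},s_{i+2}\}$ for $i\le n-4$, and the ``parallel exchange'' step is valid because parallel elements are interchangeable in every circuit. A couple of minor points could be tightened: the case $\{s_{n-1},s_n\}\subseteq C$ is in fact vacuous (since then $C=\{s_{n-1},s_n\}$ cannot contain $s_1$), and the impossibility of $\{s_{n-1},s_n\}$ being a cocircuit follows more directly from its being properly contained in the triad $\{s_{n-2},s_{n-1},s_n\}$. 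These do not affect correctness.

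The paper's proof is quite different and considerably shorter: rather than inducting, it computes $\lambda_M(X)$ directly. Assuming by duality that $\{s_1,s_2,s_3\}$ is a triangle (so $\{s_1,s_2\}$ is a cocircuit), one observes that the triangles of the fan place every even-indexed element in the closure of the odd-indexed ones, so $\{s_i:i\text{ odd}\}$ spans $X$; dually, $\{s_i:i\text{ even}\}$ spans $X$ in $M^*$. Hence $r(X)+r^*(X)\le\lceil n/2\rceil+\lfloor n/2\rfloor=|X|$, giving $\lambda_M(X)\le 0$ in one stroke. Your approach has the virtue of being entirely elementary (no rank arithmetic), but it requires verifying several preservation statements under deletion; the paper's rank-counting argument bypasses all of that at the cost of a slightly less obvious spanning observation.
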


\begin{proof}
By switching to the dual if necessary, we may assume that $\{s_1, s_2, s_3\}$ is a triangle of $M$. Thus $\{s_1, s_2\}$ is a cocircuit. Observe that $\{s_i : i \mathrm{~is~odd} \}$ spans $X$. 
%This is immediate if $n$ is odd, while it follows if $n$ is even from the fact that $\{s_{n-1}, s_n\}$ is a circuit in that case. 
If $n$ is odd, this is immediate, and if $n$ is even, it follows from the fact that $\{s_{n-1}, s_n\}$ is a circuit in this case.
By duality, $\{s_i : i \mathrm{~is~even} \}$ spans $X$ in $M^*$. Hence $r(X) + r^*(X) \leq |X|$; that is, $\lambda(X) \leq 0$, so $X$ is a component of $M$.
\end{proof}

%\begin{lemma}\label{special_fan}
%If $X$ if a fan of $M$ that begins with a $2$-circuit or $2$-cocircuit and ends with a $2$-circuit or $2$-cocircuit, then $X$ is a component of $M$.
%\end{lemma}
%
%\begin{proof}
%If $S \subseteq X$ is the set of spokes of $X$, then $S$ spans $X$. Therefore $r(X) \leq |S|$. Similarly, the set $X - S$ of rims spans $X$ in the dual of $M$, so $r^*(X) \leq |X - S|$. Thus $\lambda_M(X) \leq |S| + |X - S| - |X| = 0$. Since $\lambda_M(X)$ must be nonnegative, $\lambda_M(X) = 0$, and since fans are connected, $X$ is a component of $M$.
%\end{proof}

We define a {\it special fan} to be a fan $(s_1, s_2, \ldots s_k)$ such that $\{s_1, s_2\}$ is a cocircuit of $M$. We will now show that $U_{1,2}$ and $M(\mathcal{W}_2)$ are the only connected matroids with the transitivity property.

\stepcounter{theorem}
\begin{proof}[Proof of Theorem~\ref{transitivity3}]
 It is clear that $U_{1,2}$ has the transitivity property. By Theorem~\ref{2wheel}, two elements of $M$ are in an $M(\mathcal{W}_2)$-minor together if and only if they are in a connected, non-uniform component together. It follows that $M(\mathcal{W}_2)$ has the transitivity property. 
 
Suppose that $N$ has the transitivity property.    Assume that $N$ is not isomorphic to $U_{1,2}$ or $M(\mathcal{W}_2)$. 
Next we show the following.  

\begin{sublemma}
\label{biggest}
Let $N_0$ be a largest component of $N$. Then $N_0$ is isomorphic to $U_{1,2}$ or $M({\mathcal W}_2)$.
\end{sublemma}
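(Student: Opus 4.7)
The plan is to use transitivity via Lemmas~\ref{parallel-delete} and \ref{transit} (and their duals) to show that $N_0$ contains both a $2$-circuit and a $2$-cocircuit, and then to apply Lemma~\ref{special_fan} together with orthogonality to force $N_0 \in \{U_{1,2}, M(\mathcal{W}_2)\}$.

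By Corollary~\ref{pd2}, $|E(N_0)| \geq 2$, so $N_0$ is connected with no loops or coloops. First I would show $N_0$ has a $2$-circuit: pick a non-loop $e \in E(N_0)$ and let $N'$ be the parallel extension of $N$ by a new element $f$ across $e$. By Lemma~\ref{parallel-delete} there is $g \in E(N') \setminus \{e,f\}$ with $N' \backslash g \cong N$ and $g$ in a $2$-circuit of $N$; Lemma~\ref{transit} places $g$ in the extended component $N_0'$, and since $g \neq f$ we actually have $g \in E(N_0)$. The $2$-circuit of $N$ containing $g$ therefore lies inside the component $N_0$, giving $N_0$ a $2$-circuit $\{a,b\}$. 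Since $N^*$ also has the transitivity property and $N_0^*$ is a largest component of $N^*$, the dual argument yields a $2$-cocircuit $\{c,d\}$ of $N_0$.

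By orthogonality, $|\{a,b\} \cap \{c,d\}| \in \{0,2\}$. If this intersection has size $2$, then $\{a,b\} = \{c,d\}$ is simultaneously a circuit and a cocircuit, so $\lambda_{N_0}(\{a,b\}) = 1 + (r(N_0)-1) - r(N_0) = 0$; this makes $\{a,b\}$ a separator of the connected matroid $N_0$, forcing $E(N_0) = \{a,b\}$ and $N_0 \cong U_{1,2}$. Otherwise $\{a,b\}$ and $\{c,d\}$ are disjoint and $|E(N_0)| \geq 4$.

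In the disjoint case, I would aim to construct the fan $(a,b,c,d)$ in $N_0$. Since $\{a,b\}$ is a $2$-circuit, the triple $\{a,b,c\}$ properly contains a circuit and so cannot itself be a triangle; if it occurs in a fan it must therefore be a triad, which by the fan alternation forces $\{b,c,d\}$ to be a triangle. Combined with $\{a,b\}$ being a $2$-circuit and $\{c,d\}$ a $2$-cocircuit, the hypotheses of Lemma~\ref{special_fan} are met, so $\{a,b,c,d\}$ is a component of $N_0$; connectedness of $N_0$ then forces $E(N_0) = \{a,b,c,d\}$. A direct calculation gives $r(N_0) = 2$ (the $2$-cocircuit $\{c,d\}$ has rank-$1$ complement $\{a,b\}$), and the circuits of $N_0$ are forced to be $\{a,b\}$, $\{a,c,d\}$, $\{b,c,d\}$, so $N_0 \cong M(\mathcal{W}_2)$. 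The main obstacle is verifying that $\{a,b,c\}$ (possibly after swapping $c$ and $d$) is truly a triad of $N_0$ and that $\{b,c,d\}$ is truly a triangle: the mere existence of a $2$-circuit and a $2$-cocircuit does not force this, so the argument must invoke transitivity again---most likely by performing a further parallel or series extension of $N$ at a carefully chosen element (such as $c$ or $a$) and then using Lemmas~\ref{parallel-delete} and \ref{transit} to pin down the position of the replacement element in $E(N_0')$, forcing the existence of the required small circuits and cocircuits.
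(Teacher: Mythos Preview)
Your proposal has a genuine gap, and you correctly identify it yourself: having found an arbitrary $2$-circuit $\{a,b\}$ and an arbitrary disjoint $2$-cocircuit $\{c,d\}$ in $N_0$, there is no reason whatsoever for $\{a,b,c\}$ to be a triad or $\{b,c,d\}$ to be a triangle. For a concrete obstruction, take the cycle matroid of a $4$-cycle with one edge doubled: the doubled edge is a $2$-circuit, the two edges opposite it form a $2$-cocircuit, yet the matroid has no triangles at all, so no $4$-element fan of the required shape exists. Thus the step ``aim to construct the fan $(a,b,c,d)$'' is not merely unfinished; as written it cannot succeed, because the two small (co)circuits were produced independently and nothing links them. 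Your closing sentence gestures in the right direction, but it is precisely the heart of the argument, not a detail to be filled in later.

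The paper avoids this trap by never producing the $2$-circuit and $2$-cocircuit separately. It first obtains a $2$-cocircuit $\{a,b\}$ of $N_0$, then adds $c$ in parallel \emph{to $a$ itself}, so that $\{c,a,b\}$ is forced to be a triad of the extension; after deleting the element $s_1$ guaranteed by Lemma~\ref{parallel-delete}, one checks $s_1\notin\{a,b,c\}$ and hence $\{c,a,b\}$ survives as a triad in a matroid isomorphic to $N_0$. One then coextends by $d$ in series with $c$ and analyses the element $s_2$ that must be contracted: either $s_2=b$, which via Lemma~\ref{special_fan} forces $N_0\cong M(\mathcal{W}_2)$, or $s_2\notin\{a,b,c,d\}$, which yields a length-$4$ special fan $(d,c,a,b)$ in (an isomorphic copy of) $N_0$. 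The proof is then completed not by Lemma~\ref{special_fan} directly, but by a maximality argument: take a longest special fan in $N_0$ or $N_0^*$ and show, by one more parallel extension at its initial end, that transitivity forces a strictly longer special fan, a contradiction. In short, the paper \emph{builds} the fan one element at a time by always extending at an element already in the fan, rather than hoping two independently found pairs happen to interlock.
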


Assume that this assertion fails. Then, by Corollary~\ref{pd2}, $N_0$ has at least two, and hence at least three, elements. Take an element $e$ of $N_0$ and add an element $f$ in series with it. Let the resulting coextensions of $N_0$ and $N$ be $N'_0$ and $N'$, respectively. Then, by the transitivity property, $N'/a \cong N$ for some element $a$ of $E(N') - \{e,f\}$. Furthermore, by the dual of Lemma~\ref{transit}, $a \in E(N_0)$. We deduce that $N_0$ has a 2-cocircuit, say $\{a,b\}$. In $N_0$, add an element $c$ in parallel to $a$ to get $N_1$. Then, by transitivity and Lemma~\ref{transit}, there is an element $s_1$ of $E(N_1) - \{a,c\}$ such that $N_1 \ba s_1 \cong N_0$. Since $N_1\ba b$ has $\{a,c\}$ as a component, the component sizes of $N_1 \ba b$ and $N_0$ do not match, so 
 $s_1 \neq b$. 
Thus $s_1 \in E(N) - \{a,b,c\}$, so $N_1 \ba s_1$ has $\{c,a,b\}$ as a cocircuit. Next add an element $d$ to $N_1 \ba s_1$, putting it in series with $c$. Let the resulting matroid be $N_2$. By the dual of Lemma~\ref{transit}, there is an element $s_2$ of $E(N_2) - \{c,d\}$ such that $N_2/d \cong N_0$. 
Moreover, $s_2$ must be in a $2$-cocircuit of $N_2$, and $s_2$ is in a triangle in $N_2$ as $N_2 /s_2$ must have a $2$-circuit that is not present in $N_2$ since adding $d$ destroyed the 2-circuit $\{a,c\}$. 
Now $s_2 \neq a$ since $N_2 / a$ has $\{c,d\}$ as a component. 

Suppose $s_2 = b$. Then $b$ is in a $2$-cocircuit $\{b,e\}$ in $N_2$. %Moreover, $N_2 / b$ has a $2$-circuit that is not in $N_2$ since adding $d$ destroyed the $2$-circuit $\{a,c\}$. Thus 
Moreover, $N_2$ has a triangle $T$ containing $b$. By orthogonality, $T = \{b,e,a\}$. 
Then $(d,c,a,e)$ is a fan $X$ in $N_2 / b$ having $\{c,d\}$ as a cocircuit and $\{a,e\}$ as a circuit. By Lemma~\ref{special_fan}, $X = E(N_2 / b)$, so $N_0 \cong N_2 / b \cong M(\mathcal{W}_2)$; a contradiction.
%Let $X = \{a,b,c,d,e \}$. Then we have $r_{N_2}(X) = 3$ and $r^*_{N_2}(X) = 2$. Thus $\lambda_{N_2}(X) = 0$, so $X$ is in fact the whole ground set. It follows from contracting $b$ that $N \cong M(\mathcal{W}_2)$.

We now know that $s_2 \neq b$, so $s_2 \not\in \{a,b,c,d\}$. Thus $N_0$ has  $(d,c,a,b)$ as a special fan. 
%\begin{figure}
%\includegraphics[scale=.40]{fan.png}
%\caption{A $4$-element special fan in $N$.}
%\label{fan}
%\end{figure}
%Among all the special fans of $N$ and $N^*$, take a fan $(a_1, a_2, \ldots, a_k)$ of maximal length. Then there are two cases: $\{a_{k-2}, a_{k-1}, a_k\}$ is either a triangle or a triad. In either case, add $a_0$ in parallel with $a_1$ to make $N_3$. Now $N_3 \ba z \cong N$ for some element $z$ where $z \notin \{a_0, a_1\}$. If $\{a_{k-2}, a_{k-1}, a_k\}$ is a triad, then $z$ cannot be in the maximal special fan. If $\{a_{k-2}, a_{k-1}, a_k\}$ is a triangle, then either
%\begin{enumerate}
%\item $z$ is not in the fan and is not parallel to the last element of the fan; or
%\item $z$ and $a_k$ are in the same nontrivial parallel class.
%\end{enumerate} 
Among all the special fans of $N_0$ and $N_0^*$, take one, $(a_1, a_2, \ldots, a_k)$, with the maximum number of elements. Then $k \geq 4$. 
First assume  $\{a_{k-2}, a_{k-1}, a_k\}$ is a triad. Suppose $\{a_{k-1}, a_k\}$ is a $2$-circuit of $N_0$.  Then, by Lemma~\ref{special_fan}, the special fan is the whole component $N_0$.  As $N_0 \not \cong M({\mathcal W}_2)$, we see that $k \ge 6$. 
Add an element $f$ in parallel to   $a_3$ to form a new matroid $N_0'$. Then $\{a_1, a_3\}$ is in an $N_0$-minor of $N_0'$, and so is $\{a_1, f\}$. By the  transitivity property, $N_0'$ has  $\{a_3, f\}$  in an $N_0$-minor. Since $N_0'$ has $\{a_3,f\}$ and $\{a_{k-1},a_k\}$ as its only 2-circuits, while $N_0$ has a single 2-circuit, we deduce that $N_0'\ba a_k \cong N_0$. But every element of $N_0$ is in a cocircuit of size at most three, yet $f$ is in no such cocircuit of $N_0'\ba a_k$; a contradiction. 

It remains to deal with the cases when, in $N_0$, either $\{a_{k-2}, a_{k-1}, a_k\}$ is a triad and $\{a_{k-1}, a_k\}$ is not a circuit, or $\{a_{k-2}, a_{k-1}, a_k\}$ is a triangle. In these cases, add $a_0$ in parallel with $a_1$ to produce $N_3$. To obtain an $N_0$-minor of $N_3$ using $\{a_0, a_1\}$,   we must delete an element $z$ of $N_3$ that belongs to a $2$-circuit. Now $z$ is not in $\{a_2, a_3, \ldots, a_k\}$ as  none of these elements is  in a $2$-circuit, so $N_3 \ba z$ is isomorphic to $N_0$ and has $(a_0, a_1, \ldots, a_k)$ as a special fan. This contradicts our assumption that a special fan    in $N_0$ or $N_0^*$ has at most $k$ elements. We conclude that \ref{biggest} holds.

\begin{sublemma}
\label{noone}
$N$  has no single-element component. 
\end{sublemma}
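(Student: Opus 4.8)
The plan is to suppose, for contradiction, that $N$ has a single-element component, i.e.\ a loop or a coloop, and to derive a contradiction using the transitivity property together with Sublemma~\ref{biggest}, which tells us that the largest component $N_0$ of $N$ is isomorphic to $U_{1,2}$ or $M(\mathcal{W}_2)$. By passing to the dual (allowed, since $N$ has the transitivity property iff $N^*$ does), we may assume $N$ has a coloop $c$. First I would rule out the degenerate possibilities: by Sublemma~\ref{sublemma}, $N$ is not a direct sum of loops and coloops, so $N$ does genuinely have a component of size at least two, and indeed $N_0 \in \{U_{1,2}, M(\mathcal{W}_2)\}$ by~\ref{biggest}.

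The core idea is to build a matroid $M$ from $N$ by ``absorbing'' the coloop into a larger structure so that transitivity forces an $N$-minor whose component sizes cannot match those of $N$. Concretely, write $N = N_0 \oplus N_1 \oplus \cdots$, let $c$ be a coloop of $N$ (a single-element component), and form $M$ by taking $N$ and adding a new element $c'$ in series with $c$, so that $\{c, c'\}$ becomes a $2$-cocircuit that is a component of $M$; also choose an element $e$ in $N_0$ (a non-coloop, since $N_0$ has rank at least one and size at least two) and, separately, glue on material so that there is an $N$-minor of $M$ using $\{c, e\}$ and an $N$-minor using $\{c', e\}$. Since both $c$ and $c'$ are coloops of $M$ lying in distinct one-element components together with $e$ in $N_0$, one gets $N$-minors using $\{c,e\}$ and using $\{e,c'\}$; transitivity then yields an $N$-minor using $\{c,c'\}$. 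But in $M$ the only way two elements of the $2$-cocircuit component $\{c,c'\}$ can lie in an $N$-minor together is if that $N$-minor contains a coextension collapsing $\{c,c'\}$, and counting shows the resulting minor would have one fewer coloop (or one more element in a size-$\geq 2$ component) than $N$ — a contradiction. The precise bookkeeping mirrors the ``component sizes do not match'' arguments already used in the proofs of Theorem~\ref{minor_induction} and Sublemma~\ref{biggest}: deleting or contracting to realize $N'\backslash g \cong N$ or $N'/g \cong N$ forces $g$ into a specific component, and the arithmetic of component sizes is violated.

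The main obstacle I expect is arranging the auxiliary matroid $M$ so that $\{c,e\}$ and $\{e,c'\}$ each genuinely occur in an $N$-minor while $\{c,c'\}$ cannot — the construction must be tight enough that transitivity bites. The cleanest route is probably to take $M = N' \oplus M'$ in the style of Sublemma~\ref{m1n}: let $M'$ be a connected ``hub'' (a parallel connection of copies of the size-$\geq 2$ components, or of $N_0$ with itself) that has an $N_i$-minor through any chosen element, attach the coloop $c$ and a parallel copy to it, and let $N'$ be a disjoint copy of $N$; then every pair lies in an $N$-minor except possibly a pair inside a newly created small component, and that exception is exactly what produces the contradiction when we try to shrink $M$ down to $|E(N)|+1$ elements via the hypothesis. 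Once $M$ is set up, verifying $N$-connectivity is routine case analysis on where the two chosen elements lie, and the final contradiction is a one-line count comparing the number of single-element components of the reduced matroid with that of $N$.
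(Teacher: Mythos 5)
Your overall strategy---use transitivity to force an $N$-minor through a newly created small cocircuit containing the old coloop, then contradict by comparing component structures---is the right kind of idea, but the step you leave unspecified is exactly where the argument breaks. If you replace the coloop $a$ by a two-element component $\{c,c'\}$ (necessarily a $U_{1,2}$, and note your own description is inconsistent: $\{c,c'\}$ cannot simultaneously be a $2$-cocircuit component and two coloops in distinct one-element components), then indeed $M\ba c'\cong N$ and $M\ba c\cong N$, so transitivity yields an $N$-minor of $M$ using $\{c,c'\}$, which must be $M\ba g$ or $M/g$ for a single $g\notin\{c,c'\}$. But this need not be absurd: take $N=U_{1,2}\oplus U_{1,1}$, a case fully compatible with \ref{biggest} (which only restricts the \emph{largest} component). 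Then $M=U_{1,2}\oplus U_{1,2}$, and deleting one element of the old $U_{1,2}$ component gives $M\ba g\cong N$ using $\{c,c'\}$---no contradiction. Your ``one-line count'' fails whenever $N$ has a $U_{1,2}$ component, because removing an element there manufactures exactly the missing coloop; and since \ref{biggest} forces the largest component to be $U_{1,2}$ or $M(\mathcal{W}_2)$, this is not a fringe case but a central one. So the detached-component gadget is genuinely the wrong construction, not just an omitted detail.

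The paper avoids this by attaching the coloop to the largest component rather than isolating it: add a single element $b$ so that $\{a,b,c\}$ is a triangle and $\{a,b\}$ is a cocircuit, where $c$ lies in a $2$-cocircuit of $N_0$. Then $N'\ba b\cong N$ and $N'\ba a\cong N$, so transitivity gives $N'\ba g\cong N$ with $g\notin\{a,b\}$; because the component of $N'$ containing $\{a,b,c\}$ is now strictly larger than every component of $N$, the element $g$ is pinned inside that component and must lie in a $2$-cocircuit of it, and the explicit structure of $U_{1,2}$ or $M(\mathcal{W}_2)$ with this triangle attached shows no such $g$ exists. Two further problems with your write-up: Sublemma~\ref{sublemma} is proved under the hypothesis of Theorem~\ref{minor_induction}, not the transitivity property, so you cannot invoke it here (Corollary~\ref{pd2}, or \ref{biggest} itself, gives what you want); and your fallback of ``shrinking $M$ down to $|E(N)|+1$ elements via the hypothesis'' likewise imports the deletion/contraction hypothesis of Theorem~\ref{minor_induction}, which is not available in the proof of Theorem~\ref{transitivity3}.
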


To see this, let $N_0$ be a largest component of $N$. By \ref{biggest}, $N_0$ is isomorphic to $U_{1,2}$ or $M({\mathcal W}_2)$.  Assume that $N$ has a single-element component $N_1$ with $E(N_1) = \{a\}$. By replacing $N$ by its dual if necessary, we may assume that $a$ is a coloop of $N$. Let $c$ be an element  that is in a 2-cocircuit of $N_0$. Now let $N'$ be obtained from $N$ by adding an element $b$ so that $N'$ has $\{a,b,c\}$ as a triangle and $\{a,b\}$ as a cocircuit. Then, by the transitivity property, $N'\ba g \cong N$ for some element $g$ not in $\{a,b\}$. By the choice of $N_0$, we deduce that $g$ must be in the same component $N_0'$ of $N'$ as $\{a,b,c\}$. Moreover, $g$ must be in a 2-cocircuit of $N'_0$. But $N'_0$ contains no such element. Hence \ref{noone} holds.
%last change

\begin{sublemma}
\label{iftwo}
$N$  has a single component of maximum size. 
\end{sublemma}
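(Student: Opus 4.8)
Suppose for a contradiction that $N$ has the transitivity property, $N \not\cong U_{1,2}$, $N \not\cong M(\mathcal{W}_2)$, and $N$ has two components $N_0, N_1$ of maximum size. The plan is to produce a matroid $M$ and distinguished elements $e,f,g$ of $M$ for which $M$ has an $N$-minor using $\{e,f\}$ and an $N$-minor using $\{f,g\}$ but no $N$-minor using $\{e,g\}$; this contradicts the transitivity property. By \ref{biggest}, a largest component of $N$ is isomorphic to $U_{1,2}$ or to $M(\mathcal{W}_2)$, and since $N_0$ and $N_1$ have the same number of elements, both are isomorphic to a single matroid $C$ with $C\cong U_{1,2}$ or $C\cong M(\mathcal{W}_2)$. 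Combining this with \ref{noone}, every component of $N$ has between two and $|E(C)|$ elements, so $N\cong C\oplus C\oplus R$ where each component of $R$ has between two and $|E(C)|$ elements; in particular, when $C\cong U_{1,2}$ this forces $R$ to have no elements, so $N\cong U_{1,2}^{\oplus k}$ for some $k\ge 2$.

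The heart of the argument is a rigid ``gadget'' matroid whose $(C\oplus C)$-minors are tightly controlled. Consider first $C\cong U_{1,2}$, so $N\cong U_{1,2}^{\oplus k}$ with $k\ge 2$. Let $G$ be the simple rank-$3$ matroid on the nine-element set $\{e,f,g,a,b,c,d,p_1,p_2\}$ whose only rank-$2$ flats with more than two elements are the four lines $\{e,a,p_1\}$, $\{f,b,p_1\}$, $\{f,c,p_2\}$, and $\{g,d,p_2\}$; this is a matroid because these lines pairwise meet in at most one point. Contracting $p_1$ collapses $\{e,a,p_1\}$ and $\{f,b,p_1\}$ to parallel classes $\{e,a\}$ and $\{f,b\}$, so $(G/p_1)|\{e,a,f,b\}\cong U_{1,2}\oplus U_{1,2}$ uses $\{e,f\}$; symmetrically $(G/p_2)|\{f,c,g,d\}\cong U_{1,2}\oplus U_{1,2}$ uses $\{f,g\}$. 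The key point is that $G$ has no $(U_{1,2}\oplus U_{1,2})$-minor using $\{e,g\}$: since $G$ is simple of rank $3$, any such minor is obtained either by restricting to a rank-$2$ flat (impossible, since no line of $G$ contains a parallel pair) or by contracting a single point $q$ and then restricting $G/q$ to two parallel classes, and a parallel class of $G/q$ of size at least two arises from a line of size at least three through $q$; hence $q$ lies on two such lines, forcing $q\in\{p_1,p_2,f\}$, and one checks directly that $G/p_1$ and $G/f$ place $g$ in no parallel class of size at least two, while $G/p_2$ and $G/f$ place $e$ in none. Now set $M=G\oplus U_{1,2}^{\oplus(k-2)}$. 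Deleting or contracting any element of one of the $k-2$ trailing summands produces a coloop, a loop, or a lost summand, none of which occurs in $U_{1,2}^{\oplus k}$; hence every $N$-minor of $M$ is the direct sum of those $k-2$ intact summands with a $(U_{1,2}\oplus U_{1,2})$-minor of $G$. Thus $M$ has $N$-minors using $\{e,f\}$ and $\{f,g\}$ but none using $\{e,g\}$, as required.

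For $C\cong M(\mathcal{W}_2)$ one runs the same scheme with a larger gadget $H$ obtained by ``inflating'' $G$: each parallel class that one reads off after contracting $p_1$ or $p_2$ is replaced by a copy of the configuration ``$U_{2,3}$ with one point doubled'' (which is $M(\mathcal{W}_2)$) attached at the corresponding hub, so that the controlled contractions yield $M(\mathcal{W}_2)\oplus M(\mathcal{W}_2)$ rather than $U_{1,2}\oplus U_{1,2}$; one then takes $M=H\oplus R$ and repeats the bookkeeping, using that each component of $R$ is connected with at least two elements (hence has a circuit of size at least two) and that reassembling $N$ from minors of $H$ and $R$ forces $H$ to supply exactly the two distinguished $M(\mathcal{W}_2)$-summands. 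I expect this last step to be the main obstacle. The case $C\cong U_{1,2}$ reduces to a clean finite incidence check; the case $C\cong M(\mathcal{W}_2)$ needs the same idea carried out with the less symmetric, non-uniform matroid $M(\mathcal{W}_2)$, and the delicate point is verifying that when $R$ itself has components of large size the direct-sum bookkeeping still confines $e$ and $g$ to disjoint regions of the gadget that cannot be realized simultaneously, so that no rerouting through the components of $R$ ever produces an $N$-minor using $\{e,g\}$.
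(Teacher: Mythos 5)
Your overall strategy is sound in outline (contradict transitivity with a constructed matroid and three distinguished elements), and your treatment of the case where the largest components are $U_{1,2}$ is essentially complete: the nine-point rank-$3$ gadget $G$ with the four specified lines is a legitimate matroid, the contractions $G/p_1$ and $G/p_2$ do yield $(U_{1,2}\oplus U_{1,2})$-minors using $\{e,f\}$ and $\{f,g\}$, and the incidence check ruling out such a minor using $\{e,g\}$ is correct. Two caveats there: your bookkeeping for $M=G\oplus U_{1,2}^{\oplus(k-2)}$ asserts that a ``lost summand'' cannot occur in an $N$-minor, but losing a trailing $U_{1,2}$ is not by itself a contradiction --- it is ruled out only because $G$, being simple of rank $3$, has no $U_{1,2}^{\oplus 3}$-minor, so $G$ cannot compensate for a vanished summand; this is a one-line fix but it is missing. (Also the parenthetical ``this forces $R$ to have no elements'' is wrong as stated, though your conclusion $N\cong U_{1,2}^{\oplus k}$ is right.)

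The genuine gap is the case $C\cong M(\mathcal{W}_2)$, which you only sketch and which you yourself flag as ``the main obstacle.'' The inflated gadget $H$ is never defined as a matroid, so neither its claimed property that controlled contractions yield $M(\mathcal{W}_2)\oplus M(\mathcal{W}_2)$ nor, more importantly, the non-existence of an $N$-minor of $H\oplus R$ using $\{e,g\}$ is established. This last point is exactly where the difficulty lies: the components of $R$ can be $U_{1,2}$, $U_{1,3}$, $U_{2,3}$, or $M(\mathcal{W}_2)$, and each can contribute proper minors ($M(\mathcal{W}_2)$ has $U_{2,3}$-, $U_{1,3}$-, and $U_{1,2}$-type minors, $U_{2,3}$ has a $U_{1,2}$-minor, etc.), so the components of an $N$-minor of $H\oplus R$ can be redistributed between $H$ and the components of $R$ in many ways; ruling all of these out is the substance of the claim, and it is not done. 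For comparison, the paper avoids this case split and the unbounded bookkeeping entirely: it exploits only the fact that each largest component $N_i$ has a $2$-circuit, coextends by an element $c_i$ in series, splices $N_0'$ and $N_1'$ onto a triangle $\{c_0,z,c_1\}$ by parallel connection, truncates to get a connected $N_{01}$ with $r(N_{01})=r(N_0)+r(N_1)+1$, and replaces $N_0\oplus N_1$ by $N_{01}$ inside $N$; since the resulting $N'$ has rank $r(N)+1$, any $N$-minor using $\{c_0,c_1\}$ must be $N'/e\backslash f,g$ for a single contracted element $e$ whose contraction creates two disjoint new $2$-circuits, and a short analysis of the candidates for $e$ yields the contradiction uniformly for both $U_{1,2}$ and $M(\mathcal{W}_2)$. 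As it stands, your argument proves the sublemma only when the largest components are $U_{1,2}$, so the statement is not yet proved.
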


Assume that this fails, and let $N_0$ and $N_1$ be components of $N$ of maximum size. 
Let $\{a_i,b_i\}$ be a 2-circuit of $N_i$. Let $N'_i$ be obtained from $N_i$ by adding $c_i$ in series with $b_i$. Now take a copy of $U_{2,3}$ with ground set $\{c_0,z,c_1\}$ and adjoin $N'_0$ and $N'_1$ via parallel connection across $c_0$ and $c_1$, respectively. Truncate the resulting matroid to get $N_{01}$. Then $r(N_{01}) = r(N_0) + r(N_1) + 1$. Let $N'$ be obtained from $N$ by replacing  $N_0 \oplus N_1$ by $N_{01}$. 
Now $N_{01} /c_0$ and $N_{01} /c_1$ have $(N_0 \oplus N_1)$-minors using $\{z,c_1\}$ and $\{z,c_0\}$, respectively. Hence $N'/c_0$ and $N'/c_1$ have $N$-minors using $\{z,c_1\}$ and $\{z,c_0\}$. Thus, by transitivity, $N'$ has an $N$-minor $\tilde{N}$ using $\{c_0,c_1\}$. As $r(N') = r(N) + 1$, there are elements $e$, $f$, and $g$ of $E(N') - \{c_0,c_1\}$ such that $\tilde{N} = N' / e \ba f,g$. Now $N'/e$   must have two disjoint 2-circuits  that are not in $N'$. Thus $e \in E(N_{01})$. As $e \not\in \{c_0,c_1\}$, it follows that $N_0 \cong M({\mathcal W}_2) \cong N_1$ and, by symmetry, we may assume that $e = a_0$. But $N_{01}/a_0$ does not have an $(M({\mathcal W}_2) \oplus M({\mathcal W}_2))$-minor. Thus \ref{iftwo} holds.

By \ref{biggest} and \ref{iftwo}, $N$ has a single largest component $N_0$ and it is isomorphic to $M({\mathcal W}_2)$. As $N$ is disconnected, we may assume by duality that $N$ has a component  $N_1$ that is isomorphic to $U_{1,k}$ for some $k$ in $\{2,3\}$.  Now take a copy of $U_{2,3}$ with ground set $\{c_0,z,c_1\}$ and adjoin copies of $U_{2,k+1}$ via parallel connection across $c_0$ and $c_1$, letting the resulting matroid be $N_{01}$. Replacing $N_0 \oplus N_1$ by $N_{01}$ in $N$ to give $N'$, we see that $r(N') = r(N) + 1$. Moreover, $N'/c_0$ and $N'/c_1$ have $N$-minors using $\{c_1,z\}$ and $\{c_0,z\}$, respectively. But  $c_0$ and $c_1$ are the only elements $e$ of $N'$ such that $N'/e$ has    two disjoint 2-circuits  that are not in $N'$. Thus $N'$ has no $N$-minor using $\{c_0,c_1\}$. This contradiction completes the proof of the theorem.
\end{proof}

%Let $N_{01}$ be constructed from a $4$-element independent set $\{1,2,3,4\}$ by freely adding $k$ points on each of the lines $\{1,2\}$ and $\{3,4\}$, and adding a single point $z$ on the line $\{2,3\}$. Replacing $N_0 \oplus N_1$ by $N_{01}$ in $N$ to give $N'$, we see that $N'/2$ and $N'/3$ have $N$-minors using $\{3,z\}$ and $\{2,z\}$, respectively. But, as before, $N'$ has no $N$-minor using $\{2,3\}$. This contradiction completes the proof of the theorem.
%\end{proof}

We conclude this section by proving Corollary~\ref{combine}, which demonstrates how two of the basic properties of matroid connectivity are enough to characterize it. 

\begin{proof}[Proof of Corollary~\ref{combine}]
Assume that $N \not\cong U_{1,2}$. Then, by Theorem~\ref{minor_induction} and duality, we may assume that $N \cong U_{2,2}$. But $U_{2,2}$ does not have the transitivity property as the matroid $U_{1,2} \oplus U_{1,1}$ shows.
\end{proof}

\section{Three-element sets}
\label{larger}

The notion of $N$-connectivity defined here relies on sets of two elements. Sets of size three have already been an object of some study.
%Oxley \cite{oxl1} proved that in a three-connected matroid with rank and corank at least three, every triple of elements is in a minor isomorphic to $U_3,6$, $P_6$, $Q_6$, $W^3$, or $M(K_4)$.
Seymour asked whether every $3$-element set in a $4$-connected non-binary matroid belongs to a $U_{2,4}$-minor but Kahn~\cite{kahn} and Coullard~\cite{cou} answered this question negatively. Seymour \cite{ps2} characterized the internally $4$-connected binary matroids that are $U_{2,3}$-connected, but the problem of completely characterizing  when every triple of elements in an internally $4$-connected   matroid is in a $U_{2,3}$-minor remains  open  \cite[Problem 15.9.7]{oxl2}.

For a  $3$-connected binary matroid $M$ having rank and corank at least three, Theorem~\ref{rank and corank 3} shows  that every triple of elements of  $M$ is in an $M(K_4)$-minor. The next result extends this theorem to connected binary matroids. As the proof, which is based on Lemma~\ref{replace_basepoint2}, is so similar to those appearing earlier, we omit the details.

\begin{proposition}
Let $M$ be a connected binary matroid. For every triple $\{x,y,z\} \subseteq E(M)$, there is an $M(K_4)$-minor using $\{x,y,z\}$ if and only if every matroid in the canonical tree decomposition of $M$ has rank and corank at least $3$.
\end{proposition}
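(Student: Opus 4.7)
The plan is to follow the template of the proofs of Theorem~\ref{general} and the $U_{2,4}$-connectivity theorem in Section~\ref{3-connected matroids}, adapting to triples of elements in the binary setting, with Lemma~\ref{replace_basepoint2} as the central tool. Two preliminary observations: every vertex of the canonical tree decomposition $T$ of a binary matroid $M$ is itself binary, and since $U_{2,n}$ is non-binary for $n \ge 4$, every 3-connected binary matroid on at least four elements has rank and corank at least~$3$. Hence the vertices of $T$ with rank or corank less than~$3$ are precisely the circuit and cocircuit vertices.

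For sufficiency, fix a triple $\{x,y,z\} \subseteq E(M)$. The minimal subtree of $T$ meeting the vertices of $T$ that contain $x$, $y$, and $z$ is a single vertex, a path, or a tripod, and I choose $M_0$ to be that common vertex, the middle vertex of the path, or the branch vertex of the tripod, respectively. With this choice, each edge of $T$ incident to $M_0$ has at most one of $x$, $y$, $z$ on its far side, so repeated application of Lemma~\ref{replace_basepoint} produces a specially relabeled $M_0$-minor $M_0(y_1,\dots,y_d)$ using $\{x,y,z\}$ (take $y_i$ to be the unique far-side member of the triple on edge $p_i$, or arbitrary if none). By the hypothesis and the preliminary observation, $M_0$ is 3-connected binary with rank and corank at least~$3$; Theorem~\ref{rank and corank 3} then gives a minor of $M_0$ using $\{x,y,z\}$ isomorphic to a member of $\{U_{3,6}, P_6, Q_6, \mathcal{W}^3, M(K_4)\}$, and only $M(K_4)$ is binary.

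For necessity, assume every triple of $E(M)$ lies in an $M(K_4)$-minor and suppose for contradiction that some vertex $M_0$ of $T$ has rank or corank less than~$3$. By duality I may take $M_0 \cong U_{1,n}$ with $n \ge 3$; write $m = |E(M_0) \cap E(M)|$ and $d = \deg_T(M_0)$, so $m + d = n$. Every subtree beyond a neighbor of $M_0$ contains a leaf vertex of $T$, and hence a non-basepoint in $E(M)$, so I can select a triple $\{x,y,z\} \subseteq E(M)$ consisting of $\min(m,3)$ non-basepoints of $M_0$ together with one element from the subtree beyond each of $3-\min(m,3)$ distinct neighbors of $M_0$. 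By construction, at most one of $x$, $y$, $z$ lies on the far side of any edge of $T$ incident to $M_0$. Hence for any $M' \ne M_0$, if $p$ is the edge of $T$ incident to $M_0$ that lies on the path from $M_0$ to $M'$, then at least two of $\{x,y,z\}$ lie on the side of $p$ opposite $M'$, violating the requirement in Lemma~\ref{replace_basepoint2} that at most one of the six elements of an $M(K_4)$-minor can lie on the side of $p$ opposite $M'$. So the lemma forces $M' = M_0$, and the corresponding specially relabeled $M_0$-minor must contain $M(K_4)$; but every such minor is isomorphic to $M_0 \cong U_{1,n}$, which has rank~$1$ and contains no $M(K_4)$-minor, a contradiction.

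The main obstacle is executing the triple-selection argument in the necessity direction uniformly across configurations of $m$ and $d$, and confirming that Lemma~\ref{replace_basepoint2} then pins down $M' = M_0$. The sufficiency direction is essentially a Steiner-point argument together with the observation that, among the matroids listed in Theorem~\ref{rank and corank 3}, only $M(K_4)$ is binary.
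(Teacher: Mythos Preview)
Your proof is correct and follows essentially the same approach as the paper's own proof: both directions hinge on Lemma~\ref{replace_basepoint2}, with Theorem~\ref{rank and corank 3} (restricted to the binary case) supplying the $M(K_4)$-minor in the sufficiency direction. Your treatment is slightly more uniform than the paper's case analysis (median vertex in sufficiency, the $\min(m,3)$ selection in necessity); the only imprecision is the phrase ``middle vertex of the path,'' which in the sub-case where two of $x,y,z$ share a vertex should really be that shared endpoint, but the defining property you state immediately afterward makes the intended choice clear.
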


\end{document}